\providecommand{\U}[1]{\protect\rule{.1in}{.1in}}
\newtheorem{lemma}{Lemma}
\newtheorem{corollary}{Corollary}
\newtheorem{proposition}{Proposition}
\newtheorem{theorem}{Theorem}
\def\eqn {\begin{equation}}
\def\eeqn {\end{equation}}
\begin{document}
\title{Large Time Behavior of Exchange-Driven Growth}
\date{April 2019}
\author{Emre Esent\"{u}rk}
\address{University of Warwick, Mathematics Institute, UK}
\email{E.esenturk.1@warwick.ac.uk}
\author{Juan Velazquez}
\address{Hausdorff Centre for Mathematics}
\email{velazquez@iam.uni-bonn.de}
\thanks{Corresponding author email: E.esenturk.1@warwick.ac.uk}
\keywords{Exchange-driven growth, Equilibria, Asymptotics}

\begin{abstract}
Exchange-driven growth (EDG) is a process in which pairs of clusters interact
by exchanging single unit with a rate given by a kernel $K(j,k)$. Despite EDG
model's common use in the applied sciences, its rigorous mathematical
treatment is very recent. In this article we study the large time behaviour of
EDG equations. We show two sets of results depending on the properties of the
kernel $(i)$ $K(j,k)=b_{j}a_{k}$ and $(ii)$ $K(j,k)=ja_{k}+b_{j}%
+\varepsilon\beta_{j}\alpha_{k}$. For type I kernels, under the detailed
balance assumption, we show that the system admits equilibrium solutions up to
a critical mass $\rho_{s}$ above which there is no equilibrium. We prove that
if the system has an initial mass above $\rho_{s}$ then the solutions converge
to critical equilibrium distribution in a weak sense while strong convergence
can be shown when initial mass is below $\rho_{s}$. For type II kernels, we
make no assumption of detailed balance and equilibrium is obtained via a
contraction property. We provide two separate results depending on the
monotonicity of the kernel or smallness of the total mass. For the first case
we show exponential convergence in the number of clusters norm and for the
second we prove exponential convergence in the total mass norm.

\end{abstract}
\maketitle

\section{Introduction}

Exchange-driven growth (EDG) is a model for non-equilibrium cluster growth in
which pairs of clusters interact by exchanging a single unit of mass (monomer)
\cite{Naim},\cite{Naim-book}. In the recent years EDG has been used to model
several natural and social phenomena such as migration \cite{Ke2}, population
dynamics \cite{Leyvraz2} and wealth exchange \cite{Isp}. EDG is also important
mathematically for multiple reasons. Firstly, it is a model of intermediate
complexity between the classical Becker-Doring (BD) model \cite{Wattis},
\cite{Barb}, where the dynamics are well understood, and the Smoluchowski
coagulation model, where the existing mathematical questions are much tougher.
Secondly, EDG arises as the mean field limit of a class of interacting
particle systems (IPS) that includes models of non-equilibrium statistical
physics including zero-range processes \cite{Stef1}, \cite{Stef2},
\cite{Godrec}, \cite{Stef3}, \cite{Godrec2}, \cite{Beltran}, \cite{Stef4},
\cite{Waclaw}, \cite{Colm}, that have been intensively studied for a range of
condensation phenomena that they exhibit. Despite its importance, rigorous
results on the properties and behavior of the corresponding equations
(existence, uniqueness, asymptotic behavior etc.) are scarce and have been
obtained only very recently \cite{EE}, \cite{Sch}. It is the purpose of this
article to continue the mathematical study of the EDG systems focusing on the
large time asymptotic properties of solutions with explicit convergence rates
where possible.

In EDG, the mathematical description of the mass exchange systems is given at
the mesoscopic level and one studies the mean field rate equations (hereafter
referred as EDG equations) ignoring fluctuations at the particle level. The
main mathematical object of study is $c_{j}(t)$, the cluster size
distribution, describing the volume fraction of the system which is occupied
by clusters of size $j\geq1$, where $j=0$ corresponds to the empty (available)
volume fraction not occupied by any cluster. Symbolically, the exchange
process can be described in the following way. If $<j>,$ $<k>$ denote the
non-zero clusters of sizes $j,k>0,$ then the rule of interaction is
\[
<j>\oplus<k>\ \rightarrow\ <j\pm1>\oplus<k\mp1>.
\]
If, one of the clusters is a zero-cluster ($0$-cluster)$,$ then the rule is
given by
\[
<j>\oplus<0>\ \rightarrow\ <j-1>\oplus<1>.
\]

The rate of exchange from a $j-$cluster to a $k-$cluster is given by $K(j,k)$
which is not necessarily symmetric$.$ This is an important difference between
the EDG and coagulation (Smoluchowski) models. Mathematically, the infinite
network of interactions are represented as system of nonlinear ODEs%

\begin{equation}
\dot{c}_{0}=c_{1}\sum_{k=0}^{\infty}K(1,k)c_{k}-c_{0}\sum_{k=1}^{\infty
}K(k,0)c_{k}\text{,} \label{0-infode}%
\end{equation}

\begin{align}
\text{ }\dot{c}_{j}  &  =c_{j+1}\sum_{k=0}^{\infty}K(j+1,k)c_{k}-c_{j}%
\sum_{k=0}^{\infty}K(j,k)c_{k}\label{infode}\\
&  -c_{j}\sum_{k=1}^{\infty}K(k,j)c_{k}+c_{j-1}\sum_{k=1}^{\infty
}K(k,j-1)c_{k}\text{ ,\ }%
\end{align}%
\begin{equation}
c_{j}(0)=c_{j,0}\text{ \ \ \ }\{j=0,1,2,...\}. \label{infIC}%
\end{equation}
\bigskip

In \cite{EE} one of the authors provided the first mathematical investigation
of EDG equations giving the fundamental properties such as global existence,
uniqueness and non-existence. In particular, for general non-symmetric kernels
whose growth is bounded as $K(j,k)\leq Cjk$ (for large $j,k),$ unique
classical solutions were shown to exist globally. For symmetric kernels, it
was shown that the existence result can be generalized to kernels whose growth
rate is lying in the range $K(j,k)\leq C(j^{\mu}k^{v}+j^{\nu}k^{\mu}),$ with
$\mu,\nu\leq2,$ $\mu+v\leq3$, a fact was first discovered by physicists based
on scaling arguments \cite{Naim}. Uniqueness of solutions was obtained under
additional boundedness assumptions on moments. Recently this result was
extended in \cite{Sch} without requiring the moment assumption. On the other
hand, for fast growing kernels it was shown that the solutions cannot exist
provided that the initial distribution has sufficiently fat tails.

There exists a body of literature for applications of EDG mechanism in the
physical and social sciences. In these classical treatments exchange
interactions are only defined among non-zero clusters and $0$-clusters have no
use or meaning. One of the key aspects of the current formulation of the EDG
system given by (\ref{0-infode})-(\ref{infIC}) is the inclusion of the
$0-$clusters (or available volume) representing the non-zero volume fraction
accessible to particles. Hence, in this description total volume density,
i.e., $\sum_{j\geq0}c_{j}=\eta$ becomes a conserved quantity independently of
the total mass density (denoted by $\rho$ hereafter).

The presence of accessible (available) volume influence the properties of the
whole system most distinctly by allowing the particles to detach from non-zero
clusters and re-occupy the available (free) volume which mathematically reads
as $K(j,0)>0$. Effectively, this provides a "fresh" source of $1-$clusters to
the system. This behaviour was first demonstrated numerically in \cite{EE2},
where it was observed that the seemingly innocuous change in the kernel
($K(j,0)>0)$ fundamentally alters the dynamical behavior, driving the system,
towards a unique equilibrium (BD-like) instead of indefinite growth where the
cluster densities eventually vanish (Smoluchowski-like when $K(j,0)=0$). For a
class of kernels this observation was recenty proven in \cite{Sch}.

In this article we study the large time behavior of the exchange-driven system
concentrating on the cases where the exchange interaction rate (i.e., the
kernel $K)$ is separable as follows
\[
(I)\text{ }K(j,k)=b_{j}a_{k}%
\]%
\[
(II)\ K(j,k)=ja_{k}+b_{j}+\varepsilon\beta_{j}\alpha_{k}%
\]
where the $b_{j}$ (and $\beta_{j})$ terms can be interpreted as "export" rate
and $a_{j}\,($and$\ \alpha_{j})$ terms as the "import" rate of particles from
a cluster and $II,$ $\varepsilon>0$ is a small parameter.

For the type $I$ separable kernels we show that, under a crucial balance
assumption (of density fluxes), the equilibrium cluster densities take the
form $c_{j}=\frac{Q_{j}z(\rho,\eta)^{j}}{\sum_{j}Q_{j}z(\rho,\eta)^{j}}$ where
$z(\rho,\eta)$ is a solution of a nonlinear equation and $Q_{j}=%
{\textstyle\prod_{k=1}^{k=j}}
\frac{a_{k-1}}{b_{k}}$ are combinatorial factors. The explicit form of the
equilibria becomes important as it serves useful in the analysis of behavior
of solutions. In particular, the feature that the equilibrium densities are
the minimizers of a certain functional (entropy) $V(c)=\sum c_{j}\ln
(\frac{c_{j}}{Q_{j}})-c_{j}$ on a chosen set
\[
X_{\rho,\eta}=\{(c)_{j=1}^{\infty}:c_{j}\geq0,\sum jc_{j}=\rho,\sum c_{j}%
=\eta\}
\]
enable us to use the well developed entropy dissipation methods for the large
time analysis. It is worth noting that for this type of kernel, equilibrium
solution is possible only for a finite range of initial mass $\rho_{i}$
satisfying $\rho_{i}\leq\rho_{c}$ (hereafter referred to as subcritical case)
where individual cluster densities can be explicitly obtained from a recursive
relation. If the $\rho_{i}>\rho_{c}$ there will be no equilibrium solutions
indicating a phase transition. For type $II$ separable kernels we do not make
any assumption on the structure of equilibrium (no detailed balance
assumption) and therefore no specific analysis of the forms will be made or
needed except for the existence of equilibrium. That we do not impose any
structural conditions on the equilibrium is one of the novelties in this paper.

The main goal of this article is to obtain rigorous results on the large time
behavior of the EDG system. Below we give a brief outline of arguments and
main findings. We provide two sets of results depending on the type of the kernel.

For type $I$ kernels, we prove qualitative convergence results with mild
assumptions on the kernel. In particular, we show that the time dependent
system (\ref{0-infode})-(\ref{infIC}) goes strongly to equilibrium if the
total mass is below a threshold value $\rho_{c}.$ Above this critical value, a
dynamic phase transition occurs and the excess initial mass $\rho_{i}-\rho
_{c}$ forms larger and larger clusters while the rest of the system approaches
to equilibrium weakly. This behavior is analogous to the simpler Becker-Doring
system whose dynamics has been well studied \cite{Ball}, \cite{Slem},
\cite{Carr}, \cite{Phil}, \cite{Fournier}, \cite{Niet}, \cite{Can1},
\cite{Pego}.

For the results, we first show that the under the assumptions of \cite{EE}
system (\ref{0-infode})-(\ref{infIC}) form a semi-group. Then one naturally
seeks a Lyapunov function which is decreasing in time and a suitable norm
where the positive orbit is relatively compact and the Lyapunov function is
continuous. Since mass is an invariant of the motion a first candidate for the
suitable norm is the space $X=\{(c)_{j=1}^{\infty}:\sum jc_{j}<\infty\}.$ The
downside of this natural norm is that the positive orbit is not always
compact. Quite similar to the classical case in BD equations using a weaker
topology comes useful and the desired compactness result can be obtained even
for the supercritical case. The remaining condition is then to satisfy the
continuity of the Lyapunov function in the chosen metric. It turns out that it
is not generally true for the "bare" form of the Lyapunov function but holds
for modified version%
\[
V_{z,y}(c)=V(c)-\ln z\sum jc_{j}-\ln y\sum c_{j}%
\]
Here, the invariance of the total mass and volume is of crucial importance for
preserving the monotonicity property of the new Lyapunov function. This
naturally extends the approach taken in \cite{Ball} where the only conserved
quantity was total mass. With this modification we can show that $V_{z,y}$ is
weak (defined more precisely later) continuous and the invariance principle
can be applied to prove the weak convergence of solutions. For the subcritical
case we enforce stronger conditions on the initial data to prove compactness
and use the invariance principle to show the strong convergence.

Our second set of results with type $II$ kernels on the large time behavior
concern the convergence to equilibrium solutions without detailed balance.
Both the existence of general equilibrium and the convergence to equilibrium
is a consequence of key contraction properties of solutions. We present two
different results of convergence depending on the assumptions on the
$a_{j},b_{j}$ functions. For each result we show that solutions converge to
the equilibrium exponentially fast.

The proof of rate of convergence relies on analyzing the evolution of two
non-negative quantities which measure the distance of a solution from another
solution (distribution) having the same mass. One then shows that this
"distance" shrinks in time (contraction property). To show the first
contraction property we assume the kernel satisfies certain monotonicity
conditions. With this, one can show that solutions approach to equilibrium
exponentially fast in the "weak" norm (same as in type $I$ kernels).
Alternatively, one can remove the monotonicity conditions on the kernel and
impose a small mass condition on the system. The second approach is along the
lines of \cite{Fournier}. Though more restrictive, with such an assumption one
can show that solutions converge to equilibrium exponentially fast in the
strong topology (same as in type $I$ kernels$)$.

Part of the results of this paper, namely those in Section 3, overlap with
some of the results in \cite{Sch} which were independently obtained. Actually
the results in \cite{Sch} cover a class of kernels wider than those considered
in Section 3 of this paper. Nevertheless, given that the proofs of the
convergence results are simpler and give a clear intuition about properties of
the kernels for the product kernels considered in Section 3 we decided to keep
them (see the discussion about "export" and "import" tendencies). On the other
hand, the analysis of the long time asymptotics for kernels of type $II$, for
which detailed balance is not satisfied, has not been considered to our
knowledge anywhere else. We consider this type of kernels in Section 4 of this
paper. In addition to providing the first in providing explicit rates of
convergence, the results in this article are also relevant as they illustrate
that the EDG system shows structural similarities to the BD system and
naturally generalizes it.

The organization of the rest of the paper is as follows. In Section 2 we
recall some of the basic results on the well posedness of the EDG system and
give important lemmas that will be used throughout. In Section 3, we study the
form of the equilibria with type $I$ kernels and define and analyze some
important functions that will form the basis of arguments to prove the
convergence to equilibrium (in weak and strong senses). In Section 4, we study
the EDG system with type $II$ kernels without the detailed balance assumption
and prove exponential convergence to equilibrium in weak and strong senses
with explicit rates.

\section{FUNDAMENTALS}

In this section we give the setting of the problem and provide some basic
facts which will be used in the subsequent analysis. For the sequence of
functions we are concerned the appropriate spaces are $X_{\mu}=\{x=(x_{j}),$
$x_{j}\in\mathbb{R};\left\Vert x\right\Vert _{\mu}<\infty\}.$ We equip the
space with the norm $\left\Vert x\right\Vert _{\mu}=\sum_{j=1}^{\infty}j^{\mu
}x_{j}$ where $\mu\geq0.$ Also, the cluster interaction kernel $K(\cdot
,\cdot):\mathbb{R}\times\mathbb{R}\rightarrow\lbrack0,\infty)$ is defined to
be non-negative throughout and set $K(0,j)\equiv0$ identically.

\textbf{Definition 1: }We say the system\ (\ref{0-infode})-(\ref{infIC}) has a
solution iff

$(i)$ $c_{j}(t)$ $:[0,\infty)\rightarrow\lbrack0,\infty)$ is continuous and
$\sup_{t\in\lbrack0,\infty)}c_{j}(t)<\infty$

$(ii)$ $\int_{0}^{t}\sum_{k=0}^{\infty}K(j,k)c_{k}ds<\infty,$ $\int_{0}%
^{t}\sum_{k=1}^{\infty}K(k,j)c_{k}ds<\infty$ for all $t\in\lbrack0,T)$
($T\leq\infty)$

$(iii)\ c_{j}(t)=c_{j}(0)+\int_{0}^{t}\left(  c_{j+1}\sum_{k=0}^{\infty
}K(j+1,k)c_{k}-c_{j}\sum_{k=0}^{\infty}K(j,k)c_{k}\right)  ds$

$\ \ \ \ \ \ \ \ \ \ \ \ \ \ \ \ \ \ \ \ \ \ \ \ \ \ \ \ +\int_{0}^{t}\left(
-c_{j}\sum_{k=1}^{\infty}K(k,j)c_{k}+c_{j-1}\sum_{k=1}^{\infty}K(k,j-1)c_{k}%
\right)  ds$ \ $\{j>0\}$

\ \ \ \ \ \ $c_{0}(t)=c_{0}(0)+\int_{0}^{t}c_{1}\sum_{k=0}^{\infty}%
K(1,k)c_{k}-c_{0}\sum_{k=1}^{\infty}K(k,0)c_{k}.$

\textbf{Definition 2:} For a sequence $(c_{j})_{j=1}^{N}$, we call the
quantity $M_{p}^{N}(t)=\sum_{j=0}^{N}j^{p}c_{j}(t)$ as the $p^{th}-$moment of
the sequence. If the sequence is infinite, then we denote the $p^{th}-$moment
with $M_{p}(t)=\sum_{j=0}^{\infty}j^{p}c_{j}(t).$

\bigskip It is often useful to study the finite version of the infinite system
where the equations are truncated at some order, say, $N<\infty$ as below
\begin{equation}
\text{\ }\dot{c}_{0}^{N}=c_{1}^{N}\sum_{k=0}^{N-1}K(1,k)c_{k}^{N}-c_{0}%
^{N}\sum_{k=1}^{N}K(k,0)c_{k}^{N}, \label{Tode0}%
\end{equation}

\begin{align}
\text{\ }\dot{c}_{j}^{N}  &  =c_{j+1}^{N}\sum_{k=0}^{N-1}K(j+1,k)c_{k}%
^{N}-c_{j}^{N}\sum_{k=0}^{N-1}K(j,k)c_{k}^{N}\label{Tode-j}\\
&  -c_{j}^{N}\sum_{k=1}^{N}K(k,j)c_{k}^{N}+c_{j-1}^{N}\sum_{k=1}%
^{N}K(k,j-1)c_{k}^{N},\text{ }\{1\leq j\leq N-1\}\text{\ \ }\nonumber
\end{align}%
\begin{equation}
\dot{c}_{N}^{N}=-c_{N}^{N}\sum_{k=0}^{N-1}K(N,k)c_{k}^{N}+c_{N-1}^{N}%
\sum_{k=1}^{N}K(k,N-1)c_{k}^{N}, \label{TodeN}%
\end{equation}
with the initial conditions given by%
\begin{equation}
c_{j}^{N}(0)=c_{j,0}\geq0,\text{ \ }\{0\leq j\leq N\}. \label{TodeIC}%
\end{equation}

The fundamental properties of solutions are well known from the standard ODE
theory. We also quote the following basic result from \cite{EE} whose proof we skip

\begin{lemma}
\bigskip Let $g_{j}$ be a sequence of non-negative real numbers. Then,
\begin{equation}
\sum_{j=0}^{N}g_{j}\frac{dc_{j}}{dt}=\sum_{j=1}^{N}(g_{j-1}-g_{j})c_{j}%
^{N}\sum_{k=0}^{N-1}K(j,k)c_{k}^{N}+\sum_{j=0}^{N-1}(-g_{j}+g_{j+1})c_{j}%
^{N}\sum_{k=1}^{N}K(k,j)c_{k}^{N}. \label{mom-red1}%
\end{equation}

\end{lemma}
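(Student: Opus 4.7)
The plan is a direct calculation: multiply equations (\ref{Tode0})--(\ref{TodeN}) by $g_j$, sum over $0\le j\le N$, and then shift indices (a discrete integration by parts) on each of the two natural pairs of terms that appear.

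First I would split the resulting right-hand side into two groups according to the role that $j$ plays in the rate. The \emph{export} group collects every piece of the form $\pm g_j c_\cdot^N \sum_k K(\cdot,k) c_k^N$, namely $g_0 c_1^N \sum_k K(1,k) c_k^N$ from (\ref{Tode0}), the two corresponding terms in (\ref{Tode-j}) for $1\le j\le N-1$, and $-g_N c_N^N \sum_k K(N,k) c_k^N$ from (\ref{TodeN}). The \emph{import} group collects the remaining terms $\pm g_j c_\cdot^N \sum_k K(k,\cdot) c_k^N$ coming from those same three equations.

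In the export group I would rename the summation index $j\mapsto j-1$ in the contribution arising from $c_{j+1}^N \sum_k K(j+1,k) c_k^N$, turning it into $\sum_{j=1}^{N} g_{j-1} c_j^N \sum_{k=0}^{N-1} K(j,k) c_k^N$. Combining this with the negative pieces, which (using the convention $K(0,k)\equiv 0$ from the start of Section~2) can be written as $-\sum_{j=1}^{N} g_j c_j^N \sum_{k=0}^{N-1} K(j,k) c_k^N$, yields the first sum in (\ref{mom-red1}). The analogous shift applied to the import group rewrites $g_j c_{j-1}^N \sum_k K(k,j-1) c_k^N$ as $\sum_{j=0}^{N-1} g_{j+1} c_j^N \sum_{k=1}^{N} K(k,j) c_k^N$, which together with $-\sum_{j=0}^{N-1} g_j c_j^N \sum_{k=1}^{N} K(k,j) c_k^N$ reproduces the second sum.

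The only subtle point, and the sole place a mistake can enter, is the boundary bookkeeping: one must check that the distinguished terms in the equations for $\dot c_0^N$ and $\dot c_N^N$ precisely provide the $j=0$ and $j=N$ endpoints of the shifted sums, so that no stray boundary contributions remain. This is a finite, mechanical verification that relies on the convention $K(0,k)\equiv 0$. No analytic estimates are needed; the lemma is purely algebraic, and positivity of the $g_j$ is not actually used in the derivation (it will enter only when this identity is applied to obtain moment estimates in later sections).
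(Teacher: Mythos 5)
Your proof is correct: the identity is a straightforward discrete summation by parts, and your bookkeeping of the boundary contributions from the $j=0$ and $j=N$ equations, together with the convention $K(0,k)\equiv 0$, is exactly right. The paper itself skips the proof (citing [EE]), but the index-shift calculation you carry out is the standard and essentially only route; your side remark that nonnegativity of $g_j$ is not actually used is also accurate, and the left-hand side should of course read $\dot c_j^N$ rather than $\dot c_j$, a typo in the lemma that your argument implicitly corrects.
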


\bigskip Two immediate results that one can draw from this lemma (by setting
$g_{j}=1$ or $g_{j}=j)$ is the conservation of total volume and total mass
which will also extend to the infinite system. The finite system will be
revisited to obtain estimates on the solutions where the original system can
pose subtleties.

Now, we state the some of the fundamental results on the solutions of the EDG
system (\ref{0-infode})-(\ref{infIC}) with kernels allowing particles to hop
on to the available volume ($K(j,0)>0),$ sometimes called as non-linear
chipping. At this point, no assumptions are made on the kernel, but we always
assume the growth rate of the kernels to be sublinear (see \cite{EE} for
well-posedness results for kernels growing faster than linear).

\begin{theorem}
\label{main-exist}Let $K(j,k)$ be a general kernel satisfying $K(j,k)\leq Cjk$
for large enough $j,k.$ Assume further that $M_{p}(0)<\infty$ for some $p>1$.
Then the infinite system (\ref{0-infode})-(\ref{infIC}) has a global solution
$(c_{j})\in X_{1}$ where $c_{j}(t)$ is continuously differentiable. Moreover
$M_{p}(t)<\infty$ and for all $t<\infty$ and%
\begin{align}
\sum_{0}^{\infty}c_{j}(t)  &  =\sum_{0}^{\infty}c_{j}(0),\label{vol-cons}\\
\sum_{0}^{\infty}jc_{j}(t)  &  =\sum_{0}^{\infty}jc_{j}(0). \label{mass-cons}%
\end{align}

\end{theorem}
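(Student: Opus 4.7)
The plan is the standard truncation-and-limit scheme for infinite ODE systems of coagulation/exchange type. I would first work on the $N$-truncated system (\ref{Tode0})--(\ref{TodeIC}): Picard--Lindel\"of gives a unique local non-negative solution on $\R^{N+1}_{\geq 0}$, and applying Lemma 1 with $g_j\equiv 1$ and with $g_j=j$ yields the finite-$N$ conservation of volume and mass. These a priori bounds extend the solution globally in time. Write $\eta_0=\sum c_{j,0}$ and $\rho_0=\sum jc_{j,0}$ for the initial volume and mass.

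Next I would derive a $p$-th moment bound uniform in $N$. Applying Lemma 1 with $g_j=j^p$, the consumption term (first sum) is non-positive, and since $(j+1)^p-j^p\leq p(j+1)^{p-1}$ the production term can, using the kernel growth $K(k,j)\leq Ckj$ (plus a harmless bounded correction for small indices), be estimated by
\[
Cp\sum_{j}(j+1)^{p-1}c_j^N\cdot j\sum_{k}kc_k^N\ \leq\ C'\rho_0\,M_p^N(t),
\]
so Gronwall gives $M_p^N(t)\leq M_p(0)\,e^{C'\rho_0 t}$. With these bounds the right-hand side of (\ref{Tode-j}) is, for each fixed $j$, uniformly bounded on $[0,T]$, so $\{c_j^N\}_N$ is equicontinuous and bounded; a diagonal Arzel\`a--Ascoli extraction produces a limit $c_j(t)\in C([0,\infty))$. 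To pass to the limit in the nonlinear sums, the tail estimate
\[
\sum_{k>M}K(j,k)c_k^N(s)\ \leq\ Cj\,M^{1-p}M_p^N(s)
\]
gives uniform absolute convergence on $[0,T]$; dominated convergence then yields the integral identity in Definition 1(iii), and continuous differentiability of $c_j$ follows from continuity of the right-hand side.

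The delicate step --- and the main obstacle --- is the sharp conservation (\ref{vol-cons})--(\ref{mass-cons}). Fatou's lemma immediately gives $\sum c_j(t)\leq\eta_0$ and $\sum jc_j(t)\leq\rho_0$. For the reverse inequalities, splitting at level $M$ and using
\[
\sum_{j>M}jc_j^N(t)\ \leq\ M^{1-p}M_p^N(t)\ \leq\ CM^{1-p},\qquad \sum_{j>M}c_j^N(t)\ \leq\ M^{-p}M_p^N(t),
\]
shows that the truncated sums $\sum_{j\leq M}jc_j^N$ and $\sum_{j\leq M}c_j^N$ approximate $\rho_0$ and $\eta_0$ uniformly in $N$. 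Sending first $N\to\infty$, via pointwise convergence of each $c_j^N$, and then $M\to\infty$ closes the inequalities. This is precisely where the hypothesis $p>1$ is essential: without super-linear decay of the initial tail one cannot rule out escape of mass to infinity in the passage $N\to\infty$, and the conservation laws could fail in the limit.
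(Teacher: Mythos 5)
The paper states Theorem~\ref{main-exist} as a quoted result from \cite{EE} and gives no in-text proof, but the later proofs of the generalized-flow propositions explicitly invoke ``the construction in the proof of Theorem~\ref{main-exist}'' via truncated approximants $(\phi_j^i)^N$, so the intended argument is exactly the truncate--estimate--extract scheme you lay out, and your version of it is sound: the $g_j=j^p$ moment propagation combined with the kernel bound and finite-system mass conservation, the tail estimate $\sum_{k>M}K(j,k)c_k^N\le CjM^{1-p}M_p^N$ used to pass to the limit in the integral identity of Definition~1(iii), and the two-sided squeeze $\rho_0^N-CM^{1-p}\le\sum_{j\le M}jc_j^N\le\rho_0^N$ (with $N\to\infty$ then $M\to\infty$) to upgrade the Fatou inequality to exact conservation are all correct, and you identify precisely why $p>1$ is needed. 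One small imprecision: Picard--Lindel\"of gives existence and uniqueness for the truncated system but not non-negativity; that requires the separate (standard, and recorded as Proposition~\ref{c-pos} in this paper) observation that whenever $c_j^N=0$ every loss term in $\dot{c}_j^N$ carries a factor $c_j^N$ and hence vanishes, so $\dot{c}_j^N\ge 0$ on the boundary and the positive orthant is forward-invariant. With that supplied, your proof is complete and matches the cited construction.
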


We note that the global existence and conservation laws still hold if one
replaces the moment assumption ($M_{p}(0)<\infty)$ with a slower growth
assumption on the kernels.

\begin{theorem}
\label{main-exist2}Let $K$ satisfy $K(j,k)\leq Cb_{j}a_{k}$ $($with
$a_{j},b_{j}=o(r)).$Then the infinite system (\ref{0-infode})-(\ref{infIC})
has a global solution $(c_{j})\in X_{1}$ where $c_{j}(t)$ is continuously differentiable.
\end{theorem}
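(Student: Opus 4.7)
The plan is to follow the standard truncation-and-limit strategy: construct solutions $(c_j^N)$ of the finite system (\ref{Tode0})-(\ref{TodeIC}), obtain $N$-uniform bounds and equicontinuity, and then pass to the limit. Relative to Theorem~\ref{main-exist}, the crucial new ingredient is that, in the absence of the moment bound $M_p(0)<\infty$, one must exploit the sublinear factors $a_k,b_k$ in the kernel directly to control the infinite tails that appear on the right-hand side.

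For the truncated system, standard ODE theory yields local classical solutions, and positivity is preserved because each loss term in (\ref{Tode0})-(\ref{TodeN}) carries $c_j^N$ as a factor. Applying Lemma~1 with $g_j\equiv 1$ and with $g_j=j$ the telescoping sums vanish, so $\sum_{j=0}^N c_j^N(t)=\eta_0$ and $\sum_{j=0}^N j c_j^N(t)=\rho_0$ for all $t\geq 0$. Non-negativity together with mass conservation gives $c_j^N(t)\leq\min(\eta_0,\rho_0/j)$, which rules out blow-up and extends the local solutions to $[0,\infty)$.

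The step I expect to be the main obstacle is obtaining $N$-uniform control of the kernel sums. Using $K(j,k)\leq Cb_j a_k$ together with $a_k=o(k)$, for every $\varepsilon>0$ there exists $C_\varepsilon$ with $a_k\leq \varepsilon k+C_\varepsilon$, and hence
\begin{equation*}
\sum_{k=0}^{N}a_k c_k^N(t)\leq \varepsilon\rho_0+C_\varepsilon\eta_0
\end{equation*}
uniformly in $N$ and $t$. An analogous bound on $\sum_k b_k c_k^N$ controls $\sum_k K(k,j)c_k^N(t)$, so both $\sum_k K(j,k) c_k^N$ and $\sum_k K(k,j) c_k^N$ are uniformly bounded by a constant $C(j)$ depending only on $j$, $\eta_0$ and $\rho_0$. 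It follows that $|\dot c_j^N(t)|\leq C(j)$, so $\{c_j^N\}_N$ is equibounded and equicontinuous on every compact time interval.

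A diagonal Arzel\`a--Ascoli extraction produces a subsequence with $c_j^N\to c_j$ uniformly on $[0,T]$ for each $j\geq 0$ and $T<\infty$, with $c_j$ continuous and non-negative. To verify the integral identity of Definition~1, I would pass to the limit in $\int_0^t \sum_k K(j,k)c_k^N\, ds$ by splitting the $k$-sum at a large cutoff $K_0$: the tail is bounded by $Cb_j(\varepsilon\rho_0+C_\varepsilon\eta_0/a_{K_0})$ uniformly in $N$ via the estimate above, while the finite-$k$ part converges by the local uniform convergence, so dominated convergence justifies the limit. The remaining sums are handled identically. By Fatou, $\sum_j jc_j(t)\leq\rho_0$, so $(c_j(t))\in X_1$. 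Finally, continuous differentiability of $c_j$ follows because the right-hand side of the integral equation, being a uniform limit of continuous functions of $t$, is itself continuous, and the fundamental theorem of calculus then gives $c_j\in C^1([0,\infty))$.
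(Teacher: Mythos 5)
Your proposal is essentially the standard truncation--compactness argument, which is what this result rests on; the paper itself states Theorem~\ref{main-exist2} without proof (the substance lives in \cite{EE}), but your reconstruction matches the expected route. The one place you should tighten the write-up is the uniform tail estimate. The displayed bound
$Cb_j(\varepsilon\rho_0+C_\varepsilon\eta_0/a_{K_0})$
does not parse as written (why should $a_{K_0}$ appear in the denominator, and why would it grow?). A cleaner and correct way to get the $N$-uniform smallness of the tail is to write
\[
\sum_{k>K_0} a_k c_k^N \;=\; \sum_{k>K_0}\frac{a_k}{k}\,k\,c_k^N \;\leq\; \Bigl(\sup_{k>K_0}\frac{a_k}{k}\Bigr)\rho_0,
\]
which tends to $0$ as $K_0\to\infty$ precisely because $a_k=o(k)$, uniformly in $N$ and $t$ since $\sum_k k c_k^N=\rho_0$ for all $N$. (Alternatively, use $a_k\leq\varepsilon k+C_\varepsilon$ together with $\sum_{k>K_0}c_k^N\leq\rho_0/K_0$.) With that correction, the rest of the argument — conservation for the truncated system via Lemma~1, the pointwise bound $c_j^N\leq\min(\eta_0,\rho_0/j)$, the $N$-uniform bounds on $\sum_k K(j,k)c_k^N$ and $\sum_k K(k,j)c_k^N$, Arzel\`a--Ascoli with diagonal extraction, passage to the limit in the integral identity by cutting the $k$-sum, and Fatou to land in $X_1$ — is sound, and the $C^1$ claim follows from the integral representation as you say. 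Note that your argument only yields $\sum_j jc_j(t)\leq\rho_0$ by Fatou, not equality; that suffices for the statement as written, though the surrounding text asserts that conservation also holds, which would need a further argument ruling out mass leakage to infinity.
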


While Theorem \ref{main-exist} shows that individual cluster size densities
are continuous in time, when studying the asymptotics we will need to work
with the cluster size distribution as an element of the space $X_{1}.$ The
following result, which is an immediate consequence of Dini's uniform
convergence theorem, gives the continuity of $c(t).$

\begin{proposition}
\label{contX}Let $c$ be the solution of (\ref{0-infode})-(\ref{infIC}). Then
$c:[0,T)\rightarrow X$ is continuous.
\end{proposition}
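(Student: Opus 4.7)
The plan is to establish continuity at an arbitrary fixed $t_{0}\in[0,T)$ by splitting the $X_{1}$-norm into a low-mode part and a tail part, then controlling each separately.

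First, I would use the mass conservation law \eqref{mass-cons} from Theorem \ref{main-exist}. Setting $S_{N}(t)=\sum_{j=1}^{N}jc_{j}(t)$, the sequence $\{S_{N}\}_{N\geq1}$ consists of continuous functions (by continuity of each $c_{j}$, also from Theorem \ref{main-exist}), is monotonically non-decreasing in $N$, and converges pointwise to the constant function $\rho:=\sum_{j\geq1}jc_{j}(0)$. By Dini's uniform convergence theorem, applied on any compact subinterval $I\subset[0,T)$ containing $t_{0}$, the convergence $S_{N}(t)\to\rho$ is uniform on $I$. Thus, given $\epsilon>0$, I can choose $N=N(\epsilon)$ so that
\[
\sum_{j>N}jc_{j}(t)=\rho-S_{N}(t)<\epsilon\qquad\text{for all }t\in I.
\]

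Next, for $t\in I$ I estimate
\[
\|c(t)-c(t_{0})\|_{1}=\sum_{j=1}^{N}j|c_{j}(t)-c_{j}(t_{0})|+\sum_{j>N}j|c_{j}(t)-c_{j}(t_{0})|.
\]
The tail is bounded by $\sum_{j>N}jc_{j}(t)+\sum_{j>N}jc_{j}(t_{0})<2\epsilon$ by the uniform tail estimate applied at both $t$ and $t_{0}$. The finite sum is a sum of finitely many continuous functions and hence tends to $0$ as $t\to t_{0}$. Choosing $t$ sufficiently close to $t_{0}$ forces the finite part below $\epsilon$, yielding $\|c(t)-c(t_{0})\|_{1}<3\epsilon$. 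Since $\epsilon$ is arbitrary, continuity at $t_{0}$ follows.

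The only mildly subtle point, and the step where care is needed, is the invocation of Dini: it requires monotone convergence of continuous functions to a continuous limit on a compact set. Monotonicity in $N$ is automatic because $jc_{j}\geq0$; the limit $\rho$ is constant (hence continuous); continuity of each $S_{N}$ follows from the pointwise continuity of $c_{j}(t)$ asserted in Theorem \ref{main-exist}; and restricting to a compact $I\subset[0,T)$ supplies compactness. Everything else is bookkeeping, so no genuine obstacle is expected.
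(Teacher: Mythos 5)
Your proof is correct and takes precisely the approach the paper intends: the paper states that Proposition~\ref{contX} is ``an immediate consequence of Dini's uniform convergence theorem,'' and your argument is the standard way of unpacking that remark, using mass conservation to supply the monotone pointwise convergence of $S_N(t)$ to the constant $\rho$ and then splitting the $X_1$-norm into a uniformly small tail and a finite continuous part.
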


When discussing the convergence to equilibrium for the super-critical case, in
addition to strong convergence (in the $X_{1}$ norm) we will also use the
concept of weak$\ast$ convergence which has also been frequently used in the
analysis of the Becker-Doring equations.

\textbf{Definition 3: }We say that a sequence $\{x^{i}\}$ in $X_{1}$ converges
weak$\ast$ to $x\in X_{1}$ ($\rightharpoonup^{\ast}$symbolically$)$ if the
following holds

$(i)$ $\sup\left\Vert x^{i}\right\Vert <\infty$

$(ii)$ $x_{j}^{i}\rightarrow x_{j}$ as $i\rightarrow\infty$ for each
$j=1,2,...$

The virtue behind using this concept of convergence is two-fold. First, as
briefly mentioned in the introduction, the positive orbit of the flow
generated by EDG\ equations are not generally compact in $X_{1}.$ In those
cases it will be convenient to consider a finite ball for the flow $B_{\rho
}=\{x\in X_{1},\left\Vert x\right\Vert <\rho\}$ induced with the metric
\[
dist(x,y)=\sum_{j=0}^{\infty}\left\vert x_{j}-y_{j}\right\vert
\]
where the $B_{\rho}$ is compact and the weak$\ast$ convergence is equivalent
to convergence in this new metric$.$A second benefit of studying the
weak$\ast$ convergence is that one can easily characterize the cases where
weak convergence becomes equivalent to strong convergence in $X_{1}$ thanks to
the following lemma \cite{Ball}.

\begin{lemma}
\label{mag2norm} If $x^{j}\rightharpoonup^{\ast}x$ in $X_{1}$ and $\left\Vert
x^{j}\right\Vert \rightarrow\left\Vert x\right\Vert ,$ then it follows that
$x^{j}\rightarrow x.$
\end{lemma}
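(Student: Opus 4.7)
The plan is to adapt the standard $\ell^1$ Radon--Riesz / Schur-type argument to the weighted space $X_{1}$: since the components are non-negative, weak$^\ast$ convergence together with convergence of the total mass $\|x^{j}\|$ forces the mass of the tail to transfer correctly, and no mass can escape to infinity.

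First I would fix $N\in\mathbb{N}$ and split
\[
\|x^{j}-x\| \;=\; \sum_{k=1}^{N} k\,|x^{j}_{k}-x_{k}| \;+\; \sum_{k>N} k\,|x^{j}_{k}-x_{k}|.
\]
For the head sum, since $N$ is fixed and $x^{j}_{k}\to x_{k}$ for each $k$ (the pointwise part of weak$^\ast$ convergence), the finite sum tends to $0$ as $j\to\infty$. This step is routine.

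Next I would control the tail using non-negativity (which is how $\|\cdot\|_{1}$ is defined in this paper, cf.\ Definition~1 and the use of $X_{1}$ for cluster distributions):
\[
\sum_{k>N} k\,|x^{j}_{k}-x_{k}| \;\leq\; \sum_{k>N} k\, x^{j}_{k} \;+\; \sum_{k>N} k\, x_{k}.
\]
The second term is the tail of a convergent series, so it tends to $0$ as $N\to\infty$, uniformly in $j$. For the first term I would write
\[
\sum_{k>N} k\, x^{j}_{k} \;=\; \|x^{j}\| \;-\; \sum_{k=1}^{N} k\, x^{j}_{k},
\]
and then use both ingredients of the hypothesis simultaneously: the norm convergence $\|x^{j}\|\to\|x\|$ together with the pointwise convergence $\sum_{k=1}^{N} k\, x^{j}_{k} \to \sum_{k=1}^{N} k\, x_{k}$ gives
\[
\limsup_{j\to\infty}\sum_{k>N} k\, x^{j}_{k} \;=\; \|x\| - \sum_{k=1}^{N} k\, x_{k} \;=\; \sum_{k>N} k\, x_{k}.
\]

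Combining these three estimates yields $\limsup_{j\to\infty} \|x^{j}-x\| \leq 2\sum_{k>N} k\, x_{k}$ for every $N$, and letting $N\to\infty$ finishes the proof. The conceptual obstacle, such as it is, lies in recognizing that norm convergence is precisely what prevents mass from escaping to infinity in the tail; once this is isolated, the decomposition into a pointwise-controlled head and a norm-controlled tail is the natural route, and no further machinery (not even a diagonal argument) is needed.
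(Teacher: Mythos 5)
Your proof is correct, and it is essentially the standard head--tail argument that appears in the cited reference (Ball, Carr, Penrose \cite{Ball}); the paper itself states the lemma without proof and simply refers to that source. The decomposition into a finite head controlled by pointwise convergence and a tail controlled by conservation of the first moment is exactly the mechanism used there, and your explicit flagging of where non-negativity enters (both in writing $\|x^{j}\|=\sum_{k}k\,x^{j}_{k}$ without absolute values and in the triangle-inequality bound on the tail) is the right thing to note, since the statement is only true on the positive cone.
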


In this new topology we make use of a modified concept of continuity which is
defined as below.

\textbf{Definition 4: }Let $S\subset X_{1}.$ A function $f:S\rightarrow R$ is
said to be weak$\ast$ continuous iff $x^{j}\rightharpoonup^{\ast}x$ implies
$f(x^{j})\rightharpoonup^{\ast}f(x)$ as $j\rightarrow\infty.$

A typical example of weak$\ast$ continuous function in $X_{1}$ is the function
$\beta(x)=\sum_{j=0}^{\infty}g_{j}x_{j}.$ This function is weak$\ast$
continuous if and only if the coefficients satisfy $g_{j}=o(j)$ near infinity.

As the last item of this section we relate and establish the link between the
solutions generated by the EDG equations (under the setting of this paper) and
the concept of generalized flow introduced in \cite{Ball} which is defined as below

\textbf{Definition 5:} A generalized flow $G$ on a metric space $Y$ is a
family of continuous mappings $\phi:[0,\infty)\rightarrow Y$ with the properties

$(i)$ if $\phi\in G$ and $t\geq0$ then $\phi_{t}\in G$ and $\phi_{t}%
(s)=\phi(t+s)$

$(ii)$ if $y\in Y$ there exists at least one $\phi\in G$ with $\phi(0)=y$

$(iii)$ if $\phi^{i}\in G$ with $\phi^{i}(0)$ converges to $y~$in $Y$, then
there exists a subsequence $\phi^{i(k)}$ and an element of $\phi\in G$ such
that $\phi^{i(k)}(t)\rightarrow\phi(t)$ uniformly on compact intervals of
$[0,\infty)$ (with $\phi(0)=y).$

The generalized flow is related to semigroup in the following way.

\textbf{Definition 6.} We say the a generalized flow is a semigroup if for
each $y\in Y,$ there is a unique $\phi(t)$ with $\phi(0)=y$ and the flow is
given by a map $T(t):Y\rightarrow Y$ such that $T(t)\phi(0)=\phi(t)$ and $T$
satisfying the properties

$(i)$ $T(0)=identity$

$(ii)$ $T(s+t)=T(s)T(t)$

$(iii)$ the mapping $(t,\phi(0))\rightarrow T(t)\phi(0)$ is continuous from
$[0,\infty)\times Y\rightarrow Y.$

The next results show that EDG system generates a generalized flow in the
strong or weak sense (of continuity) depending on the growth properties of the kernel.

\begin{proposition}
Let the conditions in Theorem \ref{main-exist} hold $(a_{j},b_{j}=O(j)).$ Then
the system (\ref{0-infode})-(\ref{infIC}) generates a generalized flow on
$X^{+.}$
\end{proposition}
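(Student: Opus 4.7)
Denote by $G$ the collection of all $X^+$-valued solutions of (\ref{0-infode})-(\ref{infIC}) in the sense of Definition 1. The three axioms of a generalized flow need to be verified. Property (i) is immediate from the autonomy of the system: if $\phi\in G$, then $\phi_t(s):=\phi(t+s)$ solves the same equations with initial datum $\phi(t)\in X^+$. Property (ii) I would extract as a corollary of (iii) by a density argument: given $y\in X^+$, the truncations $y^n$ (setting $y_j^n=0$ for $j>n$) have $M_p(y^n)<\infty$ for every $p$, so Theorem \ref{main-exist} supplies $\phi^n\in G$ with $\phi^n(0)=y^n$; since $y^n\to y$ in $X_1$, property (iii) then yields a limit $\phi\in G$ with $\phi(0)=y$. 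Thus the real work lies in property (iii).

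Given $\phi^i\in G$ with $\phi^i(0)\to y$ in $X^+$, the plan is a coordinatewise Arzel\`a-Ascoli argument followed by a passage to the limit in the integral formulation of Definition 1. Mass conservation (\ref{mass-cons}) bounds $\|\phi^i(t)\|_1=\|\phi^i(0)\|_1$ uniformly by some constant $M$. Combined with the trivial pointwise estimate $c_j^i(t)\le M/j$ (for $j\ge 1$) and the kernel growth $K(j,k)\le Cjk$, each of the four terms on the right-hand side of (\ref{infode}) is bounded by a constant of the form $CM(M+\eta)$, uniformly in $j$, $i$ and $t$; hence $|\dot c_j^i(t)|\le C'$. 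A Cantor diagonal extraction then yields a subsequence $\phi^{i(k)}$ and a candidate limit $\phi=(c_j)$ with $c_j^{i(k)}(\cdot)\to c_j(\cdot)$ uniformly on every compact time interval, for each $j$.

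To verify $\phi\in G$, I would pass to the limit in the integral equation of Definition 1. The only delicate point is the infinite sum $\sum_{k=0}^{\infty}K(j,k)c_k^i$, which I split into a finite head $k\le N$ handled by the uniform convergence above and a tail bounded by $Cj\sum_{k>N}kc_k^i(t)$. The argument thus reduces to
\[
\sup_i\ \sup_{t\in[0,T]}\ \sum_{k>N}kc_k^i(t)\ \longrightarrow\ 0\quad\text{as }N\to\infty.
\]
At $t=0$ this follows from $\phi^i(0)\to y$ in $X_1$, and a de la Vall\'ee Poussin argument then produces a convex $\psi$ with $\psi(j)/j\to\infty$ and $\sup_i\sum_j\psi(j)c_j^i(0)<\infty$. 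To propagate the $\psi$-moment in time I differentiate $\sum_j\psi(j)c_j^i$ using (\ref{mom-red1}), control the resulting bilinear expression by $K(j,k)\le Cjk$ and mass conservation, and close with a Gronwall inequality, obtaining a bound uniform in $i$ on every $[0,T]$; Chebyshev then delivers the tail smallness. Dominated convergence supplies the integral identity for $\phi$, while uniform-in-$t$ convergence in $X_1$ follows from combining the finite-head convergence with the uniform tail smallness, giving continuity of $\phi:[0,\infty)\to X^+$ and the required uniform-on-compacta convergence $\phi^{i(k)}\to\phi$.

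The principal obstacle is the uniform propagation of the superlinear moment $\sum_j\psi(j)c_j^i(t)$. The differences $\psi(j-1)-\psi(j)$ produced by (\ref{mom-red1}) must be controlled by a bilinear estimate in $\|c^i\|_1$ and $\sum_j\psi(j)c_j^i$; this is feasible for $\psi$ convex with $\psi(j)\le Cj^{1+\alpha}$ and $\alpha$ small, but requires a careful arithmetic lemma on $\psi$ and is the point at which the hypothesis $a_j,b_j=O(j)$ is truly essential.
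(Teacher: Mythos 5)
Your plan is essentially sound, but it takes a genuinely different route from the paper for the core property (iii). The paper does not propagate a superlinear moment at all. Instead, it extracts the diagonal subsequence from the truncated approximating solutions $(\phi^i)^N$ that were already constructed in the proof of Theorem \ref{main-exist}, obtaining coordinatewise (hence weak$\ast$) uniform convergence on compacta, and then upgrades this to strong $X_1$-convergence in one line by observing that all approximants and the limit solution conserve mass exactly: $\sum_j j\phi_j^{N_i(k)}(t)=\sum_j j\phi_j^{N_i(k)}(0)\to \sum_j j\phi_j(0)=\sum_j j\phi_j(t)$, so Lemma \ref{mag2norm} applies at each $t$ (and uniformity on compacta then follows from Proposition \ref{contX} applied to the limit). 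This sidesteps entirely the step you identify as the ``principal obstacle'' --- the de la Vall\'ee Poussin function $\psi$ with a doubling/difference estimate compatible with (\ref{mom-red1}) and Gronwall. Your approach is more self-contained (it works directly with the $\phi^i$ rather than routing through the truncation scheme), is standard for coagulation-type hierarchies, and would in principle deliver the same conclusion; but it carries genuine technical overhead that the paper's conservation-law trick makes unnecessary, and you correctly flag that the $\psi$-arithmetic lemma still needs to be supplied before your argument closes.

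One place where your write-up actually improves on the paper: property (ii) is \emph{not} immediate, because Theorem \ref{main-exist} only gives a solution when $M_p(0)<\infty$ for some $p>1$, while $X^+$ contains data with only a finite first moment. Your density-plus-(iii) argument (approximate $y$ by finitely supported truncations, apply the existence theorem to those, and invoke (iii) to pass to the limit) is exactly the repair this gap requires; the paper dismisses (ii) as ``clear'' and leaves it implicit.
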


\begin{proof}
Properties $(i)$ and $(ii)$ (in Definition 5) are clear from the definition of
a solution of (\ref{0-infode})-(\ref{infIC}). The continuity of $c:[0,\infty
)\rightarrow X^{+}$ is due to Proposition \ref{contX}. For property $(iii),$
consider the sequence of solutions $\phi^{i}(t)$ with initial conditions
$\phi^{i}(0)\rightarrow\phi(0).$ Let $(\phi_{j}^{i})^{N}(t)$ be the
approximation (as in Theorem \ref{main-exist}) of $\phi_{j}^{i}(t)$ such that
$(\phi_{j}^{i})^{N}(t)\rightarrow\phi_{j}^{i}(t)$ as $N\rightarrow\infty$ for
each $i$. Clearly $\phi^{N_{i}}(0)\rightarrow\phi(0)$ as in $X_{1}$ as
$i\rightarrow\infty.$ By the construction in the proof of Theorem
\ref{main-exist} a subsequence (indexed by $N_{i}(k)$ of $\phi_{j}^{N_{i}%
(k)}(t)\rightarrow\phi_{j}(t)$ uniformly for each $j$ for $N_{i}%
(k)\rightarrow\infty$. To show that the convergence is strong in $X^{+}$ we
use the Lemma \ref{mag2norm} and conservation of mass from Theorem
\ref{main-exist}.
\[
\lim_{N_{i}(k)\rightarrow\infty}\sum j\phi_{j}^{N_{i}(k)}(t)=\lim
_{iN_{i}(k)\rightarrow\infty}\sum j\phi_{j}^{N_{i}(k)}(0)=\sum j\phi
_{j}(0)=\sum j\phi_{j}(t)
\]

\end{proof}

\begin{proposition}
Assume the conditions of Theorem \ref{main-exist2} hold $(a_{j},b_{j}=o(j)).$
Then the system (\ref{0-infode})-(\ref{infIC}) generates a generalized flow on
$B_{\rho}^{+}.$
\end{proposition}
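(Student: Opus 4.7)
The plan is to verify the three properties of Definition 5 for the flow on $B_\rho^+$ equipped with its weak$\ast$ metric, paralleling the previous proposition but replacing strong $X_1$ convergence by convergence in the metric $dist$. Properties (i) and (ii) transfer unchanged from Definition 1 and Proposition \ref{contX}, so essentially all the work lies in property (iii).

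For (iii), given solutions $\phi^i$ with $\phi^i(0) \to y$ in $dist$, I would first invoke Theorem \ref{main-exist2} to produce global $X_1$ solutions $\phi^i(t)$. Mass and total volume are conserved (via Lemma 1 applied to the truncated systems $c^N$ and the limit $N\to\infty$), which furnishes uniform bounds $\sum j\phi^i_j(t)\leq\rho$ and $\sum\phi^i_j(t)\leq\eta$, and hence $\phi^i_j(t)\leq\min(\eta,\rho/j)$ for each $j$. The kernel bound $K(j,k)\leq Cb_ja_k$ with $a_j,b_j=o(j)$, combined with $\sum_k a_k\phi^i_k\leq\varepsilon\rho+C_\varepsilon\eta$ obtained from $a_k\leq\varepsilon k+C_\varepsilon$, produces uniform-in-$i$ Lipschitz estimates in $t$ for each component $\phi^i_j(\cdot)$. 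Arzel\`a--Ascoli together with a Cantor diagonal extraction then yields a subsequence $\phi^{i(k)}$ and a limit $\phi$ with $\phi^{i(k)}_j(t)\to\phi_j(t)$ uniformly on compact intervals in $t$ for every fixed $j$.

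To upgrade this coordinatewise uniform convergence to uniform convergence in $dist$, I would use the tail estimate
\[
\sum_{j\geq N}|\phi^{i(k)}_j(t)-\phi_j(t)|\leq\sum_{j\geq N}\bigl(\phi^{i(k)}_j(t)+\phi_j(t)\bigr)\leq\frac{2\rho}{N},
\]
coming from the mass bound, together with pointwise uniform convergence on $\{0,1,\dots,N-1\}$. That $\phi$ is itself a solution follows by passing to the limit in the integral identities of Definition 1(iii): the estimate $K(j,k)\phi^{i(k)}_k\leq Cb_ja_k\phi^{i(k)}_k$ and the uniform control on $\sum_k a_k\phi^{i(k)}_k$ furnish an integrable majorant on compact time intervals, so dominated convergence closes the argument.

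The main obstacle, and precisely the reason the proposition is stated on $B_\rho^+$ rather than on $X^+$, is that under $a_j,b_j=o(j)$ the positive orbit need not be compact in $X_1$, so the strong convergence argument of the previous proposition---which relied on Lemma \ref{mag2norm} and the norm identity supplied by mass conservation---is unavailable here. However $o(j)$ growth is exactly the threshold at which linear functionals $x\mapsto\sum a_kx_k$ remain weak$\ast$ continuous on bounded mass sets, which is what makes the tail estimates above, and with them the whole Arzel\`a--Ascoli and dominated convergence argument, go through in the weaker metric.
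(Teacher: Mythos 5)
Your proof is correct and reaches the same conclusion, though by a somewhat more self-contained route. The paper's proof piggybacks on the truncation construction from Theorem \ref{main-exist2}: it introduces the truncated approximations $(\phi^i)^N$, performs a diagonal extraction over both indices $i$ and $N$, and inherits the componentwise uniform convergence from that existence proof. You instead apply Arzel\`a--Ascoli and a Cantor diagonal directly to the solutions $\phi^i$ themselves, re-deriving the uniform a priori ingredients (the pointwise bound $\phi^i_j\leq\min(\eta,\rho/j)$ from conservation, the uniform Lipschitz bound on each component from $K(j,k)\leq Cb_ja_k$ with $a_k\leq\varepsilon k+C_\varepsilon$). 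This is cleaner in that it avoids the double limit, and you make explicit two steps the paper glosses over: the upgrade from componentwise-uniform convergence to uniform convergence in the $dist$ metric via the tail estimate $\sum_{j\geq N}(\phi^{i(k)}_j+\phi_j)\leq 2\rho/N$, and the verification that the limit $\phi$ satisfies the integral form of Definition~1 (which needs the weak$\ast$ continuity of $c\mapsto\sum_k a_k c_k$ afforded by $a_k=o(k)$). One small point to state carefully: Theorem~\ref{main-exist2} as written does not list conservation of mass and volume among its conclusions, so you should cite the remark following Theorem~\ref{main-exist} (``the global existence and conservation laws still hold\dots under the slower growth assumption'') to justify $\sum j\phi^i_j(t)\leq\rho$; alternatively, the a priori bound $\|\phi^i(t)\|_1\leq\rho$ from being in $B_\rho^+$ is all the tail estimate actually needs.
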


\begin{proof}
Consider $d(\phi^{i}(0),\phi(0))\rightarrow0$ in $B_{\rho}^{+}$ and let
$(\phi^{i})^{N}(t)$ be the approximating solutions as in the previous
proposition. From Theorem \ref{main-exist2} a subsequence $\phi_{j}^{N_{i}%
(k)}(t)$ converges uniformly to $\phi_{j}(t)$ uniformly for each $j$ thanks to
the bounds $a_{j},b_{j}=o(j).$ Also, $\{\phi_{j}^{N_{i}(k)}\}$ are uniformly
bounded family. Hence $\phi^{N_{i}(k)}\rightarrow^{\ast}\phi^{i}$ which is
equivalent to $dist(\phi^{N_{i}(k)}(t),\phi(t))\rightarrow0.$
\end{proof}

Since one of the requirements for the generalized flow to be a semigroup is
the uniqueness we need the following uniqueness result from \cite{EE} for the
EDG system.

\begin{theorem}
\label{uniq}Let the conditions of Theorem 1 be satisfied with $M_{2}%
(0)<\infty$. Then the ODE system (\ref{0-infode})-(\ref{infIC}) has a unique
solution in $X_{1}$.
\end{theorem}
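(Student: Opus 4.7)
The plan is a standard Gronwall argument applied to the $X_1$-distance between two candidate solutions. Let $c$ and $c'$ both solve (\ref{0-infode})--(\ref{infIC}) with the same initial datum, and set $u_j = c_j - c_j'$, $\sigma_j = \mathrm{sgn}(u_j)$, and
\[
D(t) = \sum_{j\geq 1} j\,|u_j(t)|.
\]
By Theorem \ref{main-exist}, both solutions propagate the moment $M_2$ on every bounded interval, and volume conservation yields $|u_0(t)|\leq D(t)$. The aim is to derive $\dot D(t)\leq C(T)D(t)$ on $[0,T]$, so that $D(0)=0$ forces $D\equiv 0$.

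To compute $\dot D$, I would apply the algebraic identity of Lemma 1 (which extends trivially to signed test sequences and, via a truncation argument using the propagation of moments, to the infinite-$N$ limit) to the difference equation with test sequence $g_j = j\sigma_j$. The non-smoothness of $|\cdot|$ is handled via the convex approximation $\eta_\varepsilon(x)=\sqrt{x^2+\varepsilon^2}$, obtaining bounds uniform in $\varepsilon$ and passing to $\varepsilon\to 0$ by dominated convergence. This yields
\[
\dot D = \sum_{j\geq 1}(g_{j-1}-g_j)\bigl[c_jA_j-c_j'A_j'\bigr] + \sum_{j\geq 0}(g_{j+1}-g_j)\bigl[c_jB_j-c_j'B_j'\bigr],
\]
with $A_j=\sum_k K(j,k)c_k$ and $B_j=\sum_{k\geq 1}K(k,j)c_k$. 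Each bracket is decomposed as $c_jA_j-c_j'A_j' = u_jA_j + c_j'\sum_k K(j,k)u_k$, splitting $\dot D$ into a \emph{diagonal} part (in $u_j$) and an \emph{off-diagonal} part (in $u_k$ with $k\neq j$).

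The crucial cancellation takes place on the diagonal: using $\sigma_j u_j=|u_j|$ and $|\sigma_{j\pm 1}|\leq 1$, an elementary case analysis gives $(g_{j-1}-g_j)u_j\leq -|u_j|$ and $(g_{j+1}-g_j)u_j\leq |u_j|$, without any factor of $j$. Thus the diagonal contribution is bounded by $\sum_j |u_j|(A_j+B_j)\leq CM_1(t)D(t)$ via $K(j,k)\leq Cjk$. For the off-diagonal part I would use the crude bound $|g_{j\pm 1}-g_j|\leq 2j+1$ combined with Fubini to swap the double sum; with $K(j,k)\leq Cjk$ this produces estimates of the form
\[
\Bigl|\sum_{j}(g_{j\pm 1}-g_j)\,c_j'\sum_k K(j,k)\,u_k\Bigr|\;\leq\;C\,M_2(c')(t)\,D(t),
\]
and symmetrically for the term carrying $c_j$ in place of $c_j'$. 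The boundary term at $j=0$ (outside the weight $j$) is absorbed using $|u_0|\leq D$ and the boundedness of $B_0$ under the growth hypothesis on $K(\cdot,0)$.

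The main obstacle is to justify all the rearrangements of infinite sums and the termwise differentiation, which rely on absolute convergence of every double series involved; this is precisely where the hypothesis $M_2(0)<\infty$ enters, since Theorem \ref{main-exist} then propagates $M_2$ locally uniformly in $t$. Once $\dot D(t)\leq C(T)D(t)$ is established on $[0,T]$, Gronwall together with $D(0)=0$ completes the proof.
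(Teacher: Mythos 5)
The paper itself does not prove this theorem: it is quoted verbatim from the companion reference \cite{EE} ("we need the following uniqueness result from \cite{EE}"), so there is no in-paper proof to compare against. What can be said is that your Gronwall plan is the standard Ball--Carr--Penrose-type uniqueness argument for discrete cluster equations, and the calculation you sketch is essentially sound. The key cancellation $(g_{j-1}-g_j)u_j\le -|u_j|$ and $(g_{j+1}-g_j)u_j\le |u_j|$ with $g_j=j\,\mathrm{sgn}(u_j)$ is correct and eliminates the dangerous factor of $j$ in the diagonal part, and the off-diagonal part is genuinely where $M_2$ is needed: $\bigl|\sum_j (g_{j\pm1}-g_j)\,c_j'\sum_k K(j,k)u_k\bigr|\le C\,M_2(c')(t)\,D(t)$, since $|g_{j\pm1}-g_j|\le 2j+1$ and $K(j,k)\le Cjk$ force a $j^2 c_j'$ weight. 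Thus the hypothesis $M_2(0)<\infty$ is being used exactly where it should be, which is a good sanity check on the plan.

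Two technical points you gloss over would need to be tightened in a full write-up. First, Lemma~1 is stated for the finite truncated system; transferring the identity (and the termwise differentiation of $\sum_j j\eta_\varepsilon(u_j)$) to the infinite system requires not merely $M_2(t)<\infty$ pointwise, as stated in Theorem~\ref{main-exist}, but local \emph{boundedness} of $M_2$ on $[0,T]$. This can be obtained by a separate differential inequality $\dot M_2\le C(\rho)M_2+C(\rho)$ (take $g_j=j^2$ in Lemma~1 and use $K(j,k)\le Cjk$), and you should say so rather than deferring entirely to Theorem~\ref{main-exist}, whose statement alone does not give you the uniformity you need for dominated convergence or for the Gronwall constant $C(T)$. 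Second, the bound $K(j,k)\le Cjk$ is only assumed for \emph{large} $j,k$, so at $k=0$ the term $B_0=\sum_k K(k,0)c_k$ and the off-diagonal contributions from the $j=0$ row of the second sum need a separate (but easy) linear bound on $K(\cdot,0)$; you flag this but wave at it, and since $K(j,0)>0$ is precisely the structurally important feature of this model it deserves to be handled explicitly rather than "absorbed".

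Subject to these two clean-ups, the proposal is a legitimate proof route. I cannot certify that it coincides with the argument of \cite{EE}, which is not reproduced in the paper, but it is the natural and expected one, and the later contraction arguments of Section~4 of the paper use the same technology (with tails $C_j=\sum_{k\ge j}c_k$ instead of the densities $c_j$, which is the refinement used in \cite{Sch} to dispense with the moment assumption).
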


With the theorem above and the arguments used in proof of the main existence
theorem one can show that the infinite system (\ref{0-infode})-(\ref{infIC})
actually forms a semigroup.

\begin{theorem}
\label{semiG}Let the conditions of Theorem 1 be satisfied with $M_{2}%
(0)<\infty$. Then the ODE system (\ref{0-infode})-(\ref{infIC}) forms a semigroup.
\end{theorem}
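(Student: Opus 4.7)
The plan is to combine the generalized-flow structure furnished by the previous proposition with the uniqueness result of Theorem~\ref{uniq} to verify the three semigroup axioms of Definition~6. The assumption $M_{2}(0)<\infty$ places us in the setting of Theorem~\ref{uniq}, so for each admissible initial datum $y$ (that is, $y\in X^{+}$ with $M_{2}(y)<\infty$) there is a unique solution $\phi$ with $\phi(0)=y$, and the map $T(t)y:=\phi(t)$ is therefore well defined on this set $Y$.

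Axiom (i), $T(0)=\mathrm{Id}$, is immediate from the integral formulation of a solution. Axiom (ii), the semigroup law, is a standard consequence of uniqueness: for $y\in Y$ and $s,t\geq 0$, the shifted trajectory $\tau\mapsto\phi(s+\tau)$ is itself a solution with initial value $T(s)y$, so Theorem~\ref{uniq} identifies it with $T(\cdot)T(s)y$; evaluating at $\tau=t$ yields $T(s+t)y=T(t)T(s)y$.

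The substantive step is axiom (iii), joint continuity of $(t,y)\mapsto T(t)y$ from $[0,\infty)\times Y$ to $Y$. Fix $y\in Y$ and a sequence $y_{n}\to y$ in $X_{1}$. Property (iii) of Definition~5, applied to the orbits $\phi^{i}:=T(\cdot)y_{n_{i}}$, provides a subsequence converging uniformly on compacta to some $\psi\in G$ with $\psi(0)=y$. By uniqueness $\psi=T(\cdot)y$, and since every convergent subsequence has the same limit, a standard diagonal argument promotes this to convergence of the full sequence $T(\cdot)y_{n}\to T(\cdot)y$ uniformly on compact intervals of $[0,\infty)$. Joint continuity at $(t_{0},y_{0})$ then follows from the triangle inequality
\[
\lVert T(t_{n})y_{n}-T(t_{0})y_{0}\rVert_{1}\leq\lVert T(t_{n})y_{n}-T(t_{n})y_{0}\rVert_{1}+\lVert T(t_{n})y_{0}-T(t_{0})y_{0}\rVert_{1},
\]
whenever $(t_{n},y_{n})\to(t_{0},y_{0})$: the first term vanishes by the uniform-on-compacta convergence just established (the $t_{n}$ lie in a compact set), and the second by continuity of $t\mapsto T(t)y_{0}$ in $X_{1}$, which is Proposition~\ref{contX}.

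The main obstacle I anticipate is ensuring that the admissibility condition $M_{2}<\infty$ is preserved adequately along limits, so that Theorem~\ref{uniq} can legitimately be invoked at each limit point and the subsequential limit can be identified with $T(\cdot)y$. Propagation of $M_{2}$ in time along a single trajectory should follow from Gronwall-type estimates implicit in the proof of Theorem~\ref{main-exist}; for the continuity in initial data one only needs $M_{2}(y)<\infty$ at the limit, which is built into membership in $Y$, so that uniqueness applies at $y$ even if the approximants $y_{n}$ have large second moment.
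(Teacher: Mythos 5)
Your proposal is correct and follows essentially the same route as the paper: existence from Theorem~\ref{main-exist}, uniqueness from Theorem~\ref{uniq}, and joint continuity built from the generalized-flow subsequential convergence and Proposition~\ref{contX}. Your write-up is actually more complete than the paper's very terse proof (which attributes Definition~6(iii) to Proposition~\ref{contX} alone), since you make explicit how uniqueness identifies the subsequential limits to upgrade continuity in $t$ to joint continuity; the concern you flag about propagating $M_2$ is handled by the moment bound $M_p(t)<\infty$ already asserted in Theorem~\ref{main-exist}.
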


\begin{proof}
Properties $(i)$ and $(ii)$ follow from the definition of solution and Theorem
\ref{main-exist}. Under the conditions of the theorem the uniqueness follows
from Theorem \ref{uniq}. Property $(iii)$ is a consequence of Proposition
\ref{contX}.
\end{proof}

\section{CONVERGENCE TO EQUILIBRIUM WITH DETAILED BALANCE}

\subsection{Equilibria and Minimizers.}

We say that $c_{j}$ is an equilibrium solution if $\dot{c}_{j}(t)=J_{j-1}%
-J_{j}=0$ for all $j\geq0.$ In this section we also impose a structure
(detailed balance) on the equilibrium solutions by setting $J_{-1}=0.$ This
implies $J(j)=0$ for all $j.$ Furthermore, throughout this section we assume
$K(j,k)=b_{j}a_{k}$ (class $I)$ which gives the following recursive
relationship between the cluster densities
\begin{equation}
c_{j}=Q_{j}\frac{B}{A}c_{j-1}=\frac{a_{j-1}...a_{0}}{b_{j}...b_{1}}\left(
\frac{B}{A}\right)  ^{j}c_{0}, \label{equi-form}%
\end{equation}
where $B=\sum_{j=1}^{\infty}b_{j}c_{j}$ and $A=\sum_{k=0}^{\infty}a_{j}c_{j}$
and $\frac{Q_{j}}{Qj-1}=\frac{a_{j-1}}{b_{j}}.$ In order for $c_{j}$ be an
equilibrium state, the set of equations for $c,~A,\ B$ must be solved
simultaneously. We show this by finding a unique distribution given the total
number (density) $\eta$ and total mass (density) $\rho$ of clusters. Consider
the equalities $\eta=\sum_{j=0}^{\infty}Q_{j}\left(  \frac{B}{A}\right)
^{j}c_{0}$ and $\rho=\sum_{j=1}^{\infty}jQ_{j}\left(  \frac{B}{A}\right)
^{j}c_{0},$ we want to show, for given $\rho$ and $\eta,$ there is a unique
$z(\rho,\eta)$ such that $\frac{\sum_{j=1}^{\infty}jQ_{k}z^{j}}{\sum
_{j=0}^{\infty}Q_{j}z^{j}}=\frac{\rho}{\eta}.$

Let $z_{s}$ be the radius of convergence of for the series $\sum_{j=0}%
^{\infty}jQ_{j}z^{j}$ given by
\[
z_{s}^{-1}=\lim_{j\rightarrow\infty}(Q_{j})^{1/j}.
\]
Define the function $F$%
\[
F(z)=\frac{\sum_{j=0}^{\infty}jQ_{j}z^{j}}{\sum_{j=0}^{\infty}Q_{j}z^{j}}.
\]

\begin{proposition}
The function $F(z)$ is strictly increasing on $0<z<z_{s}$.
\end{proposition}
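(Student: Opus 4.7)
The plan is to show $F'(z) > 0$ on $(0, z_s)$ by direct computation, recognizing the key quantity as a variance. Set $D(z) = \sum_{j \geq 0} Q_j z^j$ and $N(z) = \sum_{j \geq 0} j Q_j z^j$, so that $F = N/D$ and $N = z D'$. I would first note that the series $D$ also has radius of convergence at least $z_s$ (since $Q_j z^j \leq j Q_j z^j$ for $j \geq 1$ and $z > 0$), and use the classical fact that power series can be differentiated term by term inside their radius of convergence. The quotient rule then gives $F'(z) = (N'(z) D(z) - N(z) D'(z))/D(z)^2$, reducing matters to showing the numerator is strictly positive.

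The cleanest route is to interpret $p_j(z) = Q_j z^j/D(z)$ as a probability distribution on the non-negative integers. Then $F(z) = E[J]$, and a short computation using the identity $z D'(z)/D(z) = F(z)$ shows that $p_j'(z) = p_j(z)(j - F(z))/z$, whence
\[
z F'(z) = \sum_{j \geq 0} j p_j(z)(j - F(z)) = E[J^2] - E[J]^2 = \operatorname{Var}(J).
\]
Equivalently, one can expand the Cauchy products $N' D$ and $N D'$ and symmetrize in the summation indices to obtain the manifestly non-negative identity
\[
z\bigl(N'(z) D(z) - N(z) D'(z)\bigr) = \tfrac{1}{2}\sum_{j, k \geq 0} (j-k)^2 Q_j Q_k z^{j+k}.
\]

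Strict positivity then follows because the distribution $p(z)$ is non-degenerate: under the standing assumption $a_k, b_k > 0$, one has $Q_j = \prod_{k=1}^{j} a_{k-1}/b_k > 0$ for every $j \geq 0$ (with $Q_0 = 1$), so $p_0(z), p_1(z) > 0$ and $\operatorname{Var}(J) > 0$. Equivalently, the $(j,k) = (0,1)$ term in the symmetrized sum contributes $Q_0 Q_1 z > 0$. I do not anticipate a real obstacle here; the only care needed is justifying termwise differentiation of the two power series, which is classical on $(0, z_s)$, and observing that $Q_j > 0$ for all $j$ so that the probabilistic interpretation is meaningful.
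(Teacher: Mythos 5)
Your proof is correct and takes essentially the same route as the paper: termwise differentiation inside the radius of convergence, the quotient rule, and symmetrization of the numerator, since your identity $\tfrac{1}{2}\sum_{j,k}(j-k)^2 Q_j Q_k z^{j+k}$ is the paper's $\sum_{j,k}Q_jQ_k z^{j+k-1}\bigl((j^2+k^2)/2-jk\bigr)$ up to the factor of $z$, and the variance interpretation is a reframing of the same computation. One small improvement on your side: the paper only observes $(j^2+k^2)/2\geq jk$ and asserts the numerator is positive, whereas you explicitly isolate a strictly positive term (the $(j,k)=(0,1)$ contribution, using $Q_0,Q_1>0$), which is what actually delivers \emph{strict} monotonicity.
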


\begin{proof}
Since $z<z_{s}$ the series $\sum_{j=0}^{\infty}Q_{j}z^{j}$ and $\sum
_{j=0}^{\infty}jQ_{j}z^{j}$ can be differentiated term by term.%
\[
\frac{dF(z)}{dz}=\frac{\sum_{j=0}^{\infty}j^{2}Q_{j}z^{j-1}\sum_{k=0}^{\infty
}Q_{k}z^{k}-\sum_{j=0}^{\infty}jQ_{j}z^{j}\sum_{k=0}^{\infty}kQ_{k}z^{k-1}%
}{\left(  \sum_{j=0}^{\infty}Q_{j}z^{j}\right)  ^{2}}.
\]
Using the symmetry of the sum in the first term of the numerator one has%
\[
\frac{dF(z)}{dz}=\frac{\sum_{j=0}^{\infty}\sum_{k=0}^{\infty}Q_{j}%
Q_{k}z^{j+k-1}((j^{2}+k^{2})/2-jk)}{\left(  \sum_{j=0}^{\infty}Q_{j}%
z^{j}\right)  ^{2}}.
\]
Since $(j^{2}+k^{2})/2\geq jk$ holds for any $j,k\geq0$ the numerator is
positive and hence $\frac{dF(z)}{dz}>0,$ proving the Proposition$.$
\end{proof}

Now, we define the critical mass density $\rho_{s}$ as
\[
\rho_{s}=\eta\sup_{z\leq z_{s}}F(z)
\]
Then, for a given $\rho<\rho_{s}$ there is a unique value of $z(\rho,\eta)$
satisfying the equality $F(z(\rho,\eta))=\frac{\rho}{\eta}$. This in turn
uniquely determines $c_{0}^{e}$ by $c_{0}^{e}=\frac{\eta}{\sum_{j=0}^{\infty
}Q_{k}z(\rho,\eta)^{j}}$ as well as $A=\sum_{j=0}^{\infty}a_{j}c_{j}^{e}$ and
$B=\sum_{j=1}^{\infty}b_{j}c_{j}^{e}$ in terms of $z(\rho,\eta).$ Hence, we
have proved

\begin{proposition}
Let $\rho,\eta<\infty$ be given$.$ Then, if $\rho<\rho_{s}$ the EDG system
admits a unique equilibrium distribution $c^{e}(\rho)$ given by%
\[
c_{j}^{e}=\frac{Q_{j}z(\rho,\eta)^{j}\eta}{\sum_{k=0}^{\infty}Q_{j}z(\rho
,\eta)^{j}}.
\]
If $\rho>\rho_{s},$ then there is no equilibrium state with density $\rho.$
\end{proposition}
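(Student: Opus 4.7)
The plan is to verify the displayed formula does define an equilibrium when it exists, then deduce existence/uniqueness from the monotonicity of $F$, and finally rule out equilibria in the supercritical regime by the same monotonicity argument used in reverse.

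First I would show the subcritical existence. By the preceding Proposition, $F$ is strictly increasing on $(0,z_s)$. Since $Q_0=1$ and the series $\sum Q_j z^j$ and $\sum j Q_j z^j$ are power series with non-negative coefficients, term-by-term evaluation gives $F(0)=0$, and $F$ is continuous on $(0,z_s)$. Thus the range of $F$ on $(0,z_s)$ is the open interval $(0,\sup_{z<z_s}F(z))=(0,\rho_s/\eta)$. For $\rho<\rho_s$, the intermediate value theorem together with strict monotonicity produces a unique $z=z(\rho,\eta)\in(0,z_s)$ with $F(z)=\rho/\eta$. Define $c_j^e$ by the displayed formula; then by construction $\sum_{j\ge 0}c_j^e=\eta$ and $\sum_{j\ge 1}j c_j^e=\rho$. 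Setting $A=\sum a_j c_j^e$ and $B=\sum b_j c_j^e$ (both finite since the series converge inside the radius of convergence under mild growth assumptions on $a_j,b_j$), the recursion $c_j^e/c_{j-1}^e=(a_{j-1}/b_j)(B/A)$ equivalent to (\ref{equi-form}) is satisfied iff $B/A=z(\rho,\eta)$, which follows from the ratios $Q_j/Q_{j-1}=a_{j-1}/b_j$ and the identification of $z$ with this ratio in the derivation leading to (\ref{equi-form}). Hence $J_j\equiv 0$ and $c^e$ is an equilibrium.

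For uniqueness in the subcritical case, any equilibrium with detailed balance must, by (\ref{equi-form}), be of the form $c_j=Q_j\tilde z^j c_0$ for some $\tilde z=B/A\ge 0$; the normalisations $\sum c_j=\eta$ and $\sum jc_j=\rho$ then force $F(\tilde z)=\rho/\eta$, and strict monotonicity of $F$ gives $\tilde z=z(\rho,\eta)$ and $c_0=\eta/\sum Q_j z(\rho,\eta)^j$. The supercritical part is handled by the same structural argument: if an equilibrium with mass $\rho>\rho_s$ existed, the associated $\tilde z=B/A$ would have to satisfy $\tilde z<z_s$ (otherwise $\sum Q_j \tilde z^j$ or $\sum jQ_j \tilde z^j$ diverges, contradicting $\sum c_j=\eta<\infty$ or $\sum jc_j=\rho<\infty$), and then $\rho=\eta F(\tilde z)\le \eta\sup_{z<z_s}F(z)=\rho_s$, contradicting $\rho>\rho_s$.

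The main obstacle I foresee is the edge case $\tilde z=z_s$: one must argue that even if $F$ extends continuously up to $z_s$ (when the sums at $z_s$ converge), the supremum defining $\rho_s$ is then attained at $z_s$, so the strict inequality $\rho>\rho_s$ still precludes $F(\tilde z)=\rho/\eta$; and if the sums at $z_s$ diverge, then $\tilde z=z_s$ is already excluded by finiteness of $\eta$ and $\rho$. Either way the contradiction closes. A secondary technical point is justifying $A,B<\infty$ in the subcritical construction: this requires $a_j,b_j$ to grow slowly enough that $\sum a_j Q_j z^j$ and $\sum b_j Q_j z^j$ still have radius of convergence at least $z_s$, which is implicit in the standing assumptions on the kernel but should be noted explicitly.
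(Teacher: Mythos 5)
Your proof is correct and follows essentially the same route as the paper: use the strict monotonicity of $F$ (established in the preceding Proposition) to produce a unique $z(\rho,\eta)\in(0,z_s)$ with $F(z)=\rho/\eta$ for subcritical $\rho$, then read off $c_0^e$, $A$, $B$ and verify the detailed-balance recursion. You actually fill in more detail than the paper does, which merely asserts the conclusion after defining $z(\rho,\eta)$ — in particular you spell out why $B/A=z(\rho,\eta)$ (which follows from $a_jQ_j=b_{j+1}Q_{j+1}$, giving $A=B/z$), give the uniqueness argument, and explicitly handle the supercritical nonexistence together with the boundary case $\tilde z=z_s$, all of which the paper leaves implicit.
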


Next, we define some functions which will be useful in the analysis. Consider
the function $G(c)=\sum_{j=0}^{\infty}c_{j}(\ln(c_{j})-1)$ which has the form
of entropy. It can be shown easily that it is weak$\ast$ continuous on $X_{1}%
$. Moreover, restricted to the ball $B_{\rho}=\{x\in X:\left\Vert x\right\Vert
<\rho\},$ it can be shown that $G(c)$ is bounded (see \cite{Ball}). We define
the relative entropy by
\[
V(c)=G(c)-\sum_{j=1}^{\infty}jc_{j}\ln(Q_{j})^{1/j}=\sum_{j=0}^{\infty}%
c_{j}(\ln(\frac{c_{j}}{Q_{j}})-1).
\]
It is assumed throughout the paper that $z_{s}>0$ which is equivalent to
$\lim_{j\rightarrow\infty}(Q_{j})^{1/j}<\infty.$ If we further assume
$\lim\inf(Q_{j})^{1/j}>0$ then $V(c)$ becomes bounded from above and below.
Next, we define the \textit{modified} relative entropy%
\[
V_{z,y}(c)=V(c)-\ln z\sum_{j=1}^{\infty}jc_{j}-\ln y\sum_{j=0}^{\infty}c_{j}%
\]
and the set
\[
X_{1}^{+,\rho,\eta}=\{x\in X^{+}:\sum_{j=1}^{\infty}jx_{j}=\rho,\sum
_{j=1}^{\infty}x_{j}=\eta\}.
\]
The next theorem shows the relationship between the equilibrium solutions and
the minimizers of the entropy functionals and the related sets$.$

\begin{theorem}
Assume that $z_{s}<\infty$ and $\rho<\infty.$ Then,

$(i)$ If $0\leq\rho\leq\rho_{s},$ then $c(\rho,\eta)$ is the unique minimizer
of $V_{z(\rho,\eta),y(\rho,\eta)}$ on $X_{1}^{+}$ and of $V(c)$ in
$X_{1}^{+,\rho,\eta}.$ Equivalently, every minimizing sequence $c^{i}$ of $V$
on $X_{1}^{\rho,\eta}$ converges strongly in $X_{1}.$

$(ii)$ If $\rho_{s}<\rho<\infty\,,$ then the minimizing sequence $c^{i}$
converges weakly to $c(\rho_{s},\eta)$ but not strongly and
\[
\inf_{c\in X_{1}}V_{z_{s},y_{s}}(c)=V_{z_{s},y_{s}}(c_{s}).
\]

\end{theorem}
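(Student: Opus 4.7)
\emph{Proof plan.} The plan is to reduce both statements to one Bregman-type identity. For any $z,y > 0$, set $c_j^\ast := Q_j\, y\, z^j$. Direct rearrangement of the definition of $V_{z,y}$ yields
\[
V_{z,y}(c) - V_{z,y}(c^\ast) \;=\; \sum_{j=0}^\infty \Phi(c_j,c_j^\ast), \qquad \Phi(x,w) := x\ln(x/w) - x + w,
\]
where $\Phi(\cdot,w) \ge 0$ is strictly convex on $(0,\infty)$ and vanishes only at $x = w$ (with the convention $\Phi(0,w)=w$). Note also that $V - V_{z,y} = \ln z \cdot \sum j c_j + \ln y \cdot \sum c_j$, so on $X_1^{+,\rho,\eta}$ the two functionals differ only by the additive constant $-\rho\ln z - \eta\ln y$, and their minimizing sequences coincide.

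For part (i), I would take $z = z(\rho,\eta)$ and $y = y(\rho,\eta) := \eta / \sum_k Q_k z^k$, so that the equilibrium $c^e := c(\rho,\eta)$ from the preceding proposition is exactly the $c^\ast$ associated with these parameters and lies in $X_1^{+,\rho,\eta}$. The identity then gives $V_{z,y}(c) \ge V_{z,y}(c^e)$ for every $c \in X_1^+$, with equality iff $c_j = c_j^e$ for all $j$; by the constant-shift observation this transfers to $V$ on $X_1^{+,\rho,\eta}$. For the strong convergence of a minimizing sequence $c^i \subset X_1^{+,\rho,\eta}$, the identity forces $\sum_j \Phi(c_j^i,c_j^e)\to 0$; since each $c_j^i \le \eta$ is bounded and $\Phi(\cdot,c_j^e)$ grows at infinity with unique zero at $c_j^e$, one deduces $c_j^i \to c_j^e$ pointwise in $j$. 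Mass conservation upgrades this to $X_1$-convergence: for fixed $N$,
\[
\sum_{j>N} j\,c_j^i \;=\; \rho - \sum_{j\le N} j\,c_j^i \;\longrightarrow\; \rho - \sum_{j\le N} j\,c_j^e \;=\; \sum_{j>N} j\,c_j^e,
\]
which is small for $N$ large because $c^e \in X_1$; combined with pointwise convergence on $\{j \le N\}$ this yields $\sum_j j\,|c_j^i - c_j^e| \to 0$.

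For part (ii), take $z = z_s$ and $y_s := \eta/\sum_k Q_k z_s^k$, so that $c_s := c(\rho_s,\eta)$ lies in $X_1^+$ but has total mass $\rho_s < \rho$. The Bregman identity over all of $X_1^+$ immediately yields the displayed equality $\inf_{c \in X_1^+}V_{z_s,y_s}(c) = V_{z_s,y_s}(c_s)$. To show that a minimizing sequence $c^i \subset X_1^{+,\rho,\eta}$ also satisfies $V_{z_s,y_s}(c^i) \to V_{z_s,y_s}(c_s)$, I would match this infimum from above with the explicit competitor $\tilde c^i$ obtained from $c_s$ by adding $a_i := (\rho-\rho_s)/M_i$ clusters of size $M_i \to \infty$ and subtracting $a_i$ from $c_{s,0}$ to preserve the volume $\eta$; using $\lim_j j^{-1}\ln Q_j = -\ln z_s$ one checks $V(\tilde c^i) \to V(c_s) + (\rho-\rho_s)\ln z_s$, equivalently $V_{z_s,y_s}(\tilde c^i) \to V_{z_s,y_s}(c_s)$. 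Once $\sum_j \Phi(c_j^i,c_{s,j}) \to 0$, the pointwise argument from part (i) gives $c_j^i \to c_{s,j}$ for every $j$, and since the $c^i$ are uniformly bounded in $X_1$ this is precisely $c^i \rightharpoonup^\ast c_s$ in the sense of Definition 3. Strong convergence in $X_1$ then fails because $\sum_j j c_j^i \equiv \rho > \rho_s = \sum_j j c_{s,j}$, which is exactly the obstruction detected by Lemma \ref{mag2norm}.

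The step I expect to be most delicate is the upper bound in $\inf_{X_1^{+,\rho,\eta}} V_{z_s,y_s} = V_{z_s,y_s}(c_s)$ for part (ii): the lower bound is automatic from the Bregman identity, but matching it from above demands the ``spike at $M \to \infty$'' construction, whose expansion rests on the critical asymptotic $Q_j^{1/j} \to 1/z_s$. Everything else, including all of part (i), then reduces to strict convexity of $\Phi$ together with the two conservation laws.
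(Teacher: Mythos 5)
Your proof is correct and rests on the same underlying insight as the paper's (that $V_{z,y}$ is a sum of pointwise-minimizable terms), but your Bregman repackaging makes it noticeably more rigorous in two places where the paper is terse. In part (i), the paper simply asserts that a minimizing sequence satisfies $c_j^i\to c_j^e$ without supplying an argument; your reduction to $\sum_j\Phi(c_j^i,c_j^e)\to 0$ plus strict convexity of $\Phi(\cdot,w)$ fills that gap, and your tail-splitting upgrade to $X_1$-convergence is essentially an inline proof of Lemma~\ref{mag2norm}. In part (ii) the divergence is more substantial: the paper exhibits the single competitor $c_j^i=c_j^e(\rho_s,\eta)+\delta_{ij}(\rho-\rho_s)/i$, which repairs the mass deficit $\rho-\rho_s$ but also inflates the total volume by $(\rho-\rho_s)/i$, so it does not actually lie in the constraint set $X_1^{+,\rho,\eta}$; your spike-plus-compensation competitor (adding $a_i=(\rho-\rho_s)/M_i$ clusters at size $M_i$ and subtracting $a_i$ from $c_{s,0}$) respects both conservation laws. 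Your version also shows that \emph{every} minimizing sequence, not merely the particular competitor, converges weak-$\ast$ to $c_s$: the Bregman identity gives the lower bound $\inf V_{z_s,y_s}\geq V_{z_s,y_s}(c_s)$ over all of $X_1^+$, your competitor gives the matching upper bound over $X_1^{+,\rho,\eta}$, and then $\sum_j\Phi(c_j^i,c_{s,j})\to 0$ forces pointwise convergence. The asymptotic you flag as delicate does check out: $\ln c_{s,M_i}=\ln(Q_{M_i}z_s^{M_i}y_s)=o(M_i)$ by $Q_j^{1/j}\to 1/z_s$, while $a_i\ln a_i=O(M_i^{-1}\ln M_i)\to 0$, so $\Phi(c_{s,M_i}+a_i,c_{s,M_i})\to 0$ as required. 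In short, same decomposition, but you have made the part (ii) argument complete where the paper leaves it at the level of a sketch.
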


\begin{proof}
One can easily check that the function $c_{j}\rightarrow c_{j}\left(
\ln(\frac{c_{j}}{Q_{j}z(\rho,\eta)^{j}y(\rho,\eta)})-1\right)  $ has the
minimum at $c_{j}=Q_{j}z(\rho,\eta)^{j}y(\rho,\eta)$ and hence the function
$V_{z(\rho,\eta),y(\rho,\eta)}(c)$ is minimized (over $X_{1})$ exactly at the
equilibrium distribution $c_{j}^{e}(\rho,\eta)$. Clearly, $c_{j}^{e}(\rho
,\eta)$ is also a minimizer of $V(c)$ on the set $X_{1}^{\rho,\eta}=\{c\in
X:\sum_{j=1}^{\infty}jc_{j}=\rho,$ $\sum_{j=1}^{\infty}c_{j}=\eta\}$.

Now, because $c_{j}^{i}$ is bounded on $X_{1}^{\rho,\eta}$ and because
$c_{j}^{i}\rightarrow c_{j}^{e},$ one has the weak convergence of the
minimizing sequence to the equilibrium solution. On the set $X_{1}^{\rho,\eta
},$ mass of the sequence is constant and thanks to Lemma \ref{mag2norm}, one
gets $\left\Vert c_{j}^{i}\right\Vert \rightarrow\left\Vert c_{j}%
^{e}\right\Vert $.

For the second part of the theorem, let $\rho>\rho_{s}$ and consider a special
sequence $c^{i}\in X_{1}^{\rho,\eta}$ (as in \cite{Ball}) defined by
\[
c_{j}^{i}=c_{j}^{e}(\rho_{s},\eta)+\delta_{ij}(\frac{\rho-\rho_{s}}{i})
\]

It is clear that $c^{i}\rightarrow^{\ast}c^{e}(\rho_{s},\eta).$ Also, it can
be shown by straightforward computation that $V_{z_{s,y_{s}}}(c^{i}%
)\rightarrow V_{z_{s},y_{s}}(c^{e}(\rho_{s},\eta)).$ However the convergence
cannot be strong as $\left\Vert c^{i}\right\Vert =\rho>\rho_{s}=c^{e}(\rho
_{s},\eta).$
\end{proof}

In the sequel, it will be important to know the continuity property of
$V_{z,s}(c).$ We have the following.

\begin{proposition}
\label{V-cont}$V_{z,y}(c)$ is weak$\ast$ continuous in $X_{1}$ if
$\lim_{j\rightarrow\infty}(Q_{j})^{1/j}$ exists and $z=z_{s}.$
\end{proposition}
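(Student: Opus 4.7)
The plan is to decompose $V_{z,y}$ into two pieces, one already known to be weak$\ast$ continuous (the entropy $G$) and one linear in $c$ to which I can apply the standard criterion that $\sum g_j x_j$ is weak$\ast$ continuous on $X_1$ if and only if $g_j=o(j)$ (quoted in the excerpt just after Definition 4). The whole proof then reduces to checking that, precisely when $z=z_s$, the coefficient sequence arising from $V_{z,y}$ grows slower than linearly.

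Concretely, I would first rewrite
\[
V_{z,y}(c)=\sum_{j=0}^{\infty}c_{j}\bigl(\ln c_{j}-1\bigr)-\sum_{j=0}^{\infty} g_j\, c_j,\qquad g_j:=\ln Q_j + j\ln z+\ln y,
\]
so that $V_{z,y}(c)=G(c)-\beta(c)$ with $\beta(c)=\sum g_j c_j$. The excerpt already records that $G$ is weak$\ast$ continuous on $X_1$ (this follows from Dini's theorem combined with boundedness on $B_\rho$, exactly as in \cite{Ball}), so it remains to control $\beta$.

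Next I would apply the weak$\ast$ continuity criterion for linear functionals: $\beta$ is weak$\ast$ continuous on bounded subsets of $X_1$ precisely when $g_j/j\to 0$. Here
\[
\frac{g_j}{j}=\frac{\ln Q_j}{j}+\ln z+\frac{\ln y}{j}.
\]
Under the standing assumption that $\lim_{j\to\infty}(Q_j)^{1/j}$ exists, this limit equals $z_s^{-1}$ by definition of the radius of convergence, so $\tfrac{1}{j}\ln Q_j\to -\ln z_s$. Consequently $g_j/j\to \ln z-\ln z_s$, and this vanishes exactly when $z=z_s$. For $z=z_s$ we therefore have $g_j=o(j)$, whence $\beta$ is weak$\ast$ continuous, and $V_{z_s,y}=G-\beta$ is weak$\ast$ continuous.

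The only nontrivial point, which is really the content of the statement, is the verification $g_j=o(j)$ at the critical value $z=z_s$; this is immediate once the existence (and not merely the limsup) of $\lim_j (Q_j)^{1/j}$ is invoked, and it is precisely the requirement that the $j\ln z$ term cancel the leading behavior of $\ln Q_j$. Everything else (weak$\ast$ continuity of $G$, and the $o(j)$ criterion for linear functionals) is a direct quotation from the material already established in Section~2.
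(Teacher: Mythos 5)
Your proof is correct and follows essentially the same approach as the paper: decompose $V_{z,y}=G-\beta$ with $G$ already known to be weak$\ast$ continuous, apply the $g_j=o(j)$ criterion to the linear part $\beta$, and observe that $\tfrac{1}{j}\ln Q_j\to-\ln z_s$ forces the cancellation precisely when $z=z_s$. Your write-up is in fact a bit more explicit than the paper's (which has a minor slip writing ``$+y$'' for ``$+\ln y$''), but the substance is identical.
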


\begin{proof}
Recall that a function $W(c)=\sum_{j=1}^{\infty}g_{j}c_{j}$ is weak$\ast$
continuous if and only if $g_{j}=o(j).$ Then, for $V_{z,y}(c)=V(c)-\ln
z\sum_{j=1}^{\infty}jc_{j}-\ln y\sum_{j=0}^{\infty}c_{j}$ to be weak$\ast$
continuous we need $\ln Q_{j}+j\ln(z)+y=o(j)$ which amounts to showing that
$\lim_{j\rightarrow\infty}\frac{\ln(Q_{j}z^{j}y)}{j}=0.$ But this follows if
$\lim_{j\rightarrow\infty}(Q_{j})^{1/j}z=1$, that is, $z=z_{s}$.
\end{proof}

\textbf{Remark:} Recall from the earlier discussions that $z_{s}$ is the
radius of convergence of the series $\sum_{k=0}^{\infty}Q_{k}z^{k}.$ A more
direct way to compute the radius of convergence is the ratio test which gives
$z_{s}=\lim_{k\rightarrow\infty}(Q_{k}/Q_{k+1})=\lim\frac{b_{k+1}}{a_{k}}.$
So, the behavior of the equilibria (and the conditions for the dynamic phase
transition as shown in the next section) is decided by the competition in the
tendency of exchange favoring "export" against "import" of monomers
($K(j,k)=b_{j}a_{k}).$ This leads to following scenarios

$(i)$ $\lim\frac{b_{k+1}}{a_{k}}=\infty,$ (\textit{exporting wins over
importing or the system favors smaller clusters}): In this case $z_{s}%
=\infty.$ Hence, for any initial mass the system can support equilibrium.

$(ii)$ $\lim\frac{b_{k+1}}{a_{k}}=\alpha>0$ (\textit{exporting and importing
are comparable}): In this case $z_{s}=\alpha$ and whether the system can
always support an equilibrium depends on the whether $\sum_{k=0}^{\infty}%
Q_{k}z_{s}^{k}=\rho_{s}$ is finite. If $\rho>\rho_{s}$ then there will be no equilibrium.

$(iii)$ $\lim\frac{b_{k+1}}{a_{k}}=0$ (\textit{importing wins over
exporting}): In this case $z_{s}=0$ and hence there is no equilibrium
irrespective of initial mass.

\subsection{Lyapunov Functions and Asymptotic Behavior}

In this section we show the convergence of solutions, under suitable
conditions, to equilibrium in the strong and weak$\ast$ senses. The approach
is similar to \cite{Ball}. The main object of use will be the
relative\ entropy $V(c)$ whose minimization was discussed in the previous
section. Our hope is that evolving in time $c(t_{i})$ becomes the minimizing
sequence for $V.$ It is therefore important to know how $V$ will behave in
time. We first state an elementary result whose proof follows easily from the
points made after Definition 4.

\begin{lemma}
\label{G-cont}The function $G(c)=\sum_{j=0}^{\infty}c_{j}(\ln(c_{j})-1)$ is
finite and weak$\ast$ continuous in $X_{1}.$
\end{lemma}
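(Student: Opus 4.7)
My plan is to separate finiteness of $G(c)$ for non-negative $c\in X_{1}$ (uniformly on bounded subsets) from weak$\ast$ continuity, but both rely on the same family of tail estimates built around a single algebraic rewriting of $c_{j}\ln c_{j}$ that converts the slowly varying factor $\ln c_{j}$ into quantities controlled by the $X_{1}$-norm.

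For finiteness (and uniform boundedness on bounded subsets), I would fix a non-negative $c\in X_{1}$ with $\sum_{j\geq 1}jc_{j}\leq\rho$ and split the indices $j\geq 1$ according to whether $c_{j}\geq 1$ or $c_{j}<1$. In the first range the pointwise bound $c_{j}\leq\rho/j$ forces $j\leq\rho$, so only finitely many indices contribute and each is controlled by $\rho$. In the second range I would apply the identity
\[
-c_{j}\ln c_{j}\;=\;-\frac{(jc_{j})\ln(jc_{j})}{j}+c_{j}\ln j,
\]
bound the first summand by $-x\ln x\leq 2\sqrt{x}$ for $x\in(0,1]$ combined with Cauchy--Schwarz, giving $\sum_{j\geq 1}(1/j)\sqrt{jc_{j}}\leq(\pi^{2}/6)^{1/2}\rho^{1/2}$, and bound the second summand by $(\ln j)/j\leq 1/e$, giving $\sum_{j\geq 1}c_{j}\ln j\leq\rho/e$. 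Together with the isolated term $c_{0}(\ln c_{0}-1)$ this yields a bound on $|G(c)|$ depending only on $c_{0}$ and $\rho$.

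For weak$\ast$ continuity, let $c^{i}\rightharpoonup^{\ast} c$ in $X_{1}$, so $\sup_{i}\|c^{i}\|_{1}\leq C$ and $c_{j}^{i}\to c_{j}$ for every $j$. Given $\varepsilon>0$, the tail estimates from the previous paragraph allow me to choose $N$ so large that
\[
\Bigl|\sum_{j>N}c_{j}^{i}(\ln c_{j}^{i}-1)\Bigr|<\varepsilon/3
\]
holds simultaneously for every $i$ and for the limit $c$; all the tail contributions collapse to quantities of the form $(\ln N)/N$, $1/N$, and $(\sum_{j>N}1/j^{2})^{1/2}$, each tending to zero uniformly in $i$ thanks only to $\|c^{i}\|_{1}\leq C$. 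For the finite head $\sum_{j=0}^{N}c_{j}^{i}(\ln c_{j}^{i}-1)$, pointwise convergence together with the continuity of $x\mapsto x(\ln x-1)$ on $[0,\infty)$ (using the convention $0\ln 0=0$) gives convergence to $\sum_{j=0}^{N}c_{j}(\ln c_{j}-1)$. A triangle inequality then delivers $|G(c^{i})-G(c)|<\varepsilon$ for $i$ sufficiently large.

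The main obstacle is producing tail bounds that are uniform in $i$ from the $X_{1}$-bound alone, since a priori the $X_{1}$-norm controls only $\sum jc_{j}$ and not entropic quantities like $\sum c_{j}\ln j$. The rewriting above is the single trick that closes the gap: $\ln c_{j}$ is traded either for $\ln j$ weighted by $c_{j}$ (where $(\ln j)/j\leq 1/e$ absorbs the $\ln j$ into a factor of $j$) or for $\ln(jc_{j})$ weighted by $1/j$ (where $-x\ln x\leq 2\sqrt{x}$ and Cauchy--Schwarz close the estimate using $\sum 1/j^{2}<\infty$). Once these tail bounds are established, the remaining finite head is routine.
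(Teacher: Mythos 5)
Your proposal is correct, and it fills in details that the paper itself does not supply: the paper dismisses this lemma as ``follows easily from the points made after Definition~4'' and cites \cite{Ball}, but those points concern only \emph{linear} functionals $\sum_j g_j c_j$ with $g_j=o(j)$, whereas $G$ is nonlinear. Your algebraic rewriting
\[
-c_{j}\ln c_{j}=-\frac{(jc_{j})\ln(jc_{j})}{j}+c_{j}\ln j
\]
is exactly the bridge between the two: the second term is a linear functional with coefficients $\ln j=o(j)$ (which the paper's criterion handles directly), and the first is the nonlinear residual, which you control by $-x\ln x\leq 2\sqrt{x}$ plus Cauchy--Schwarz against $\sum 1/j^{2}<\infty$. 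This is the standard Ball--Carr--Penrose entropy estimate, so you are in the same spirit as the paper's reference even though you carry out the argument explicitly.

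Two small remarks for precision. First, you invoke $-x\ln x\leq 2\sqrt{x}$ ``for $x\in(0,1]$'' but apply it to $x=jc_{j}$, which need not lie in $(0,1]$ even when $c_{j}<1$; however the inequality is in fact valid for all $x>0$ (for $x\geq1$ the left side is $\leq0$), so the estimate survives once you state the bound on the full half-line. Second, the $j=0$ term $c_{0}(\ln c_{0}-1)$ is not controlled by the $X_{1}$-norm $\sum_{j\geq1}j|c_{j}|$, and Definition~3 only asks for pointwise convergence at $j\geq1$; you quietly carry $c_{0}$ as an extra parameter, which is the right thing to do, but it is worth noting that this looseness is inherited from the paper's own statement of the lemma (in the dynamical setting, conservation of total volume supplies the missing control on $c_{0}$). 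Neither point is a gap in your argument.
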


We also quote a preliminary result from \cite{EE} that guarantees the
positivity of the cluster densities.

\begin{proposition}
\label{c-pos} Let $c^{N}$ solve the truncated EDG system (\ref{Tode0}%
)-(\ref{TodeIC}) and $c_{j}^{N}(0)>0$ for some $j.$ Then $c_{j}^{N}(t)>0$ for
any $t>0.$
\end{proposition}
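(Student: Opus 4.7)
The plan is to verify the two standard ingredients of a positivity-preservation proof for quasi-positive ODE systems: first that the non-negative cone is forward-invariant for the truncated flow, and second that strict positivity at an individual index is then preserved by an integrating-factor estimate. Both steps are elementary because (\ref{Tode0})--(\ref{TodeN}) is a \emph{finite} ODE system with locally Lipschitz right-hand side; by standard ODE theory together with the a priori bound from conservation of volume (Lemma~1 with $g_j\equiv 1$), the classical solution $c^N$ exists globally, so one may fix an arbitrary $T>0$ and argue on $[0,T]$.

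\textbf{Step 1 (Non-negativity).} I would rewrite the $j$-th equation in the schematic form
\begin{equation*}
\dot c_j^N \;=\; P_j(c^N) \;-\; R_j(c^N)\,c_j^N,
\end{equation*}
where the gain $P_j(c^N)$ collects precisely the contributions carrying an outer factor $c_{j-1}^N$ or $c_{j+1}^N$, and $R_j(c^N):=\sum_{k=0}^{N-1}K(j,k)c_k^N+\sum_{k=1}^{N}K(k,j)c_k^N$ is the total outflux rate (with obvious modifications at the boundary indices $j=0$ and $j=N$, where one of the two gain terms is absent). On the non-negative cone one has $P_j\geq 0$ and $R_j\geq 0$, and, crucially, $P_j$ does not carry $c_j^N$ as an outer factor --- this is the quasi-positivity property. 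Forward invariance of the non-negative cone follows by a first-crossing argument: if $t^\ast:=\inf\{t>0:c_k^N(t)<0 \text{ for some } k\}$ were finite, at $t^\ast$ an offending coordinate $c_j^N$ would satisfy $c_j^N(t^\ast)=0$ while every other coordinate remains $\geq 0$, so $\dot c_j^N(t^\ast)=P_j(c^N(t^\ast))\geq 0$, contradicting the definition of $t^\ast$. The (nominal) possibility of several coordinates hitting zero simultaneously is handled cleanly by first multiplying through by $\exp(\int_0^t R_j\,ds)$ and Picard-iterating in the positive cone.

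\textbf{Step 2 (Strict positivity).} Once $c^N(t)\geq 0$ on $[0,T]$ is known, conservation of volume gives $\sum_k c_k^N(t)=\sum_k c_k^N(0)$, and the finiteness of the truncation together with the boundedness of $K$ on the finite index set $\{0,\dots,N\}^2$ yield $R_j(c^N(t))\leq C_{N,T}<\infty$ uniformly on $[0,T]$. The variation-of-constants formula applied to $\dot c_j^N=P_j-R_j c_j^N$ then gives
\begin{equation*}
c_j^N(t)\;\geq\; c_j^N(0)\,\exp\!\Bigl(-\!\int_0^t R_j(c^N(s))\,ds\Bigr)\;\geq\; c_j^N(0)\,e^{-C_{N,T}\,t},
\end{equation*}
the production integral having been discarded as non-negative, so $c_j^N(0)>0$ forces $c_j^N(t)>0$ on $[0,T]$; since $T$ was arbitrary this holds for all $t>0$.

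The only genuine subtlety is the simultaneous-crossing issue in Step~1; this is the textbook quasi-positivity obstacle and is resolved cleanly by the integrating-factor reformulation mentioned above. Step~2 is then a one-line Gronwall-type lower bound using the uniform $L^\infty$ control of $R_j$ coming from the conservation law.
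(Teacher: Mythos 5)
Your proof is correct, and the two-step structure (quasi-positivity giving forward invariance of the non-negative cone, then a Gronwall-type lower bound $c_j^N(t)\geq c_j^N(0)\exp(-\int_0^t R_j)$ using the uniform bound on $R_j$ from conservation of total volume) is the standard argument; note that the present paper merely quotes this proposition from \cite{EE} without reproducing a proof, so there is no in-paper proof to compare against, but what you wrote is the argument one expects.

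One remark worth making explicit: as literally stated, the proposition asserts positivity only of the \emph{same} coordinate $j$ that was positive at $t=0$, which is exactly what you prove. The way the result is later used (e.g.\ in the proof of Theorem~\ref{Lyap}, where $\ln(c_j)$ and $\ln(a_jc_j/b_{j+1}c_{j+1})$ appear for all $j$) suggests that a strong-positivity version --- all coordinates $c_k^N(t)>0$ for $t>0$ --- is what is really needed downstream. That stronger statement does not follow from your argument alone: it requires a propagation step (positivity spreads to $j\pm 1$ because the production terms $P_{j\pm 1}$ become strictly positive once $c_j^N>0$, and then inductively to all indices) together with a nondegeneracy hypothesis on the kernel (e.g.\ $K(j,j)=b_ja_j>0$) and the restriction that the initially positive index satisfy $j\geq 1$; if only $c_0^N(0)>0$, the truncated system is frozen and all other coordinates remain zero. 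You are not obligated to prove the stronger version, since the statement as written does not claim it, but it is the kind of thing a referee would ask about given how the proposition is invoked later.
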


Note that the same result holds for the solution $c(t)$ of the original
infinite system (\ref{0-infode})-(\ref{infIC}). Next we need need the
following lemma which will be needed to show that the relative entropy is non-increasing.

\begin{lemma}
\label{D-sign} Let $a_{j},b_{j},c_{j}$ be a sequence of non-negative numbers
with $j\geq0$. Let, for a given integer $N\geq0,$ $A^{N+1}=\sum_{j=0}^{N}%
a_{j}c_{j}$ and $B^{N+1}=\sum_{j=1}^{N+1}b_{j}c_{j}.$ Define $I_{j}%
^{N+1}=a_{j}c_{j}B^{N+1}-b_{j+1}c_{j+1}A^{N+1}$ for $0\leq j\leq N$ and zero
otherwise. Then one has the inequality%
\[
D^{N+1}(c):=-\sum_{j=0}^{N+1}(I_{j-1}^{N+1}-I_{j}^{N+1})\ln(\frac{c_{j}}%
{Q_{j}})\geq0.
\]

\end{lemma}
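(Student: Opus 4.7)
The plan is to recognize the sum as a discrete integration-by-parts and then reduce it to the classical "logarithmic mean" inequality $(u-v)\ln(u/v)\ge 0$.

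First, I would apply Abel summation to $D^{N+1}(c)$. Since $I_{-1}^{N+1}=0$ and $I_{j}^{N+1}=0$ for $j\ge N+1$, shifting the index in one of the two sums gives
\[
D^{N+1}(c)=\sum_{j=0}^{N} I_{j}^{N+1}\bigl[\ln(c_{j+1}/Q_{j+1})-\ln(c_{j}/Q_{j})\bigr].
\]
Next I would use the defining relation $Q_{j+1}/Q_{j}=a_{j}/b_{j+1}$ (stated in \eqref{equi-form}) to rewrite the bracket as $\ln\!\bigl(\tfrac{b_{j+1}c_{j+1}}{a_{j}c_{j}}\bigr)$, so that with the shorthand $u_{j}:=a_{j}c_{j}B^{N+1}$ and $v_{j}:=b_{j+1}c_{j+1}A^{N+1}$ one obtains $I_{j}^{N+1}=u_{j}-v_{j}$ and
\[
D^{N+1}(c)=\sum_{j=0}^{N}(u_{j}-v_{j})\Bigl[\ln(u_{j}/v_{j})-\ln(B^{N+1}/A^{N+1})\Bigr].
\]

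The key observation is then that the $\ln(B^{N+1}/A^{N+1})$ piece contributes nothing. Indeed, directly from the definitions,
\[
\sum_{j=0}^{N}u_{j}=B^{N+1}\sum_{j=0}^{N}a_{j}c_{j}=A^{N+1}B^{N+1},\qquad
\sum_{j=0}^{N}v_{j}=A^{N+1}\sum_{k=1}^{N+1}b_{k}c_{k}=A^{N+1}B^{N+1},
\]
so the telescoping $\sum_{j}(u_{j}-v_{j})=0$ kills the constant factor. What remains is
\[
D^{N+1}(c)=\sum_{j=0}^{N}(u_{j}-v_{j})\ln(u_{j}/v_{j}),
\]
and each summand is non-negative because $(u-v)\ln(u/v)\ge 0$ whenever $u,v\ge 0$ (with the usual conventions $0\ln 0=0$ and $(u-v)\ln(u/v)=+\infty$ when exactly one of $u,v$ vanishes, which is still non-negative). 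This proves the claim.

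The only step that requires real care is the book-keeping in the two conservation identities $\sum u_{j}=\sum v_{j}=A^{N+1}B^{N+1}$, because the index range in $B^{N+1}$ starts at $1$ while the one in $A^{N+1}$ starts at $0$; the shift $k=j+1$ in the $v_{j}$ sum exactly matches these ranges, which is the combinatorial reason why the truncation is consistent. Aside from this, the proof is a direct computation and requires no additional hypothesis on $a_{j},b_{j},c_{j}$ beyond non-negativity and the definition of $Q_{j}$.
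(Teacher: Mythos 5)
Your proof is correct and takes a route genuinely different from (and, in my view, cleaner than) the paper's. The paper proceeds by an explicit recursion in $N$: it relates $R_j^{N+1}$ to $R_j^N$, groups the newly appearing pieces into expressions of the form $(x-y)\ln(y/x)\le0$, and anchors the induction at the single inequality $(a_0c_0-b_1c_1)\ln\!\bigl(\tfrac{c_1 b_1}{c_0 a_0}\bigr)\le0$. You bypass all of this with a one-step Abel summation (exploiting $I_{-1}^{N+1}=I_{N+1}^{N+1}=0$) plus one additional observation the paper does not need: the conservation identity $\sum_{j=0}^{N}I_j^{N+1}=0$, which follows from the index shift $k=j+1$ matching the range $j=0,\dots,N$ in $A^{N+1}$ to the range $k=1,\dots,N+1$ in $B^{N+1}$, and which annihilates the residual constant $\ln(B^{N+1}/A^{N+1})$. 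This reduces $D^{N+1}$ to the manifestly non-negative $\sum_{j=0}^{N}(u_j-v_j)\ln(u_j/v_j)$ and is arguably the more transparent way to see the lemma.

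Two compensating sign slips occur on the way, and you should fix them. The correct result of the summation by parts is
$D^{N+1}(c)=\sum_{j=0}^{N}I_j^{N+1}\bigl[\ln(c_j/Q_j)-\ln(c_{j+1}/Q_{j+1})\bigr]$,
the negative of your first display. And your rewriting of $\ln\!\bigl(\tfrac{b_{j+1}c_{j+1}}{a_jc_j}\bigr)$ as $\ln(u_j/v_j)-\ln(B^{N+1}/A^{N+1})$ is also off by a sign, since in fact $\ln(u_j/v_j)-\ln(B^{N+1}/A^{N+1})=\ln\!\bigl(\tfrac{a_jc_j}{b_{j+1}c_{j+1}}\bigr)$. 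The two errors cancel, so your second display and conclusion are exactly right, but the intermediate lines as written do not follow from one another. The $c_j=0$ edge cases you flag with conventions are moot in the paper's application, since Proposition \ref{c-pos} keeps $c_j(t)>0$ for $t>0$.
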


\begin{proof}
We prove this by recursively summing the terms. Let $I_{j-1}^{N+1}-I_{j}%
^{N+1}=R_{j}^{N+1}$. From the definitions, we can relate $R_{j}^{N}$ and
$R_{j}^{N+1}.$ For the "lower boundary" term ($j=0)$,%
\begin{equation}
R_{0}^{N+1}=R_{0}^{N}+0-(a_{0}c_{0}b_{N+1}c_{N+1}-b_{1}c_{1}a_{N}c_{N}%
)\ln(\frac{c_{0}}{Q_{0}})\text{, \ \ }j=0. \label{R-0}%
\end{equation}
The middle terms are related by%
\begin{equation}
R_{j}^{N+1}=R_{j}^{N}+\left[  (a_{j-1}c_{j-1}b_{N+1}c_{N+1}-b_{j}c_{j}%
a_{N}c_{N})-(a_{j}c_{j}b_{N+1}c_{N+1}-b_{j+1}c_{j+1}a_{N}c_{N})\right]
\ln(\frac{c_{j}}{Q_{j}}). \label{R-j}%
\end{equation}
The "upper boundary" $j=N,N+1$ are then related by%
\begin{equation}
R_{N}^{N+1}=R_{N}^{N}+\left[  (a_{N-1}c_{N-1}b_{N+1}c_{N+1}-b_{N}c_{N}%
a_{N}c_{N})-(a_{N}c_{N}B^{N+1}-b_{N+1}c_{N+1}A^{N+1})\right]  \ln(\frac{c_{N}%
}{Q_{N}}), \label{R-N}%
\end{equation}%
\begin{equation}
R_{N+1}^{N+1}=(a_{N}c_{N}B^{N+1}-b_{N+1}c_{N+1}A^{N+1})\ln(\frac{c_{N+1}%
}{Q_{N+1}}). \label{R-Nplus}%
\end{equation}
Now, for adjacent indices $j,j+1$ we combine the second term (in bracket) of
$j^{th}$ equation with the first term ($j+1)^{th}$ equation which gives%
\begin{equation}
(a_{j}c_{j}b_{N+1}c_{N+1}-b_{j+1}c_{j+1}a_{N}c_{N})\ln(\frac{c_{j+1}}{Q_{j+1}%
}\frac{Q_{j}}{c_{j}}). \label{j-jpl-comb}%
\end{equation}

Next, we expand the $A^{N+1},B^{N+1}$ terms in equations (\ref{R-N}),
(\ref{R-Nplus}) noting that $a_{N}c_{N}B^{N+1}-b_{N+1}c_{N+1}A^{N+1}%
=a_{N}c_{N}B^{N}-b_{N+1}c_{N+1}A^{N}.\,$\ Combining the ($j+1)^{th}$ and terms
in (\ref{R-N}), (\ref{R-Nplus}) (inside the bracket) we get
\begin{equation}
(a_{N}c_{N}b_{j+1}c_{j+1}-b_{N+1}c_{N+1}a_{j}c_{j})\ln(\frac{c_{N+1}}{Q_{N+1}%
}\frac{Q_{N}}{c_{N}}). \label{N-Np-comb}%
\end{equation}

Now, summing over the index $j$ and recalling $\frac{Q_{j+1}}{Q_{j}}%
=\frac{a_{j}}{b_{J+1}}$ the desired sum in the statement of the lemma can be
written as
\begin{align*}
\sum_{j=0}^{N+1}R_{j}^{N+1}\ln(\frac{c_{j}}{Q_{j}})  &  =\sum_{j=0}^{N}%
R_{j}^{N}\ln(\frac{c_{j}}{Q_{j}})-\sum_{j=0}^{N+1}(a_{j}c_{j}b_{N+1}%
c_{N+1}-b_{j+1}c_{j+1}a_{N}c_{N})\ln(\frac{c_{N+1}b_{N+1}a_{j}c_{j}}%
{c_{N}c_{N}b_{j+1}c_{j+1}})\\
&  \leq\sum_{j=0}^{N}R_{j}^{N}\ln(\frac{c_{j}}{Q_{j}}).
\end{align*}
The second line followed since $(x-y)\ln(\frac{x}{y})>0$ for any real number
pairs $x,y\geq0.$ Repeating the arguments for $j\leq N$ and reducing the index
number we find
\[
\sum_{j=0}^{N+1}R_{j}^{N+1}\ln(\frac{c_{j}}{Q_{j}})\leq(a_{0}c_{0}-b_{1}%
c_{1})\ln(\frac{c_{1}}{c_{0}}\frac{b_{1}}{a_{0}})\leq0.
\]
which completes the proof.
\end{proof}

\begin{theorem}
\label{Lyap}Let $a_{j},b_{j}=O(j/\ln j)$ and $c_{j}(t)$ be the solution of
(\ref{0-infode})-(\ref{infIC}). Assume that $c_{j}(0)>0$ for some $j$ and
$0<\lim_{j\rightarrow\infty}(Q_{j})^{1/j}<\infty$ holds$.$ Then%
\begin{equation}
V(c(t))=V(c(0))-\int_{0}^{t}D(c(s))ds \label{V-integ}%
\end{equation}
where $D(c)\geq0$ and is given by%
\begin{equation}
D(c):=\sum_{0}^{\infty}(Ba_{j}c_{j}-Ab_{j+1}c_{j+1})\ln(\frac{a_{j}c_{j}%
}{b_{j+1}c_{j+1}}) \label{D(c)}%
\end{equation}

\end{theorem}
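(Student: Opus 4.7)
The plan is to prove the identity first at the truncated level (\ref{Tode0})--(\ref{TodeIC}), where all sums are finite and Lemma \ref{D-sign} applies directly, and then pass to the limit $N \to \infty$ using the approximation from Theorem \ref{main-exist}. The growth bound $a_j, b_j = O(j/\ln j)$ together with $0 < \lim_{j\to\infty}(Q_j)^{1/j} < \infty$ are exactly what guarantee that $V$ and $D$ are finite on the orbit and that the truncation limit passes; Proposition \ref{c-pos} ensures $c_j^N(t) > 0$ so the logarithms make sense.

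For the truncated system, set $V^N(c^N) := \sum_{j=0}^N c_j^N(\ln(c_j^N/Q_j) - 1)$, so that $\tfrac{d}{dt}V^N(c^N) = \sum_{j=0}^N \ln(c_j^N/Q_j)\, \dot c_j^N$. Writing the truncated ODEs in flux form $\dot c_j^N = I_{j-1}^{N+1} - I_j^{N+1}$ with $I_j^{N+1}$ as in Lemma \ref{D-sign} and zero boundary fluxes, an Abel summation together with the recursion $Q_{j+1}/Q_j = a_j/b_{j+1}$ gives
$$\frac{d}{dt} V^N(c^N) = -\sum_{j=0}^N \bigl(B^{N+1} a_j c_j^N - A^{N+1} b_{j+1} c_{j+1}^N\bigr) \ln\!\Bigl(\frac{a_j c_j^N}{b_{j+1} c_{j+1}^N}\Bigr) =: -D^{N+1}(c^N),$$
which is non-negative by Lemma \ref{D-sign}. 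Integration over $[0,t]$ yields the truncated identity $V^N(c^N(t)) = V^N(c^N(0)) - \int_0^t D^{N+1}(c^N(s))\, ds$.

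To pass to the limit, I would combine the pointwise convergence $c_j^N(t) \to c_j(t)$ (Theorem \ref{main-exist}), the bound $|\ln Q_j| \leq C j$ coming from $0 < \lim_j(Q_j)^{1/j} < \infty$, and the elementary inequality $c|\ln c| \leq c\ln c + 2\sqrt c$ to dominate $\sum c_j^N|\ln(c_j^N/Q_j)|$ by the conserved mass $\rho$ and volume $\eta$; dominated convergence then gives $V^N(c^N(t)) \to V(c(t))$. Since $D^{N+1}(c^N) \geq 0$ and converges pointwise in $s$ to $D(c)$, Fatou's lemma produces $\int_0^t D(c(s))\, ds \leq V(c(0)) - V(c(t))$, and the reverse inequality (hence equality) is obtained by dominated convergence, using $a_j, b_j = O(j/\ln j)$ to construct an integrable majorant for $D^{N+1}(c^N(\cdot))$ expressed in terms of $\rho$. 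This simultaneously yields $D(c) \geq 0$ as a total sum.

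The main obstacle is the uniform integrability needed to upgrade Fatou to equality. The summands of $D$ are not sign-definite individually—the non-negativity is only a global property of the sum via Lemma \ref{D-sign}—so a genuine dominating function must be constructed. The $O(j/\ln j)$ growth rate is calibrated precisely to absorb the logarithmic factor $\ln(a_j c_j/(b_{j+1} c_{j+1}))$ against the finite first moment $\sum j c_j = \rho$, yielding both the absolute convergence of $D(c(s))$ and the envelope required for the limit; anything looser, such as $O(j)$, would leave the logarithmic tails uncontrolled.
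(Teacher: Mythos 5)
Your plan works with the truncated-ODE solutions $c^N$ and then passes to the limit, whereas the paper works with the infinite-system solution $c$ throughout, truncating only the range of the sum in $V^N(c)=\sum_{j=0}^N c_j(\ln(c_j/Q_j)-1)$. With that decomposition the paper gets $\dot V^N(c) = D^N(c) + I_N(c)\ln(c_N/Q_N)$ with an explicit \emph{boundary} term $I_N(c)\ln(c_N/Q_N)$ to kill, and kills it using $a_j,b_j = O(j/\ln j)$ together with $c_j(t)\lesssim 1/j^2$ (available because $M_p(t)<\infty$ for the infinite-system solution by Theorem \ref{main-exist}); the remaining tail $\sum_{j\ge n}a_jc_j\ln(a_jc_j)$ is shown to go uniformly to zero by the same balance of $j/\ln j$ against $\ln j$. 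So the essential estimate is the same as the one you identify, but the organization is different, and that matters. Indeed, the paper's Corollary \ref{Lyap-int} uses precisely your truncated-system route and under the weaker hypothesis $a_j,b_j=O(j)$ only obtains the \emph{inequality} $V(c(t))\le V(c(0))-\int D$ via Fatou and the monotonicity $D^{N+1}\ge D^N$ of Lemma \ref{D-sign}; the fact that the equality is proved by the other route is a hint that upgrading Fatou along yours is delicate.

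Two concrete gaps in your limit passage. First, the dominating function for $V^N(c^N)\to V(c)$: the inequality $c|\ln c|\le c\ln c + 2\sqrt c$ does not give a summable majorant, because $\sum_j\sqrt{c_j^N}$ generically diverges even when $c_j^N\lesssim 1/j^2$. What actually controls this is the weak$\ast$ continuity of $G$ on the mass ball $B_\rho$ (Lemma \ref{G-cont}) together with the decomposition $\ln Q_j = -j\ln z_s + o(j)$ from $0<\lim(Q_j)^{1/j}<\infty$ and conservation of $\sum jc_j^N$; you should cite that machinery rather than a pointwise majorant. Second, the pointwise convergence $D^{N+1}(c^N(s))\to D(c(s))$ already requires $A^N(c^N)\to A(c)$ and $B^N(c^N)\to B(c)$, which are convergences of infinite sums with $a_j=O(j/\ln j)$; these need a tightness argument based on \emph{uniform-in-$N$} higher-moment bounds on the truncated solutions (implicit in the proof of Theorem \ref{main-exist} but not stated), and the same uniform bounds are needed for the majorant of $D^{N+1}(c^N)$ that you postpone. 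By contrast, the paper needs these properties only for the single infinite-system solution $c$, where they are immediate from $M_p(t)<\infty$. Your route can be made to work, but these two points are exactly where the proof lives, and they cannot be waved through.
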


\begin{proof}
Consider the truncation of (\ref{0-infode})-(\ref{infIC}) and define
\[
V^{N}(c)=\sum_{j=0}^{N}c_{j}(\ln(\frac{c_{j}}{Q_{j}})-1).
\]
Differentiating this we get
\begin{equation}
\dot{V}^{N}(c)=\sum_{j=0}^{N}\dot{c}_{j}\ln(\frac{c_{j}}{Q_{j}})=D^{N}%
(c)+I_{N}(c)\ln(\frac{c_{N}}{Q_{N}}). \label{Vn-dot}%
\end{equation}
where $I_{N}(c)=a_{N}c_{N}B(c)-b_{N+1}c_{N+1}A(c)$. Also, by the assumption of
the theorem $a_{j}\leq C\frac{j}{\ln(j)}.$ Now, integrating both sides of
(\ref{Vn-dot}) we have%
\begin{equation}
V^{N}(c(t))=V^{N}(c(0))-\int\sum_{j=0}^{N-1}(a_{j}c_{j}B-b_{j+1}c_{j+1}%
A)\ln(\frac{c_{j+1}b_{j+1}}{a_{j}c_{j}})+\int I_{N}(c)\ln(\frac{c_{N}}{Q_{N}%
}). \label{Vn-int}%
\end{equation}
We need to show that the right hand side of (\ref{Vn-int}) converges to
(\ref{V-integ}). It is obvious that $A,B$ are bounded. Also, for $j$ large
enough we have, \thinspace$c_{j}\leq C/j^{2}$ giving $\left\vert \ln
c_{j}\right\vert \leq C\ln(j)$. These imply, for the second integrand in
(\ref{Vn-int}), we have
\[
\left\vert \left(  a_{N}c_{N}B(c)-b_{N+1}c_{N+1}A(c)\right)  \ln(\frac{c_{N}%
}{Q_{N}})\right\vert \leq C\frac{Nc_{N}}{\ln N}\left\vert \ln(\frac{c_{N}%
}{Q_{N}})\right\vert \leq CNc_{N}.
\]
Since $Nc_{N}(t)$ converges uniformly to zero on finite intervals the second
integral in (\ref{Vn-int}) vanishes in the limit $N\rightarrow\infty$. To
prove the claim of the theorem we need to show that the remaining integrand on
the right hand side of (\ref{Vn-int}) converges to (\ref{D(c)}). It is
sufficient to show that the term $\sum_{j=n}^{\infty}a_{j}c_{j}\ln(c_{j}%
a_{j})$ goes uniformly to zero (the other terms in can be done similarly).
\begin{align}
\left\vert \sum_{j=n}^{\infty}a_{j}c_{j}\ln(c_{j}a_{j})\right\vert  &
\leq\sum_{j=n}^{\infty}a_{j}c_{j}\left\vert \ln(c_{j})\right\vert +\sum
_{j=n}^{\infty}a_{j}c_{j}\left\vert \ln(a_{j})\right\vert \\
&  \leq\sum_{j=n}^{\infty}C\frac{j}{\ln(j)}c_{j}\ln(j)+\sum_{j=n}^{\infty
}C\frac{j}{\ln(j)}c_{j}\left\vert \ln(C\frac{j}{\ln(j)})\right\vert .
\end{align}

Clearly, the right hand side in the second line uniformly goes to zero and
hence (\ref{Vn-int}) does converge to (\ref{V-integ}). Finally, the
non-negativity of $D(c)$ follows as we notice that the right hand side of
(\ref{Vn-dot}) has exactly the same structure as in Lemma (\ref{D-sign}) which
is non-negative. This completes the proof.
\end{proof}

For the integral equality (\ref{V-integ}) the bounds on the export and import
rates $a_{j},b_{j}=O(j/\ln j)$ were essential while they are not needed for
the well posedness as discussed in Section 2. It would be nice, therefore, to
have a similar result for the more general case when $a_{j},b_{j}=O(j)$ only.
The following corollary provides that.

\begin{corollary}
\label{Lyap-int}Let $a_{j},b_{j}=O(j)$. Assume that $c_{j}(0)>0$ and
$0<\lim_{j\rightarrow\infty}(Q_{j})^{1/j}<\infty$ holds$.$ Let $c_{j}(t)$ be
the solution of (\ref{0-infode})-(\ref{infIC}). Then%
\begin{equation}
V(c(t))\leq V(c(0))-\int_{0}^{t}D(c(s))ds.
\end{equation}

\end{corollary}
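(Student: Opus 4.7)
The plan is to obtain the inequality by working with the truncated ODE system (\ref{Tode0})--(\ref{TodeIC}), for which the analogue of the Lyapunov identity holds exactly without any boundary contribution, and then passing to the limit using Fatou's lemma. This circumvents the delicate vanishing of the term $I_N(c)\ln(c_N/Q_N)$ used in Theorem \ref{Lyap}, which required the stronger bound $a_j, b_j = O(j/\ln j)$.

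First I would let $c^N(t)$ denote the solution of (\ref{Tode0})--(\ref{TodeIC}) with initial data $c_j^N(0) = c_j(0)$ for $j\le N$. Repeating the computation of Theorem \ref{Lyap} along the truncated flow, one finds that no boundary term appears: the top equation reads $\dot c_N^N = I_{N-1}^N$ (no outflow across $j=N$), so summation by parts yields the exact identity
\begin{equation*}
V^N(c^N(t)) = V^N(c^N(0)) - \int_0^t D^N(c^N(s))\,ds,
\end{equation*}
where $D^N(c^N) = -\sum_{k=0}^{N-1} I_k^N \ln\bigl(b_{k+1}c_{k+1}^N/(a_k c_k^N)\bigr) \ge 0$ by Lemma \ref{D-sign}. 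Next I would pass to the limit $N\to\infty$. The construction in Theorem \ref{main-exist} yields $c_j^N(t)\to c_j(t)$ uniformly on compact time intervals for each $j$. Combined with the finiteness of $V(c(t))$ (guaranteed by $0 < \lim_j Q_j^{1/j} < \infty$ and $\sum j c_j(t) < \infty$) and the bound $a_j, b_j = O(j)$, dominated convergence gives $V^N(c^N(0))\to V(c(0))$, $V^N(c^N(t))\to V(c(t))$, $A^N(s) \to A(s)$, $B^N(s) \to B(s)$, and hence $D^N(c^N(s))\to D(c(s))$ pointwise in $s$. Applying Fatou's lemma to the non-negative integrands $D^N(c^N(s))$ then yields
\begin{equation*}
V(c(t)) + \int_0^t D(c(s))\,ds \le \lim_{N\to\infty} V^N(c^N(t)) + \liminf_{N\to\infty}\int_0^t D^N(c^N(s))\,ds = \lim_{N\to\infty} V^N(c^N(0)) = V(c(0)),
\end{equation*}
which is the claimed inequality.

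The main obstacle is the pointwise convergence $D^N(c^N(s))\to D(c(s))$ under the weaker assumption $a_j, b_j = O(j)$. The partial sums $A^N$, $B^N$ and the sum defining $D^N$ each involve weights of order $j$, so the tail contributions must be controlled uniformly in $N$; this is where conservation of $\sum_j j c_j^N(s)$ becomes crucial, giving uniform integrability of $\{j c_j^N(s)\}_N$ and thereby allowing dominated convergence in the index $j$. The second technical point, $V^N(c^N(t))\to V(c(t))$, is handled by splitting into a head sum (where DCT applies term-by-term thanks to the locally uniform convergence of $c_j^N$) and a vanishing tail (by the absolute convergence of the series for $V(c(t))$). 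The whole scheme only needs a one-sided comparison, which is precisely why the weaker $O(j)$ growth suffices for the inequality even though it is insufficient to make the boundary term $I_N \ln(c_N/Q_N)$ in the proof of Theorem \ref{Lyap} vanish.
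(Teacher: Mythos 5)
Your overall scheme (exact Lyapunov identity on the truncated flow, then pass to the limit via Fatou) is the same as the paper's, but the crucial step is not justified. You need the one-sided bound $\liminf_{N}D^{N-1}(c^N(s))\geq D(c(s))$ to apply Fatou in the correct direction, and you propose to obtain it from full pointwise convergence $D^{N}(c^{N}(s))\to D(c(s))$ by ``dominated convergence in the index $j$,'' citing the uniform integrability of $\{jc^N_j(s)\}_N$. That domination fails precisely here: the summands of $D$ are of the form $(Ba_jc_j-Ab_{j+1}c_{j+1})\ln\bigl(a_jc_j/(b_{j+1}c_{j+1})\bigr)$, which contain $\ln c_j$ factors, so under $a_j,b_j=O(j)$ a typical term behaves like $jc_j\ln j$, and this is \emph{not} controlled by any $j$-uniformly-integrable envelope even when $\sum jc_j$ is fixed (take $c_j\sim 1/(j^2\ln^2 j)$: $\sum jc_j<\infty$ but $\sum jc_j\ln j=\infty$). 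This is exactly the obstruction that forced the stronger hypothesis $a_j,b_j=O(j/\ln j)$ in Theorem \ref{Lyap}; if your DCT-in-$j$ argument worked under $O(j)$, it would upgrade the corollary to the equality of Theorem \ref{Lyap} as well, which is not available.

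The paper's proof fills this gap without any tail control. Its key tool is the monotonicity in the truncation level contained in the proof of Lemma \ref{D-sign}: $D^{M}(c)\geq D^{n}(c)$ whenever $M\geq n$, for any non-negative sequence $c$. Fixing $n$ and taking the subsequence $N(k)$ of approximating solutions gives $D^{N(k)-1}(c^{N(k)}(s))\geq D^{n}(c^{N(k)}(s))$, and the right-hand side is a \emph{finite} sum, so it converges to $D^{n}(c(s))$ with no tail issues. Fatou then yields $V(c(t))\leq V(c(0))-\int_0^t D^n(c(s))\,ds$ for every fixed $n$, and letting $n\to\infty$ (monotone convergence of the non-negative, non-decreasing $D^n$) gives the claim. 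Your Fatou application only ever needed this one-sided $\liminf$ inequality, not full convergence of $D^{N}(c^{N})$; replacing the unsupported DCT claim by the Lemma \ref{D-sign} monotonicity is both necessary and sufficient to close the argument.
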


\begin{proof}
Take the truncated system (\ref{Tode0})-(\ref{TodeIC}) and the approximation
$V^{N}$%
\begin{equation}
V^{N}(c^{N}(t))=V(c^{N}(0))-\int\sum_{j=0}^{N-1}(a_{j}c_{j}^{N}B^{N}%
(c^{N})-b_{j+1}c_{j+1}^{N}A^{N}(c^{N}))\ln(\frac{c_{j+1}^{N}b_{j+1}}%
{a_{j}c_{j}^{N}}).
\end{equation}
Fix $n\in\mathbb{N}$ and consider the subsequence $N(k)>n~$which converges to
the solution of the original EDG system. By Lemma \ref{D-sign} $D^{N(k)-1}%
(c^{N(k)})\geq D^{n}(c^{N(k)}).$ Then, since $\lim_{N(k)\rightarrow\infty
}V(c^{N(k)}(0))=V(c(0))$ one has
\[
\lim\inf D^{N(k)-1}(c^{N(k)})\geq\lim_{N(k)\rightarrow\infty}D^{n}%
(c^{N(k)})=D^{n}(c).
\]
Also, by the condition $0<\lim_{j\rightarrow\infty}(Q_{j})^{1/j}<\infty$ and
the strong convergence of $c^{N(k)}(t)$ to $c(t)$ (mass conservation and Lemma
\ref{mag2norm}) and Proposition \ref{G-cont}\ we have%
\begin{align*}
V(c(t))  &  =\lim_{N(k)\rightarrow\infty}V(c^{N(k)}(t))=\lim_{N(k)\rightarrow
\infty}V(c^{N(k)}(0))-\lim\inf\int D^{N(k)-1}(c^{N(k)}(s))ds\\
&  \leq V(c(0))-\int_{0}^{t}D^{n}(c(s))ds.
\end{align*}
Passing to the limit $n\rightarrow\infty$ yields the result.
\end{proof}

For the asymptotic behavior we will study the positive orbit of the flow
$O^{+}(\phi)=\cup_{t\geq0}\phi(t)$ where $\phi(t)=T(t)c(0).$ We define the
$\omega-$limit set by $\omega(\phi)=\{x\in X:\phi(t_{j})\rightarrow x$ for
some sequence $t_{j}\}$. We quote the following result from the general theory
which is standard.

\begin{proposition}
Suppose that $O^{+}(\phi)$ is relatively compact. Then $\omega(\phi)$ is non
empty, quasi-invariant and $\lim_{t\rightarrow\infty}dist(\phi(t),\omega
(\phi))=0.$
\end{proposition}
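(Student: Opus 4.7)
The plan is to carry out the standard three-part argument for $\omega$-limit sets in a metric space, relying on the relative compactness hypothesis for non-emptiness and attraction, and on property $(iii)$ of the generalized flow (Definition 5), which has been verified for our system in the preceding propositions, to get quasi-invariance.

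First I would establish non-emptiness. Pick any sequence $t_n\to\infty$; by relative compactness of $O^{+}(\phi)$, the sequence $\phi(t_n)$ has a subsequence converging in $X$ to some $x$, and by definition $x\in\omega(\phi)$. For the attraction statement $\lim_{t\to\infty}dist(\phi(t),\omega(\phi))=0$ I would argue by contradiction: if it failed there would exist $\varepsilon>0$ and $t_n\to\infty$ with $dist(\phi(t_n),\omega(\phi))\geq \varepsilon$, but relative compactness yields a subsequence $\phi(t_{n_k})\to y$, necessarily $y\in\omega(\phi)$, contradicting the separation bound.

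Third, and this is where the generalized flow structure is actually used, I would prove quasi-invariance: given $x\in\omega(\phi)$ one must produce a $\psi\in G$ with $\psi(0)=x$ and $\psi(s)\in\omega(\phi)$ for every $s\geq 0$. Choose $t_n\to\infty$ with $\phi(t_n)\to x$ and consider the shifted orbits $\phi^{(n)}(s):=\phi_{t_n}(s)=\phi(t_n+s)$, which by property $(i)$ of Definition 5 belong to $G$ and satisfy $\phi^{(n)}(0)=\phi(t_n)\to x$. Property $(iii)$ then supplies a subsequence $n_k$ and an element $\psi\in G$ with $\psi(0)=x$ and $\phi^{(n_k)}(s)\to\psi(s)$ uniformly on compact subintervals of $[0,\infty)$. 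Fixing $s\geq 0$, since $t_{n_k}+s\to\infty$ and $\phi(t_{n_k}+s)\to\psi(s)$, the limit $\psi(s)$ lies in $\omega(\phi)$.

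The main obstacle is the quasi-invariance step: one must be careful that the family of time-shifts $\{\phi_{t_n}\}$ lies in $G$ (guaranteed by $(i)$) and has initial data converging in the ambient metric (guaranteed by the choice of $t_n$), so that $(iii)$ genuinely applies and yields a limit trajectory $\psi$ defined for all $s\geq 0$. The remaining two parts are immediate from compactness and do not require any structural hypothesis on the dynamics beyond continuity of $\phi$.
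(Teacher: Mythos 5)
The paper simply quotes this proposition from the general theory of generalized flows (essentially Ball--Carr--Penrose) without giving a proof, so there is no in-paper argument to compare against. Your proof is the standard one and is correct: non-emptiness and attraction follow purely from relative compactness and a subsequence extraction, and quasi-invariance is correctly obtained by applying properties $(i)$ and $(iii)$ of Definition~5 to the time-shifted orbits $\phi_{t_n}$, whose initial data converge to the given $\omega$-limit point, yielding a limit trajectory $\psi\in G$ that remains in $\omega(\phi)$.
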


We can now prove the main theorems of this section. The first theorem below
shows the weak$\ast$ convergence under fairly general conditions.

\begin{theorem}
\label{weak-conv}Consider the system (\ref{infode})-(\ref{infIC}) with
$K(j,k)=b(j)a(k)$ and $V(c(0))<\infty.$ Let $a_{j},b_{j}=O(j/\ln j)$ for large
$j.$ Let the initial density be given $\rho_{0}=\sum_{k=1}^{\infty}%
kc_{k}(0)<\infty$ and assume also that $\lim_{j\rightarrow\infty}\frac
{b_{j+1}}{a_{j}}=z_{s}$ $(0<z_{s}<\infty).$ Then $c(t)\rightharpoonup^{\ast
}c^{\rho}$ for some $\rho$ with $0\leq\rho\leq\min(\rho_{0},\rho_{s}).$
\end{theorem}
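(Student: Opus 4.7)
The plan is to run a LaSalle-type invariance argument in the weak$\ast$ topology of $X_1$, following Ball's template for Becker--D\"oring. Mass conservation (Theorem \ref{main-exist}) forces $c(t)\in B_{\rho_0}$ for all $t$, and this ball is weak$\ast$ compact and metrizable under $\mathrm{dist}$; since (\ref{0-infode})--(\ref{infIC}) generates a semigroup (Theorem \ref{semiG}), the $\omega$-limit set $\omega(c_0)$ is nonempty and quasi-invariant with $\mathrm{dist}(c(t),\omega(c_0))\to 0$.

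I would then use the modified entropy $V_{z_s,y}(c)=V(c)-\ln z_s \sum_{j\ge 1} jc_j - \ln y \sum_{j\ge 0}c_j$ for arbitrary fixed $y>0$. The hypothesis $\lim_j b_{j+1}/a_j=z_s\in(0,\infty)$ forces $\lim(Q_j)^{1/j}=z_s^{-1}$ by the ratio test, so Proposition \ref{V-cont} makes $V_{z_s,y}$ weak$\ast$ continuous on $B_{\rho_0}$. Conservation of mass and volume keeps the two correction terms constant along trajectories, so Corollary \ref{Lyap-int} becomes
\[
V_{z_s,y}(c(t))\le V_{z_s,y}(c(0))-\int_0^t D(c(s))\,ds,
\]
giving a non-increasing $V_{z_s,y}\circ c$, bounded below under $0<\lim(Q_j)^{1/j}<\infty$, so a limit $L_y$ exists.

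For any $c^*\in\omega(c_0)$, weak$\ast$ continuity gives $V_{z_s,y}(c^*)=L_y$, and quasi-invariance propagates this along the whole forward orbit of $c^*$, forcing $\int_0^\infty D(c^*(s))\,ds=0$. Writing $D(c^*)=A^*B^*\sum_j(p_j-q_j)\ln(p_j/q_j)$ with $p_j=a_jc^*_j/A^*$ and $q_j=b_{j+1}c^*_{j+1}/B^*$, the vanishing of this nonnegative symmetric (Jeffreys) divergence yields $p=q$, i.e.\ detailed balance, so $c^*_j=Q_jz^jc^*_0$ with $z=B^*/A^*\le z_s$ (summability). Weak$\ast$ lower semicontinuity of mass bounds $\sum jc^*_j\le\rho_0$, and $z\le z_s$ bounds it by $\rho_s$, so every element of $\omega(c_0)$ is an equilibrium $c^\rho$ with $\rho\le\min(\rho_0,\rho_s)$.

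The main obstacle is upgrading this subsequential description to a genuine limit, i.e.\ showing $\omega(c_0)$ is a single point. For this I would exploit the whole one-parameter family of Lyapunov functions. Volume conservation gives $V_{z_s,y}(c(t))=V_{z_s,1}(c(t))-\eta_0\ln y$, so $L_y=L_1-\eta_0\ln y$; matching $\ln y$-coefficients in $V_{z_s,y}(c^*)=L_y$ forces $\sum c^*_j=\eta_0$, so $\omega(c_0)$ is contained in the slice of equilibria of prescribed volume $\eta_0$. On this slice a short implicit-differentiation computation using $\rho=y_0\sum jQ_jz^j$ and $\eta_0=y_0\sum Q_jz^j$ yields $\partial_\rho V(c^\rho)=\ln z(\rho,\eta_0)$, hence $\partial_\rho V_{z_s,y}(c^\rho)=\ln(z/z_s)<0$ for $\rho<\rho_s$. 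Thus $\rho\mapsto V_{z_s,y}(c^\rho)$ is strictly monotone, $L_y$ determines $\rho$ uniquely, $\omega(c_0)=\{c^\rho\}$, and full weak$\ast$ convergence follows.
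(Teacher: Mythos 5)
Your proposal follows the same LaSalle/invariance-principle route as the paper: weak$\ast$ compactness of $B_{\rho_0}$, the modified entropy $V_{z_s,\cdot}$ made weak$\ast$ continuous via Proposition \ref{V-cont}, its monotone decay, and identification of $\omega$-limit points as detailed-balance equilibria from $D=0$. The rewriting of $D(c)$ as the Jeffreys divergence $A^*B^*\sum_j(\tilde p_j-\tilde q_j)\ln(\tilde p_j/\tilde q_j)$ with $\tilde p_j=a_jc_j/A$, $\tilde q_j=b_{j+1}c_{j+1}/B$ is a clean way to extract $\tilde p=\tilde q$, equivalent to the recursion \eqref{equi-form}.

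Where you genuinely add something is the last paragraph. The paper passes directly from ``$\omega(c)$ consists of equilibria $c^{\rho,\eta}$'' to ``$dist(c(t),c^{\rho})\to 0$'' via the quoted proposition, which only gives $dist(c(t),\omega(c))\to 0$ and does not by itself force $\omega(c)$ to be a single point, since the admissible equilibria a priori form a one-parameter family. You close this gap: constancy of $V_{z_s,y}$ on $\omega(c)$ together with the computation $\partial_\rho V(c^\rho)=\ln z(\rho,\eta_0)$ (hence $\partial_\rho V_{z_s,y}(c^\rho)=\ln(z/z_s)<0$ for $\rho<\rho_s$) pins down $\rho$ uniquely, and uniqueness of equilibria at fixed $(\rho,\eta_0)$ then collapses $\omega(c)$ to a singleton. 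Two small remarks: (i) the $\ln y$-coefficient-matching step to get $\sum_j c^*_j=\eta_0$ is correct but can be skipped, since $c\mapsto\sum_j c_j$ has coefficients $g_j=1=o(j)$ and so is already weak$\ast$ continuous by the criterion stated after Definition 4; (ii) under the hypothesis $a_j,b_j=O(j/\ln j)$ you could invoke Theorem \ref{Lyap} to get equality in the entropy identity, as the paper does, rather than the one-sided Corollary \ref{Lyap-int}, though the inequality suffices for the argument.
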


\begin{proof}
Consider the function $V_{z_{s},y_{s}}(c).$ From Proposition \ref{V-cont} it
is continuous on $B_{\rho_{0}}.$ Also since total mass density $\sum
_{k=1}^{\infty}kc_{k}(t)$ and total number densities $\sum_{k=1}^{\infty}%
c_{k}(0)$ are conserved by Theorem \ref{Lyap} we have%
\[
V_{z_{s},y_{s}}(c(t))=V_{z_{s},y_{s}}(c(0))-\int_{0}^{t}D(c(s))ds
\]
Boundedness of $\sum_{k=1}^{\infty}kc_{k}(t)$ also implies that $O^{+}(c)$ is
relatively compact in $B_{\rho_{0}}$. By the invariance principle $\omega(c)$
is non empty and consists of points $V_{z_{s},y_{s}}(c)=const$ which implies
that, for any element in $\bar{c}\in\omega(c),$ $D(\bar{c})=0$ and hence
$\bar{c}$ has the form $\bar{c}_{r}=Q_{j}\left(  \frac{B(\bar{c})}{A(\bar{c}%
)}\right)  ^{j}\bar{c}_{0}(t)$ for some $\bar{c}(0)\in\omega(c).$ But, this is
exactly the form of equilibrium solutions. Since the mass density cannot
increase it follows that $\omega(c)$ consists of equilibria $c^{\rho,\eta}$
with $0<\rho\leq\rho_{s}.$ By the previous proposition $dist(c(t),c^{\rho
})\rightarrow0$ as $t\rightarrow0,$ completing the proof$.$
\end{proof}

We can strengthen the theorem for the subcritical case by making further
assumptions on the strength of "export" tendency over the "import" in the
system. More precisely, let%
\[
(H1)\text{ }\lim_{j\rightarrow\infty}\frac{a_{j}}{b_{j+1}}=0
\]
hold. Then we can prove the following strong convergence result.

\begin{theorem}
Let $c_{j}(t)$ solve the system (\ref{0-infode})-(\ref{infIC}). Assume that
$a_{j},b_{j}=O(j/\ln j)$ and H1 holds$.$ Then $c(t)\rightarrow c^{\rho}$
strongly in $X_{1}.$
\end{theorem}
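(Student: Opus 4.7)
The plan is to upgrade the weak$\ast$ convergence of Theorem~\ref{weak-conv} to strong convergence in $X_1$ by showing that the orbit $\{c(t):t\geq 0\}$ is relatively compact in the strong topology, so that any weak$\ast$ subsequential limit is automatically a strong limit via Lemma~\ref{mag2norm}. Observe first that H1 gives $\lim_{j\to\infty}Q_{j+1}/Q_j=\lim_{j\to\infty}a_j/b_{j+1}=0$, so $z_s=\infty$, $\rho_s=\infty$, $Z:=\sum_j Q_j<\infty$, and a unique equilibrium $c^{\rho_0,\eta_0}$ exists for the given initial mass and number. Strictly, Theorem~\ref{weak-conv} is stated under $0<z_s<\infty$ and does not apply verbatim; however, the tightness we establish below supplies relative compactness in the strong topology, and the invariance-principle argument can then be run directly in $X_1$.

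The \emph{main technical step} is a uniform tail estimate. With the nonnegative function $g(x)=x\ln x-x+1$, a direct rearrangement gives
\[
V(c)+Z \;=\; \sum_{j\geq 0} Q_j\, g\!\left(\frac{c_j}{Q_j}\right) \;\geq\;0,
\]
so Theorem~\ref{Lyap} yields the uniform bound $\sum_j Q_j g(c_j(t)/Q_j)\leq V(c(0))+Z=:C_0$. Under H1 we may strengthen the decay: for every $\lambda>0$ there is $J(\lambda)$ with $-\ln Q_j\geq\lambda j$ for all $j\geq J(\lambda)$, and in particular $Q_j<1$ eventually. Split the tail at $c_j\gtrless Q_j^{1/2}$. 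On $\{c_j\geq Q_j^{1/2}\}$ one has $\ln(c_j/Q_j)\geq\tfrac{1}{2}|\ln Q_j|$, and for $j$ large enough that $|\ln Q_j|\geq 4$,
\[
Q_j\,g(c_j/Q_j)\;=\;c_j\ln(c_j/Q_j)-c_j+Q_j\;\geq\;\tfrac{1}{2}c_j|\ln Q_j|-c_j\;\geq\;\tfrac{1}{4}c_j|\ln Q_j|\;\geq\;\tfrac{\lambda}{4}\,j\,c_j,
\]
so that $\sum_{j\geq J(\lambda),\,c_j\geq Q_j^{1/2}} j\,c_j(t)\leq 4C_0/\lambda$. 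On the complement $\{c_j<Q_j^{1/2}\}$ we simply have $jc_j(t)<jQ_j^{1/2}$, whose tail is super-exponentially small since $Q_{j+1}^{1/2}/Q_j^{1/2}=\sqrt{a_j/b_{j+1}}\to 0$. Choosing $\lambda$ large and then $N\geq J(\lambda)$ large yields, for every $\epsilon>0$, an $N$ with $\sum_{j\geq N} j c_j(t)<\epsilon$ uniformly in $t\geq 0$.

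This uniform tightness together with the conserved bound $\sum jc_j(t)=\rho_0$ implies relative compactness of the orbit in $X_1$. Pick any sequence $t_n\to\infty$ and a strong subsequential limit $c(t_{n_k})\to c^*$ in $X_1$; conservation of mass and number (Theorem~\ref{main-exist}) pass to the limit, giving $\sum j c^*_j=\rho_0$ and $\sum c^*_j=\eta_0$. The identity $V(c(t))=V(c(0))-\int_0^t D(c(s))\,ds$ of Theorem~\ref{Lyap} combined with $V(c)\geq-Z$ forces $\int_0^\infty D(c(s))\,ds<\infty$; then a standard $\omega$-limit argument using the semigroup of Theorem~\ref{semiG} produces $D(c^*)=0$, so $c^*$ is a detailed balance equilibrium. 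By uniqueness of the equilibrium at prescribed $(\rho_0,\eta_0)$, $c^*=c^{\rho_0,\eta_0}$, and since every subsequential limit coincides, the full orbit converges strongly to $c^{\rho_0,\eta_0}$ in $X_1$.

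The \emph{main obstacle} will be making the tail estimate rigorous, since $\sum c_j\ln c_j$ has no definite sign and the coefficient $-\ln Q_j$ in $V$ grows super-linearly under H1. The rewriting $V+Z=\sum Q_j g(c_j/Q_j)$ turns this into a manifestly nonnegative sum and is what makes the splitting at $c_j=Q_j^{1/2}$ work. A secondary care point is that under H1 neither $V$ nor $D$ is weak$\ast$ continuous (again because $\ln Q_j$ is super-linear), so the invariance-principle part must be carried out entirely in the strong topology of $X_1$, with continuity of $D$ used only along sequences already confined to the (strongly) relatively compact orbit.
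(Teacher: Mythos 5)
Your proof follows the same overall route as the paper: monotonicity of the relative entropy $V$, uniform tightness of $\{jc_j(t)\}$ obtained from the superlinear growth of $-\ln Q_j$ under H1, and the invariance principle run in the strong $X_1$ topology. The only real difference is in how the tail estimate is executed. The paper combines $V(c(t))\le V(c(0))$ with a uniform lower bound for $G(c(t))$ on bounded subsets of $X_1^+$ (the standard fact cited from \cite{Ball}) to deduce $-\sum_j c_j(t)\ln Q_j\le C$ uniformly in $t$, and then applies a de la Vall\'ee-Poussin argument using $-\ln(Q_j)^{1/j}\to\infty$. You instead rewrite $V+Z=\sum_j Q_j\,g(c_j/Q_j)$ as a manifestly nonnegative sum and extract the tail bound by splitting at $c_j=Q_j^{1/2}$; this sidesteps the separate lower bound on $G$ and makes the tightness estimate self-contained, which is a small but genuine gain in rigor. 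You are also right to flag that the weak$\ast$ machinery (Theorem~\ref{weak-conv}, Proposition~\ref{V-cont}) does not apply here, since H1 forces $z_s=\infty$; the paper leaves this implicit. One loose end shared with the paper: the statement does not explicitly assume $V(c(0))<\infty$, and the monotonicity $V(c(t))\le V(c(0))$ is drawn from Theorem~\ref{Lyap} / Corollary~\ref{Lyap-int}, whose hypotheses include $0<\lim_{j\to\infty}(Q_j)^{1/j}<\infty$ --- violated under H1 since the limit is $0$. Strictly, one must re-derive the monotonicity in the regime $\lim(Q_j)^{1/j}=0$ (the truncation argument of Corollary~\ref{Lyap-int} does adapt, with the strong convergence of the truncations and Fatou playing the same role), but neither your write-up nor the paper's addresses this explicitly.
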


\begin{proof}
$H1$ implies that the radius of convergence of the series $z_{s}=\infty$ which
is equivalent to $\lim_{j\rightarrow\infty}(Q_{j})^{1/j}=0.$ By the
monotonicity of $V(c)$ one has $V(c(t))\leq V(c(0))$. Also, by Proposition
\ref{G-cont} $\sum_{j=0}^{\infty}c_{j}(t)\ln(c_{j})<\infty.$ Hence we have
\[
-\sum_{j=0}^{\infty}jc_{j}(t)\ln(Q_{j})^{1/j}\leq C.
\]

Since $-\ln\left(  (Q_{j})^{1/j}\right)  \rightarrow\infty$ by $H1,$ it
follows that $O^{+}(c)$ is relatively compact in $X_{1}$ and the by the
invariance principle the $c(t)$ converges strongly to a distribution in
$\omega(c)$ in the form $\bar{c}_{j}(t)=Q_{j}\left(  \frac{B(\bar{c})}%
{A(\bar{c})}\right)  ^{j}\bar{c}_{0}(t)$ where $\bar{c}_{j}(0)=\lim
_{j\rightarrow\infty}c_{j}(t)$ has the form of equilibrium solutions$.$ By the
conservation of number and mass density in time, i.e., $\sum_{j=0}^{\infty
}\bar{c}_{j}(t)=\eta,$ $\sum_{j=1}^{\infty}j\bar{c}_{j}(t)=\rho_{0}$ and the
uniqueness of equilibrium solutions one concludes that $\omega(c)$ consists of
single point, that is, the equilibrium solutions that correspond to the pair
$(\rho,\eta).$
\end{proof}

If the \textit{exporting} and \textit{importing} tendencies are comparable as
in Remark 1 Case $(ii)$, then the above argument does not work and we need
extra conditions to secure the strong convergence. We will need to control the
moments of the initial distribution and crucially make use of a uniform
comparison of $b_{j},a_{j}$ which will replace $(H1)$ i.e.,%
\[
(H2)\text{ \ }\frac{b_{j}}{a_{j}}\geq z_{s}\text{ \ for }j\geq1.
\]

\begin{theorem}
\label{qual-conv2-str}Let $c_{j}(t)$ solve the system (\ref{0-infode}%
)-(\ref{infIC}) and $\rho_{0}\leq\rho_{s}.$ Let $b_{j}\geq Cj^{\lambda}$
$(1>\lambda>0)$ and $\sum_{j=0}^{\infty}j^{p}c_{j}(0)<\infty$ for some
$p>2-\lambda.$ Assume further that H2 holds and $a_{j},b_{j}=O(j/\ln j).$ Then
$c(t)\rightarrow c^{\rho}$ strongly in $X_{1}.$
\end{theorem}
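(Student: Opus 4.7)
The plan is to upgrade the weak$\ast$ convergence already established in Theorem \ref{weak-conv} to strong convergence in $X_1$, by ruling out the escape of mass to infinity through propagation of a higher moment. Since Theorem \ref{weak-conv} applies (we have $a_j,b_j = O(j/\ln j)$ and $\lim b_{j+1}/a_j = z_s$ finite by (H2) together with the upper bound $a_j \le Cj/\ln j$), we know $c(t)\rightharpoonup^{\ast} c^{\rho}$ for some $\rho \le \rho_0 \le \rho_s$. Because mass is conserved along the flow ($\|c(t)\|_1 = \rho_0$), Lemma \ref{mag2norm} tells us that strong convergence is equivalent to the statement $\rho = \rho_0$, i.e.\ no portion of the mass disappears in the weak$\ast$ limit.

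My next step is to establish uniform tightness of the mass through the bound $\sup_{t\ge 0} M_p(t) < \infty$ for the exponent $p > 2-\lambda$ given in the hypothesis, using the $p$-th moment of the initial data as a starting reservoir. Working at the level of the truncated system (\ref{Tode0})--(\ref{TodeIC}) and applying Lemma 1 with $g_j = j^p$, the moment evolution reads
\begin{equation*}
\frac{d}{dt} M_p^N = B^N \sum_{j=0}^{N-1}\bigl((j+1)^p - j^p\bigr) a_j c_j^N - A^N \sum_{j=1}^{N}\bigl(j^p - (j-1)^p\bigr) b_j c_j^N + \text{boundary terms}.
\end{equation*}
The positive term is controlled, via the upper bound $a_j \le Cj/\ln j$ and $(j+1)^p-j^p \le p(j+1)^{p-1}$, by a quantity of order $B^N \sum j^p c_j^N/\ln j$. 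The dissipative term is bounded below, using $b_j \ge Cj^\lambda$ and $j^p-(j-1)^p \ge p(j-1)^{p-1}$, by a quantity of order $A^N \sum j^{p+\lambda-1} c_j^N = A^N M_{p+\lambda-1}^N$. The key algebraic point is that the exponent $p+\lambda-1$ strictly exceeds $1$ precisely because $p>2-\lambda$, so $M_{p+\lambda-1}^N$ dominates the mass $M_1^N = \rho_0$ by a Jensen/Hölder interpolation and captures the full tail of $M_p^N$ up to a logarithmic factor. Combined with the two-sided boundedness of $A^N, B^N$ that follows from $a_j,b_j = O(j/\ln j)$ and $\|c^N(t)\|_1 \le \rho_0$, plus the use of (H2) $b_j \ge z_s a_j$ to recombine the positive and negative sums into a near-cancellation at large $j$, this yields a differential inequality of the Gronwall type that produces a uniform-in-time bound on $M_p^N$ depending only on $M_p(0)$, $\rho_0$, $\eta$, and the kernel constants. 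Passing $N\to\infty$ by the same approximation scheme used in Corollary \ref{Lyap-int} (strong convergence $c^N \to c$ in $X_1$ via mass conservation and Lemma \ref{mag2norm}, lower semicontinuity for the positive term) transfers the bound to $M_p(t)$.

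Once $\sup_{t\ge 0} M_p(t) \le C < \infty$ is established, tightness of the mass is automatic: for every $N$,
\begin{equation*}
\sum_{j>N} j c_j(t) \le N^{1-p}\sum_{j>N} j^p c_j(t) \le C N^{1-p},
\end{equation*}
uniformly in $t$. Combining this with the pointwise convergence $c_j(t) \to c_j^\rho$ guaranteed by the weak$\ast$ statement yields $\|c(t)\|_1 \to \|c^\rho\|_1$, which forces $\rho = \rho_0$. Finally, Lemma \ref{mag2norm} upgrades the weak$\ast$ convergence to strong convergence in $X_1$, and uniqueness of the equilibrium at the pair $(\rho_0,\eta)$ (Proposition on equilibria in Section 3.1) identifies the limit as $c^{\rho_0}$.

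The main obstacle is the moment propagation argument itself. The positive contribution to $\dot M_p$ scales like $M_p$ with only a logarithmic gain from $a_j = O(j/\ln j)$, whereas the natural dissipative bound coming from $b_j \ge Cj^\lambda$ is of a lower order $M_{p+\lambda-1}$. Closing the estimate therefore requires delicate use of (H2), which allows one to subtract the leading $z_s a_j$-part from $b_j$ and work with the residual $b_j - z_s a_j \ge C j^\lambda$ to produce a genuinely dissipative term; the sharpness of the hypothesis $p>2-\lambda$ is exactly what makes the Hölder interpolation between $M_1$ and $M_{p+\lambda-1}$ absorb the logarithmically-discounted $M_p$, and I expect this algebraic balancing to be the most delicate step of the whole proof.
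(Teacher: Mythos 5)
Your top-level strategy --- propagate a superlinear moment uniformly in time to get tightness of the mass, then combine with the weak$\ast$ convergence of Theorem \ref{weak-conv} and Lemma \ref{mag2norm} to upgrade to strong convergence in $X_1$ --- matches the paper. The gap is in how you propose to close the moment estimate. You bound the two sides of $\dot M_p$ separately: the growth term $B\sum j^{p-1}a_jc_j \lesssim B\sum j^p c_j/\ln j$ and the dissipative term $-A\sum j^{p-1}b_jc_j \lesssim -A\sum j^{p+\lambda-1}c_j$, and hope to absorb the former into the latter by interpolation. But for $0<\lambda<1$ one has $j^p/\ln j \gg j^{p+\lambda-1}$ as $j\to\infty$, so the dissipative term does not dominate term-by-term, and no H\"older interpolation between $M_1$ and $M_{p+\lambda-1}$ can soak up a quantity that scales like $M_p$ up to a log; the estimate does not close as written. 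Your proposed repair --- subtracting $z_sa_j$ from $b_j$ and working with a residual ``$b_j-z_sa_j\ge Cj^\lambda$'' --- does not follow from the hypotheses: (H2) gives $b_j-z_sa_j\ge 0$ and separately $b_j\ge Cj^\lambda$, but the residual itself need not have any polynomial lower bound (take $b_j=z_sa_j+1$ with $a_j\sim j^\lambda/z_s$).

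The ingredient the paper uses, and you do not invoke, is a \emph{time-asymptotic} rather than algebraic cancellation. Rewrite the flux coefficient as $-b_jA+a_jB=\bigl(-\tfrac{A}{B}+\tfrac{a_j}{b_j}\bigr)Bb_j$. Hypothesis (H2) gives $\tfrac{a_j}{b_j}\le 1/z_s$ uniformly in $j$, while the weak$\ast$ convergence you already have in hand (plus $a_j,b_j=o(j)$, so $A(c(\cdot))$ and $B(c(\cdot))$ are weak$\ast$ continuous) gives $A(c(t))/B(c(t))\to 1/z(\rho)>1/z_s$ as $t\to\infty$. Hence there exist $t_*$ and $\delta>0$ with $-\tfrac{A}{B}+\tfrac{a_j}{b_j}\le -\delta$ for all $j$ and $t>t_*$, and combining with $b_j\ge Cj^\lambda$ and $B(c(t))\ge\varepsilon>0$ for $t$ large yields $\dot M_m\le C-C\delta M_{m-1+\lambda}$ for $t>t_*$. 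Choosing a free parameter $m\in(2-\lambda,p)$ (so that $M_m(t_*)<\infty$ by Theorem \ref{main-exist}) and integrating gives a uniform bound on $M_{m-1+\lambda}$ with exponent $m-1+\lambda>1$, hence tightness. Two points to take away: first, the Gronwall bound cannot be made to hold from $t=0$ depending only on initial data, as you suggest, because for small $t$ one may have $B(c(t))/A(c(t))>z_s$ so the flux has the wrong sign --- the dissipative inequality is only available for $t>t_*$; second, the paper bounds the lower moment $M_{m-1+\lambda}$ (with $m<p$), not $M_p$ itself, which is exactly why the hypothesis needs a free parameter strictly below $p$.
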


\begin{proof}
The main line of argument, as in the previous theorem, is to show that
$O^{+}(c)$ is relatively compact in $X_{1}.$ This will follow by showing that
$M_{m}(t)<C$ for some $m>1.$ Consider the $p^{th}$ moment of the system
$M_{p}:=\sum_{j=0}^{\infty}j^{p}c_{j}(t)$ with ($1<p<2).$ By Theorem 1,
$M_{p}(t)<\infty$ for any $t<\infty$. Now, choose $m<p$ such that
$m>2-\lambda$ still holds. By Lemma 1, one has%
\[
\dot{M}_{m}=\sum_{j\geq1}((j-1)^{m}-j^{m})b_{j}c_{j}A+\sum_{j\geq0}%
((j+1)^{m}-j^{m})a_{j}c_{j}B.
\]
Taylor expanding the $(j-1)^{m}$ and $(j+1)^{m}$ terms up to second order we
find
\[
\dot{M}_{m}\leq-\sum_{j\geq1}mj^{m-1}b_{j}c_{j}A+\sum_{j\geq1}mj^{m-1}%
a_{j}c_{j}B+CAB.
\]
Note that $A,B$ depend on time. By weak$\ast$ convergence (Theorem
\ref{weak-conv}) $c_{j}(t)\rightarrow c_{j}^{e}$ as $t\rightarrow\infty.$
Then, since $\sum_{j\geq1}jc_{j}\leq C,$ one has $\lim_{t\rightarrow\infty
}\sum_{j\geq1}g_{j}c_{j}(t)\rightarrow\sum_{j\geq1}g_{j}c_{j}^{e}$ for any
$g_{j}=o(j).$ Therefore, it follows, since $a_{j},b_{j}=o(j)$, that
$\frac{B(c(t))}{A(c(t))}\rightarrow\frac{B(c^{e})}{A(c^{e})}=z(\rho)<z_{s}$.

Now, since $\frac{b_{j}}{a_{j}}\geq z_{s}$ by the assumption in the theorem,
there is a $t_{\ast}$ and $\delta>0$ such that $-\frac{A}{B}+\frac{a_{j}%
}{b_{j}}\leq-\delta$ and
\[
\dot{M}_{m}\leq C+m\sum_{j\geq1}(-b_{j}c_{j}A+a_{j}c_{j}B)j^{m-1}\leq
C+m\sum_{j\geq1}(-\frac{A}{B}+\frac{a_{j}}{b_{j}})j^{m-1}Bb_{j}c_{j}.
\]
By the fact that $B(c)>\varepsilon$ for some $\varepsilon>0$ (for $t_{\ast}$
large enough) and the condition $b_{j}\geq Cj^{\lambda}$ $(0<\lambda<1)$ we
find%
\[
\dot{M}_{m}\leq C-C\delta\sum_{j\geq1}j^{m-1+\lambda}c_{j}.
\]
Integrating both sides and noting $M_{m-1+\lambda}\leq M_{m}$ we get%
\[
M_{m-1+\lambda}(t)\leq C(t-t_{\ast})+M_{m}(t_{\ast})-C\int_{t_{\ast}}%
^{t}M_{m-1+\lambda}(s)ds.
\]
Comparing this to the solution of $x(t)=C(t-t_{\ast})+M_{m}(t_{\ast}%
)-C\int_{t_{\ast}}^{t}x(s)ds$ we find $M_{m-1+\lambda}(t)\leq C$ for all
$t>t_{\ast}.$ Since $\lambda>0$ and $m>2-\lambda$ by our choice, it follows
that the tail of the distribution $jc_{j}$ uniformly approaches to zero giving
the compactness of the orbit in $X_{1}$ and hence showing $c(t)\rightarrow
c^{\rho}$ strongly.
\end{proof}

\textbf{Remark:} Without essentially changing the proof, the hypothesis $(H2)$
could be replaced with $\frac{b_{j}}{a_{j}}\geq z_{s}$ for finitely many $j$ values.

\section{CONVERGENCE TO EQUILIBRIUM WITHOUT DETAILED BALANCE}

In this section we extend the study of convergence of time dependent solutions
to equilibrium without imposing a structure condition on the equilibria. Our
goal is to obtain explicit convergence rates to equilibrium. We assume,
throughout this section
\[
(H3)\text{ \ \ }K(j,k)=ja_{k}+b_{j}+\varepsilon\beta_{j}\alpha_{k}\text{ with
}a_{j}\geq\tilde{a}>0\text{.}%
\]
Depending on the type of assumptions we obtain two different convergence
results. Each result relies on a contraction property of the time dependent
solution. The first contraction property is a consequence of the monotonicity
of the $a_{j},b_{j}$ functions which leads to exponentially fast convergence
in the "weak" metric ($dist(c,d)=\left\Vert c-d\right\Vert _{0}=\sum_{j\geq
0}\left\vert c_{j}-d_{j}\right\vert ).$ The second contraction property
follows from the total mass of the system being sufficiently small and is used
to show exponentially fast convergence in the "strong" metric ($\left\Vert
c-d\right\Vert _{1}=\sum_{j\geq1}j\left\vert c_{j}-d_{j}\right\vert )$. Such a
contraction property was first shown to hold for the coagulation-fragmentation
systems under a similar small mass assumption \cite{Fournier}.

\subsection{Exponentially Fast Weak Convergence to Equilibrium}

In this subsection our approach is partly motivated by that, in the EDG
equations, $a_{j}$ represents the import rates of particle (and hence causes
growth of clusters) and $b_{j}$ represents the export rate (and hence causes
breakdown of clusters). If such an interpretation was meaningful then one
would expect that for monotonically increasing $b_{j}$ (in $j)$ and
monotonically decreasing $a_{j}$ the dynamics favor the approach to
equilibrium which would be manifested in the convergence rates.

\begin{theorem}
\label{expo-conv}Consider the EDG system (\ref{0-infode})-(\ref{infIC}). Let
the hypothesis of Theorem 1 be satisfied with a given initial mass $\rho.$ Let
the kernel has the form H3 with $a_{j}$ is non-increasing, $b_{j}$
non-decreasing, $\alpha_{j},\beta_{j}$ are bounded with $\varepsilon>0$ is
small. Then the solutions of (\ref{0-infode})-(\ref{infIC}) converge to a
unique equilibrium in the sense that
\begin{equation}
\sum_{j\geq1}\left\vert c_{j}(t)-c_{j}^{e}\right\vert \leq4\rho e^{-\gamma t},
\label{tot_num-conv}%
\end{equation}
where $\gamma(\tilde{a},\varepsilon)>0$ can be computed explicitly.
\end{theorem}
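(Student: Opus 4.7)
The plan is to prove Theorem \ref{expo-conv} by establishing a strict exponential contraction of the semigroup in the weak metric on $X_{\rho,\eta}=\{c\ge 0:\sum_{j\ge 1}jc_j=\rho,\ \sum_{j\ge 0}c_j=\eta\}$. For any two solutions $c(t),d(t)$ of (\ref{0-infode})--(\ref{infIC}) with data in $X_{\rho,\eta}$, set $h_j=c_j-d_j$ and define the weak distance $L(t)=\sum_{j\ge 0}|h_j(t)|$. The goal is the differential inequality $\dot L\le -\gamma(\tilde a,\varepsilon)L$; this single estimate does double duty. Applied to the pair $(T(t)c_0,T(t+s)c_0)$, it shows $(T(t)c_0)_{t\ge 0}$ is Cauchy and converges to an equilibrium $c^e$ (unique by contraction); applied to $(c(t),c^e)$, it delivers the exponential bound. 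The prefactor $4\rho$ in (\ref{tot_num-conv}) arises from $L(c(0),c^e)\le 4\rho$, obtained from $\sum_{j\ge 1}c_j\le\rho$ and the analogous inequality for $c^e$, together with a bookkeeping step for $|c_0(0)-c_0^e|$ via total number conservation.

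To derive the contraction, I would recast the ODE in net-flux form $\dot c_j=J_{j-1}(c)-J_j(c)$ with $J_j(c)=c_j R_j(c)-c_{j+1}S_{j+1}(c)$, where
\[
R_j(c)=a_j\rho+B(c)+\varepsilon\alpha_j\sum_k\beta_k c_k,\qquad S_{j+1}(c)=(j+1)A(c)+b_{j+1}\eta+\varepsilon\beta_{j+1}\sum_k\alpha_k c_k.
\]
Summation by parts yields $\dot L=\sum_{j\ge 0}(\sigma_{j+1}-\sigma_j)(J_j(c)-J_j(d))$ with $\sigma_j\in\partial|h_j|$. Expanding
\[
J_j(c)-J_j(d)=h_jR_j(c)+d_j\bigl(R_j(c)-R_j(d)\bigr)-h_{j+1}S_{j+1}(c)-d_{j+1}\bigl(S_{j+1}(c)-S_{j+1}(d)\bigr),
\]
the genuinely dissipative piece is extracted from $\sigma_jh_jR_j(c)=|h_j|R_j(c)\ge \tilde a\rho\,|h_j|$, which after reassembly produces a contribution of $-\tilde a\rho\, L$. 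The monotonicity hypotheses ($a_j$ non-increasing, $b_j$ non-decreasing) are invoked precisely to guarantee that the remaining structural cross terms carry a sign compatible with the jump pattern $\sigma_{j+1}-\sigma_j$, playing the role of a discrete maximum principle for the exchange dynamics. The $\varepsilon\beta_j\alpha_k$ piece contributes a remainder bounded by $C\varepsilon L$, using boundedness of $\alpha_j,\beta_j$ and $|\sum_k\beta_k h_k|\le(\sup_k\beta_k)L$. Choosing $\varepsilon$ small enough compared with $\tilde a$ absorbs this remainder into the dissipation and delivers $\dot L\le -\gamma L$ with $\gamma=\tilde a\rho-C\varepsilon>0$ explicit.

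The main obstacle lies in the sign analysis in the second step: the rates $R_j(c),S_{j+1}(c)$ are non-local functionals of $c$, so the differences $R_j(c)-R_j(d)$ and $S_{j+1}(c)-S_{j+1}(d)$ couple all modes of $h$, and controlling the sum $\sum_{j}(\sigma_{j+1}-\sigma_j)d_{j+1}(S_{j+1}(c)-S_{j+1}(d))$ (and its analogue for $R_j$) in such a way that the overall differential inequality remains dissipative requires a genuine telescoping argument in which the monotonicity of $a_j$ and $b_j$ is essential rather than cosmetic. This discrete rearrangement, coupled with the uniform lower bound $a_j\ge\tilde a$, is the computational heart of the proof; the extraction of $c^e$ as the semigroup limit and the final exponential estimate then follow by completeness of $X_{\rho,\eta}$ under the weak metric.
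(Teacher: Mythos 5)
Your overall strategy (contraction plus identification of the limit as an equilibrium) is the one the paper uses, but the quantity you try to contract is the wrong one, and that is not a bookkeeping issue — it is where the proof either succeeds or fails.

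You propose to show $\dot L\le -\gamma L$ directly for $L(t)=\sum_{j\ge 0}|h_j|$ via the flux decomposition $\dot L=\sum_j(\sigma_{j+1}-\sigma_j)\bigl(J_j(c)-J_j(d)\bigr)$ with $\sigma_j\in\partial|h_j|$. The problem is that the candidate dissipative term does not actually produce a definite negative contribution proportional to $L$. Take the piece coming from $(j+1)A(c)$ inside $S_{j+1}$: its contribution is
\[
A(c)\sum_{j\ge 0}(j+1)\bigl(\sigma_j h_{j+1}-|h_{j+1}|\bigr),
\]
which is $\le 0$ because $\sigma_j h_{j+1}\le |h_{j+1}|$, but it \emph{vanishes identically} whenever $h_j$ has a constant sign pattern, and more generally it gives no lower bound of the form $-\gamma L$. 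The same issue afflicts the $h_j R_j(c)$ piece: $\sum_j(\sigma_{j+1}h_j-|h_j|)R_j(c)\le 0$ but can equal zero. In short, in the variables $h_j$ the forward and backward flux contributions telescope and cancel — e.g.\ $-\sum_j j|h_j|+\sum_j(j+1)|h_{j+1}|=0$ — so no strict dissipation appears. The constraints $\sum_jh_j=0$ and $\sum_j jh_j=0$ do ensure the sign pattern cannot be constant, but your sketch does not convert this into a quantitative rate, and it is not clear that $\sum_j|h_j|$ is a strict Lyapunov functional at all.

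The paper's Lemma \ref{L-expo-conv} sidesteps this by working with the tail differences $E_j=\sum_{k\ge j}h_k$ and the functional $W(t)=\sum_{j\ge 1}|E_j(t)|$, not with $\sum_j|h_j|$. The tail variable satisfies $\dot C_j=J_{j-1}(c)$, a single flux with no telescoping, so the term $-jE_j A(c)$ survives intact; after multiplying by $\operatorname{sgn}(E_j)$ and summing,
\[
\sum_{j\ge 1}j\bigl(|E_{j+1}|-|E_j|\bigr)A(c)=-A(c)\sum_{j\ge 1}|E_j|\le -\tilde a\,W,
\]
a genuine dissipative term. The monotonicity of $a_j$ and $b_j$ then makes two pairs of remainder terms cancel exactly ($S_1$ with $S_5$, $S_3$ with $S_6$ in the paper's notation), and the $\varepsilon$-terms contribute $O(\varepsilon)W$, giving $\dot W\le -(\tilde a-8L^2\varepsilon)W$. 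The final $\ell^1$ bound on $\sum|h_j|$ follows only \emph{a posteriori} from $\sum_j|h_j|\le 2W$ and $W(0)\le 2\rho$ (whence the factor $4\rho$) — not from a contraction of $\sum_j|h_j|$ itself. This passage from the differences $h_j$ to their tails $E_j$, which eliminates the flux cancellation, is the missing idea in your proposal; the rest (extracting the equilibrium as the limit, uniqueness via the same contraction, smallness of $\varepsilon$) is in line with the paper.
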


The main idea of the theorem (covered in the next lemma) is based on defining
an appropriate positive time dependent quantity which measures the distance
between two solutions that have the same mass and showing that this distance
contracts in time, i.e., two solutions approach to each other. It will then be
shown that the limit solution is actually the equilibrium.

To prove the contraction, we will focus on the evolution of the tail of the
distributions defined as $C_{j}(t)=\sum_{k\geq j}c_{k}(t).$ This approach
proved useful in Becker-Doring systems \cite{Ball},\cite{Phil},\cite{Can1} and
were also recently adopted to prove some of the key properties of the EDG
system such as nonexistence \cite{EE} and extension of uniqueness results
without additional moment assumptions \cite{Sch}.

\begin{lemma}
\label{L-expo-conv}Consider two solutions $c_{j},d_{j}$ of the system
(\ref{0-infode})-(\ref{infIC}) with the same initial mass. Let the conditions
of Theorem 1 be satisfied and hypothesis H3 hold. Assume further that $a_{j}$
is non-increasing, $b_{j}$ non-decreasing, $\alpha_{j},\beta_{j}$ are bounded
and $\varepsilon>0$ is small enough$.$ Then the solutions of (\ref{0-infode}%
)-(\ref{infIC}) approach to each other exponentially fast as
\[
\sum_{j\geq1}\left\vert c_{j}(t)-d_{j}(t)\right\vert \leq4\rho e^{-\gamma t}%
\]

\end{lemma}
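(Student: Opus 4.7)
The plan is to pass to the cumulative tails $C_j(t) = \sum_{k \geq j} c_k(t)$ and $D_j(t) = \sum_{k \geq j} d_k(t)$, which by telescoping the right-hand side of \eqref{infode} satisfy the scalar equations $\dot C_j = J_{j-1}(c)$ with $J_{j-1}(c) = c_{j-1}[a_{j-1}\rho + B(c) + \varepsilon\alpha_{j-1}\tilde\beta(c)] - c_j[jA(c) + b_j\eta + \varepsilon\beta_j\tilde\alpha(c)]$, using the notation $A(c) = \sum_k a_k c_k$, $B(c) = \sum_k b_k c_k$ and analogous $\tilde\alpha(c),\tilde\beta(c)$. Since both solutions share $\rho$ and, by the two conservation laws of Theorem \ref{main-exist}, the total number density $\eta$, the tail difference $Z_j := C_j - D_j$ satisfies the boundary identities $Z_0 = 0$ and $\sum_{j \geq 1} Z_j = 0$. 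The contraction functional I would work with is $\Phi(t) := \sum_{j \geq 1} |Z_j(t)|$, whose initial value is bounded by $\Phi(0) \leq 2\rho$.

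The key step is to establish a differential inequality $\dot\Phi \leq -\gamma\Phi$. Using $c_j - d_j = Z_j - Z_{j+1}$, the flux difference $J_{j-1}(c) - J_{j-1}(d)$ splits into a \emph{transport} piece linear in the $Z_k$'s (with coefficients $P_j, Q_j$ coming from the two bracketed factors in $J_{j-1}$) and a \emph{reaction} remainder $R_j = d_{j-1}[B(c) - B(d)] - jd_j[A(c) - A(d)]$ together with analogous $\varepsilon$-terms. Summation by parts (using $Z_0 = 0$) yields the identities $A(c) - A(d) = \sum_{k \geq 1}(a_k - a_{k-1})Z_k$ and $B(c) - B(d) = \sum_{k \geq 1}(b_k - b_{k-1})Z_k$, whose coefficients are sign-definite by the monotonicity hypotheses. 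Differentiating $\Phi$ with a subgradient $\sigma_j \in \partial|Z_j|$, and using the elementary bounds $\sigma_j(Z_{j-1} - Z_j) \leq |Z_{j-1}| - |Z_j|$ and $-\sigma_j(Z_j - Z_{j+1}) \leq |Z_{j+1}| - |Z_j|$, a shift of indices converts the transport contribution (at $\varepsilon = 0$) into $\rho\sum_j(a_j - a_{j-1})|Z_j| - A(c)\Phi - \eta\sum_j(b_j - b_{j-1})|Z_j|$, while the reaction contribution, controlled by the moment estimates $|\sum_j \sigma_j d_{j-1}| \leq \eta$ and $|\sum_j \sigma_j j d_j| \leq \rho$, is majorized by $\eta\sum_k(b_k - b_{k-1})|Z_k| + \rho\sum_k(a_{k-1} - a_k)|Z_k|$. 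The two variation sums in $a$ and the two in $b$ then cancel exactly, leaving the clean estimate $\dot\Phi \leq -A(c)\Phi \leq -\tilde a\,\eta\,\Phi$.

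The $\varepsilon\beta_j\alpha_k$ perturbation contributes, through the same bookkeeping, additional terms bounded by $C\varepsilon\|\alpha\|_\infty\|\beta\|_\infty(\rho + \eta)\Phi$, which for $\varepsilon$ sufficiently small is absorbed into the main decay, giving $\dot\Phi \leq -\gamma\Phi$ with an explicit $\gamma = \gamma(\tilde a,\varepsilon) > 0$. Gronwall then yields $\Phi(t) \leq 2\rho\,e^{-\gamma t}$, and since $\sum_{j \geq 1}|c_j - d_j| = \sum_{j \geq 1}|Z_j - Z_{j+1}| \leq 2\Phi(t)$, the claim $\sum_{j \geq 1}|c_j - d_j| \leq 4\rho\,e^{-\gamma t}$ follows. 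The main obstacle is the exact cancellation of the variation terms: it requires applying the subgradient inequalities on the tighter side in each of the transport and reaction pieces, and it works only because the monotonicity hypotheses ($a$ non-increasing, $b$ non-decreasing) align the signs of $(a_k - a_{k-1})$ and $(b_k - b_{k-1})$ with the ones produced by the transport estimate. Without such alignment one is left with uncontrolled terms of order $\rho\sum|a_k - a_{k-1}||Z_k|$ that need not be dominated by $\tilde a\,\eta\,\Phi$, and the smallness of $\varepsilon$ serves solely to keep the non-detailed-balance correction from spoiling this contraction.
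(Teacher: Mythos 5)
Your proposal follows essentially the same route as the paper: you pass to tail differences $Z_j = C_j - D_j$ (the paper's $E_j$), bound $\frac{d}{dt}\sum_{j\geq 1}|Z_j|$ using the sign/subgradient trick, exploit the monotonicity of $a_j,b_j$ to cancel the variation terms arising from the transport and reaction pieces, absorb the $\varepsilon$-perturbation into the $-\tilde a$ decay, and close by Gronwall together with $\sum_{j\geq 1}|c_j - d_j| \leq 2\sum_{j\geq 1}|Z_j|$. The only cosmetic differences are your subgradient formulation (somewhat cleaner than the paper's pointwise $\mathrm{sgn}(E_j)\dot E_j$) and keeping $\eta$ explicit rather than normalizing it to $1$; the decomposition, the cancellation mechanism, and the final bound $4\rho e^{-\gamma t}$ all coincide with the paper's argument.
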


\begin{proof}
We first consider the dynamics for $C_{j}$, the tail of $(c_{j})_{j=1}%
^{\infty}.$ By direct computation the evolution equation for $C_{j}$ is
\begin{align*}
\dot{C}_{j}  &  =\sum_{k\geq1}K(k,j-1)c_{k}c_{j-1}-\sum_{k\geq0}%
K(j,k)c_{k}c_{j}\\
&  =\sum_{k\geq1}(ka_{j-1}+b_{k})c_{k}c_{j-1}-\sum_{k\geq0}(ja_{k}+b_{j}%
)c_{k}c_{j}\\
&  +\varepsilon\sum_{k\geq1}\alpha_{j-1}\beta_{k}c_{k}c_{j-1}-\varepsilon
\sum_{k\geq0}\beta_{j}\alpha_{k}c_{k}c_{j}.
\end{align*}
Taking the sum over $"k"$ and denoting, as before, $A(c)=\sum_{j\geq0}%
a_{j}c_{j},$ $B(c)=\sum_{j\geq1}b_{j}c_{j}$ and defining $\tilde{A}%
(c)=\sum_{j=0}^{\infty}\alpha_{j}c_{j}$, $\tilde{B}(c)=\sum_{j=1}^{\infty
}\beta_{j}c_{j}$ one gets
\[
\dot{C}_{j}=\rho a_{j-1}c_{j-1}+B(c)c_{j-1}-jc_{j}A(c)-b_{j}c_{j}%
+\varepsilon\alpha_{j-1}c_{j-1}\tilde{B}(c)-\varepsilon\beta_{j}c_{j}\tilde
{A}(c),
\]
where we used $\rho=\sum_{j\geq0}jc_{j}$ and $1=\sum_{j\geq0}c_{j}.$
Similarly, for the other solution $d_{j},$ one has%
\[
\dot{D}_{j}=\rho a_{j-1}d_{j-1}+B(d)d_{j-1}-jd_{j}A(c)-b_{j}d_{j}%
+\varepsilon\alpha_{j-1}d_{j-1}\tilde{B}(d)-\varepsilon\beta_{j}d_{j}\tilde
{A}(d).
\]
Since $\rho=\sum_{j\geq0}jc_{j}=\sum_{j\geq0}jd_{j},$ one has%
\begin{align*}
\dot{E}_{j}  &  =\rho a_{j-1}e_{j-1}+\left(  B(c)c_{j-1}-B(d)d_{j-1}\right)
-\left(  jc_{j}A(c)-jd_{j}A(d)\right)  -b_{j}e_{j}\\
&  +\varepsilon\alpha_{j-1}(\tilde{B}(c)c_{j-1}-\tilde{B}(d)d_{j-1}%
)-\varepsilon\beta_{j}(c_{j}\tilde{A}(d)-c_{j}\tilde{A}(d)).
\end{align*}
Then, setting $e_{j}=E_{j}-E_{j+1},$ for the difference terms in the
parenthesis we can write
\[
c_{j}A(c)-d_{j}A(d)=e_{j}A(c)+d_{j}(A(c)-A(d))=(E_{j}-E_{j+1})A(c)+d_{j}%
(A(c)-A(d)),
\]%
\[
B(c)c_{j-1}-B(d)d_{j-1}=e_{j-1}B(c)+d_{j-1}(B(c)-B(d))=(E_{j-1}-E_{j}%
)B(c)+d_{j-1}(B(c)-B(d)).
\]
Denoting $A(c)-A(d)=A(e)$ and $B(c)-B(d)=B(e)$ (similarly for $\tilde
{A},\tilde{B})$ we find that the tail of the difference of solutions evolves
according to%
\begin{align*}
\dot{E}_{j}  &  =\rho a_{j-1}(E_{j-1}-E_{j})+(E_{j-1}-E_{j})B(c)+d_{j-1}B(e)\\
&  -j(E_{j}-E_{j+1})A(c)-jd_{j}A(e)-b_{j}(E_{j}-E_{j+1})\\
&  +\varepsilon\alpha_{j-1}\left(  \tilde{B}(c)(E_{j-1}-E_{j})+d_{j}\tilde
{B}(e)\right)  -\varepsilon b_{j}\left(  \tilde{A}(c)(E_{j}-E_{j+1}%
)+d_{j}\tilde{A}(e)\right)  .
\end{align*}

We next show that the tail of the difference goes to zero. Consider the
absolute value of the tail density $\left\vert E_{j}\right\vert .$ Taking the
time derivative we get%
\[
\frac{d\left\vert E_{j}\right\vert }{dt}=sgn(E_{j})\dot{E}_{j}%
\]%
\begin{align*}
&  =sgn(E_{j})\left(  \rho a_{j-1}(E_{j-1}-E_{j})+(E_{j-1}-E_{j}%
)B(c)+d_{j-1}B(e)\right) \\
&  +sgn(E_{j})\left(  -j(E_{j}-E_{j+1})A(c)-jd_{j}A(e)-b_{j}(E_{j}%
-E_{j+1})\right) \\
&  +\varepsilon sgn(E_{j})\left(  \alpha_{j-1}\left(  \tilde{B}(c)(E_{j-1}%
-E_{j})+d_{j}\tilde{B}(e)\right)  -\beta_{j}\left(  \tilde{A}(c)(E_{j}%
-E_{j+1})+d_{j}\tilde{A}(e)\right)  \right)  .
\end{align*}
Since $sgn(E_{j})E_{j}=\left\vert E_{j}\right\vert $ and $E_{j\pm1}%
\leq\left\vert E_{j\pm1}\right\vert ,$ summing over $j$ in both sides gives%
\begin{align}
\sum_{j=1}^{\infty}\frac{d\left\vert E_{j}\right\vert }{dt}  &  \leq\sum
_{j=1}^{\infty}\left(  \rho a_{j-1}(\left\vert E_{j-1}\right\vert -\left\vert
E_{j}\right\vert )+(\left\vert E_{j-1}\right\vert -\left\vert E_{j}\right\vert
)B(c)+d_{j-1}\left\vert B(e)\right\vert \right) \label{Ej-sum-der}\\
&  +\sum_{j=1}^{\infty}\left(  j(\left\vert E_{j+1}\right\vert -\left\vert
E_{j}\right\vert )A(c)+jd_{j}\left\vert A(e)\right\vert +b_{j}(\left\vert
E_{j+1}\right\vert -\left\vert E_{j}\right\vert )\right)  \label{Ej-sum-derB}%
\\
&  +\varepsilon\alpha_{j-1}\left(  \tilde{B}(c)(\left\vert E_{j-1}\right\vert
-\left\vert E_{j}\right\vert )+d_{j-1}\tilde{B}(e)\right)  +\varepsilon
\beta_{j}\left(  \tilde{A}(c)(\left\vert E_{j+1}\right\vert -\left\vert
E_{j}\right\vert )+d_{j}\tilde{A}(e)\right)  . \label{Ej-sum-derC}%
\end{align}

Now, let $S_{1},S_{2},S_{3}$ denote the sum of the three sums on the right
hand side of (\ref{Ej-sum-der}), $S_{4},S_{5},S_{6}$ denote the three sums in
(\ref{Ej-sum-derB}) and $S_{7},S_{8},S_{9},S_{10}$ denote the four terms in
(\ref{Ej-sum-derC}). We treat each $S_{j}$ separately. For the first term, we
have%
\[
S_{1}=\rho\sum_{j=1}^{\infty}a_{j-1}(\left\vert E_{j-1}\right\vert -\left\vert
E_{j}\right\vert )\rho=\rho a_{0}\left\vert E_{0}\right\vert +\rho\sum
_{j=1}^{\infty}(a_{j}-a_{j-1})\left\vert E_{j}\right\vert .
\]
where the term $a_{0}\left\vert E_{0}\right\vert $ is zero by the conservation
of total volume, that is, $E_{0}=\sum_{j=0}^{\infty}c_{j}-\sum_{j=0}^{\infty
}d_{j}=0.$ For the second term $S_{2}$ we find%
\[
S_{2}=B(c)\sum_{j=1}^{\infty}(\left\vert E_{j-1}\right\vert -\left\vert
E_{j}\right\vert )=0.
\]
For $S_{3}$ we first observe, since $\sum_{j=1}^{\infty}d_{j-1}=1$ (total
volume),%
\[
S_{3}=\sum_{j=1}^{\infty}d_{j-1}\left\vert B(e)\right\vert =\left\vert
B(e)\right\vert ,
\]
while $\left\vert B(e)\right\vert $ can be written as%
\begin{align*}
\left\vert B(e)\right\vert  &  =\left\vert \sum_{j=1}^{\infty}b_{j}%
e_{j}\right\vert =\left\vert \sum_{j=1}^{\infty}b_{j}(E_{j}-E_{j+1}%
)\right\vert \leq b_{1}\left\vert E_{1}\right\vert +\left\vert \sum
_{j=2}^{\infty}(b_{j}-b_{j-1})E_{j}\right\vert \\
&  \leq b_{1}\left\vert E_{1}\right\vert +\sum_{j=2}^{\infty}\left\vert
b_{j}-b_{j-1}\right\vert \left\vert E_{j}\right\vert .
\end{align*}

Next we compute the terms in (\ref{Ej-sum-derB}), $S_{4},S_{5},S_{6}.$ For the
$S_{4}$ term we find%
\[
S_{4}=\sum_{j=1}^{\infty}j(\left\vert E_{j+1}\right\vert -\left\vert
E_{j}\right\vert )A(c)=-\left\vert E_{1}\right\vert -\sum_{j=2}^{\infty
}(j-1-j)\left\vert E_{j}\right\vert =-\sum_{j=1}^{\infty}\left\vert
E_{j}\right\vert .
\]
The $S_{5}$ term reads%
\[
\sum_{j=1}^{\infty}jd_{j}\left\vert A(e)\right\vert =\rho\left\vert
A(e)\right\vert
\]
and the $\left\vert A(e)\right\vert $ term can be written as%
\begin{align*}
\left\vert A(c)-A(d)\right\vert  &  =\left\vert \sum_{j=0}^{\infty}a_{j}%
e_{j}\right\vert =\left\vert \sum_{j=0}^{\infty}a_{j}(E_{j}-E_{j+1}%
)\right\vert \leq a_{0}\left\vert E_{0}\right\vert +\left\vert \sum
_{j=1}^{\infty}(a_{j}-a_{j-1})E_{j}\right\vert \\
&  \leq\sum_{j=1}^{\infty}\left\vert a_{j}-a_{j-1}\right\vert \left\vert
E_{j}\right\vert .
\end{align*}
where again we used $E_{0}=0$. Now, shifting the indices, the $S_{6}$ term can
be written as%
\[
\sum_{j=1}^{\infty}b_{j}(-\left\vert E_{j}\right\vert +\left\vert
E_{j+1}\right\vert )=-b_{1}\left\vert E_{1}\right\vert +\sum_{j=1}^{\infty
}(b_{j-1}-b_{j})\left\vert E_{j}\right\vert .
\]
Now, we notice that, by the non-increasing property of $a_{j},$ $a_{j}%
-a_{j-1}=-\left\vert a_{j}-a_{j-1}\right\vert $ and hence $S_{1}$ and $S_{5}$
are opposite of each other and cancel out. Similarly, by the non-decreasing
property of $b_{j},$ $b_{j-1}-b_{j}=-\left\vert b_{j}-b_{j-1}\right\vert $ and
therefore $S_{3}$ and $S_{6}$ also cancel each other in the sum. Then, since
$S_{2}=0$ by computation, we are left with the following%
\begin{equation}
\sum_{j=1}^{\infty}\frac{d\left\vert E_{j}\right\vert }{dt}\leq-A(c)\sum
_{j=1}^{\infty}\left\vert E_{j}\right\vert +S_{7}+S_{8}+S_{9}+S_{10}.
\label{Ej-sum-der2}%
\end{equation}
Finally, we treat the $S_{7},...,S_{10}$ terms. Setting $\beta_{0}=0$ and
repeating the manipulations done for $S_{1},...,S_{6}$ we find%
\[
S_{7}+S_{8}\leq\varepsilon\tilde{B}(c)\sum_{j=1}^{\infty}\left\vert \alpha
_{j}-\alpha_{j-1}\right\vert \left\vert E_{j}\right\vert +\varepsilon\tilde
{A}(d)\sum_{j=1}^{\infty}\left\vert \beta_{j}-\beta_{j-1}\right\vert
\left\vert E_{j}\right\vert ,
\]%
\[
S_{9}+S_{10}\leq\varepsilon\tilde{A}(c)\sum_{j=1}^{\infty}\left\vert \beta
_{j}-\beta_{j-1}\right\vert \left\vert E_{j}\right\vert +\varepsilon\tilde
{B}(d)\sum_{j=1}^{\infty}\left\vert \alpha_{j}-\alpha_{j-1}\right\vert
\left\vert E_{j}\right\vert .
\]
Now, since $\left\vert \alpha_{j}-\alpha_{j-1}\right\vert ,\left\vert
\beta_{j}-\beta_{j-1}\right\vert \leq2L$ for some $L>0,$ we have%
\[
S_{7}+S_{8}+S_{9}+S_{10}\leq8\varepsilon L^{2}\sum_{j=1}^{\infty}\left\vert
E_{j}\right\vert .
\]
Adding all terms in (\ref{Ej-sum-der2}) and using $A(c)=\sum_{j=0}^{\infty
}a_{j}c_{j}\geq\sum_{j=0}^{\infty}\tilde{a}c_{j}=\tilde{a}$ gives
\[
\sum_{j=1}^{\infty}\frac{d\left\vert E_{j}\right\vert }{dt}\leq-\tilde{a}%
\sum_{j=1}^{\infty}\left\vert E_{j}\right\vert +\varepsilon\left(  8L^{2}%
\sum_{j=1}^{\infty}\left\vert E_{j}\right\vert \right)  \leq-(\tilde{a}%
-8L^{2}\varepsilon)\sum_{j=1}^{\infty}\left\vert E_{j}\right\vert
\]
from which we deduce $\sum_{j=1}^{\infty}\left\vert E_{j}(t)\right\vert
\leq\sum_{j=1}^{\infty}\left\vert E_{j}(0)\right\vert e^{-(\tilde
{a}-\varepsilon8L^{2})t}.$ To finish the proof we observe
\[
\left\vert e_{j}\right\vert \leq\left\vert E_{j}\right\vert +\left\vert
E_{j+1}\right\vert ,
\]
and then taking the sum we arrive at
\begin{equation}
\sum_{j=0}^{\infty}\left\vert e_{j}\right\vert \leq2\sum_{j=1}^{\infty
}\left\vert E_{j}\right\vert \leq2\left(  \sum_{j=1}^{\infty}\left\vert
E_{j}(0)\right\vert \right)  e^{-(\tilde{a}-8\varepsilon L^{2})t}.
\label{ej_bd}%
\end{equation}
To finish the proof we observe that $E_{j}(0)=\sum_{k\geq j}e_{k}(0)$ and the
sum $\sum_{j=1}^{\infty}\left\vert E_{j}(0)\right\vert $ can be written as%
\begin{align*}
\sum_{j=1}^{\infty}\left\vert E_{j}(0)\right\vert  &  \leq\sum_{j=1}^{\infty
}\sum_{k\geq j}\left\vert e_{k}(0)\right\vert =\sum_{k=1}^{\infty}\sum
_{j=1}^{k}\left\vert e_{k}(0)\right\vert =\sum_{k=1}^{\infty}k\left\vert
e_{k}(0)\right\vert \\
&  \leq\sum_{k=1}^{\infty}k(c_{k}+d_{k})\leq2\rho.
\end{align*}
where in the first line we changed the order of summation. Using this in
(\ref{ej_bd}) completes the proof.
\end{proof}

As a consequence of this lemma, all solutions having the same mass will go to
the equilibrium solution as shown in the next proposition. Although the result
is obtained only for non-decreasing $b_{j},$ non-increasing $a_{j}$, the
involvement of the monotonicity gives a clear sign that the result should
generalize (see the Conclusion section).

Next, we need to ensure that the solutions are mass preserving at all times.

\begin{lemma}
\label{L-fin-mom}Under the conditions of Lemma \ref{L-expo-conv} all moments
of the EDG system \textit{(\ref{0-infode})-(\ref{infIC}) are finite.}
\end{lemma}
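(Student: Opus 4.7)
The plan is an inductive moment bound driven by the dissipation coming from the $jA(c)$ piece of the kernel. I would work with the truncated system (\ref{Tode0})-(\ref{TodeIC}) and apply Lemma 1 with $g_j=j^q$ for integer $q\geq 2$. Because, for the kernel $K(j,k)=ja_k+b_j+\varepsilon\beta_j\alpha_k$, one has $\sum_k K(j,k)c_k=jA+b_j\eta+\varepsilon\beta_j\tilde A$ and $\sum_{k\geq 1}K(k,j)c_k=a_j\rho+B+\varepsilon\alpha_j\tilde B$, Lemma 1 gives
\[
\dot M_q^N=\sum_{j\geq 1}\bigl((j-1)^q-j^q\bigr)c_j^N\bigl(jA+b_j\eta+\varepsilon\beta_j\tilde A\bigr)+\sum_{j\geq 0}\bigl((j+1)^q-j^q\bigr)c_j^N\bigl(a_j\rho+B+\varepsilon\alpha_j\tilde B\bigr),
\]
with all quantities evaluated on $c^N$.

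Taylor-expanding $(j\pm 1)^q-j^q=\pm q j^{q-1}+O_q(j^{q-2})$, the contribution of the leading negative coefficient $-qj^{q-1}$ paired with the factor $jA$ is exactly $-qA\,M_q^N\leq -q\tilde a\,M_q^N$, the desired dissipative term. The remaining pieces are either sign-beneficial (the $-qj^{q-1}$ paired with $b_j\eta$ is nonpositive) or can be bounded by lower-order moments using $a_j\leq a_0$ (non-increasing), $b_j\leq Cj$ (which follows from $K(j,1)\leq Cj$ in Theorem \ref{main-exist}) and the uniform bounds on $\alpha_j,\beta_j\leq L$; moreover $A,B,\tilde A,\tilde B$ are themselves bounded in terms of $\rho,\eta,a_0,L$. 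Collecting terms one obtains, uniformly in $N$,
\[
\dot M_q^N\leq -q\tilde a\,M_q^N+C_q\bigl(1+M_{q-1}^N\bigr).
\]
By induction starting from the conserved quantities $M_0^N=\eta$, $M_1^N=\rho$, a Gronwall argument on this inequality yields $M_q^N(t)\leq e^{-q\tilde a\, t}M_q^N(0)+C_q'/(q\tilde a)$ uniformly in $N$, and Fatou's lemma together with the pointwise convergence $c_j^N(t)\to c_j(t)$ from Theorem \ref{main-exist} delivers finiteness of $M_q(t)$ at every integer order; non-integer moments then follow by interpolation.

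The main obstacle is the bookkeeping of the Taylor remainder and of the constants $C_q$ at each inductive step so that the uniform-in-$N$ differential inequality really does close (in particular one must absorb the $O_q(j^{q-2})\cdot jA$ cross term into $C_q M_{q-1}^N$ rather than into $C_q M_q^N$, which is where the splitting between the dominant $-q\tilde a\,M_q^N$ and the rest is delicate). A secondary technical point, should the statement be read as covering initial data with only $M_p(0)<\infty$ for a single $p>1$, is instantaneous regularization of higher moments: this can be handled by replacing $g_j=j^q$ with a truncated weight $g_j=(j\wedge K)^q$ so that the moment is a priori finite, running the same Gronwall estimate, and then sending $K\to\infty$; the dissipative term $-q\tilde a\,M_q^N$ absorbs the resulting bound independently of $M_q(0)$.
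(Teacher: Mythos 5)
Your argument follows the same route as the paper: apply Lemma 1 to the truncated system with the weight $g_j=j^q$, isolate the dissipative term $-q\tilde{a}M_q^N$ coming from the $jA(c)$ part of the kernel, bound the remainder by lower-order moments using $a_j\leq a_0$, $b_j\leq\bar{b}j$, $\alpha_j,\beta_j\leq L$, and close with Gronwall and induction; the paper just does $q=2$ explicitly and invokes induction for the rest. The core computation is correct and matches.

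One caution about your secondary remark on instantaneous regularization: you are right that the lemma as stated ("all moments are finite") under only the Theorem \ref{main-exist} hypothesis $M_p(0)<\infty$ for some $p>1$ would require a smoothing argument, a point the paper's proof glosses over as well since its Gronwall bound $M_2^N(t)\leq M_2^N(0)e^{-\lambda t}+C$ still carries $M_2^N(0)$. But your proposed fix, truncating the weight to $(j\wedge K)^q$ and letting $K\to\infty$, does not by itself close that gap: the resulting Gronwall estimate again reads $M_{q,K}^N(t)\leq e^{-q\tilde{a}t}M_{q,K}^N(0)+C_q'$, and as $K\to\infty$ the initial term converges to $M_q^N(0)$, which still blows up with $N$ if $M_q(0)=\infty$. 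Genuine instantaneous regularization here would need a different mechanism (e.g.\ pointwise iteration of the Duhamel formula exploiting the factor $e^{-\tilde{a}jt}$), not the moment-level Gronwall inequality alone.
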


\begin{proof}
We make the proof for $n=2$ and the general proof is inferred by induction.
Consider the truncated system \textit{(\ref{Tode0})-(\ref{TodeIC}). }Using
Lemma 1 we have%
\begin{align*}
\dot{M}_{2}^{N}(t)  &  =\sum_{j=0}^{N-1}\sum_{k=1}^{N}((j+1)^{2}-j^{2}%
)(ka_{j}+b_{k}+\varepsilon\beta_{k}a_{j})c_{j}^{N}c_{k}^{N}\\
&  +\sum_{j=1}^{N}\sum_{k=0}^{N-1}((j-1)^{2}-j^{2})(ja_{k}+b_{j}%
+\varepsilon\beta_{j}a_{k})c_{j}^{N}c_{k}^{N}.
\end{align*}
Expanding the terms in the parenthesis we find%
\begin{align*}
\dot{M}_{2}^{N}(t)  &  \leq\sum_{j=0}^{N-1}(2j+1)a_{j}c_{j}^{N}\rho+\sum
_{j=0}^{N-1}(2j+1)c_{j}^{N}B^{N}+\varepsilon\sum_{j=0}^{N-1}(2j+1)\alpha
_{j}\tilde{B}^{N}c_{j}^{N}\\
&  +\sum_{j=1}^{N}(1-2j)jA^{N}c_{j}^{N}+\sum_{j=1}^{N}(1-2j)b_{j}c_{j}%
^{N}+\varepsilon\sum_{j=1}^{N}(1-2j)\beta_{j}\tilde{A}^{N}c_{j}^{N},
\end{align*}
where $A^{N}=\sum_{j=0}^{N-1}a_{j}c_{j}^{N},$ $B^{N}=\sum_{j=1}^{N}b_{j}%
c_{j}^{N}$ (and similarly for $\tilde{A}^{N},\tilde{B}^{N}).$ Using $a_{j}\leq
a_{0}$, $b_{j}\leq\bar{b}j$ and the bound $\alpha_{j},\beta_{j}\leq L$ we get
the inequality%
\begin{align}
\dot{M}_{2}^{N}(t)  &  \leq(2\rho+1)a_{0}\rho+L(2\rho+1)\rho+2\varepsilon
L^{2}(2\rho+1)\label{M2-der2}\\
&  +La_{0}-2\tilde{a}(\sum_{j=1}^{N}c_{k}(0))M_{2}^{N}(t)+(L-2b_{\min}%
\rho)+\varepsilon(L^{2}-2\beta_{\min}\rho)\nonumber\\
&  \leq C(\rho,a_{0},b_{\min},\beta_{\min},L,\varepsilon)-2\tilde{a}%
(\sum_{j=1}^{N}c_{k}(0))2M_{2}^{N}(t).
\end{align}

where we used the conservation of volume for the truncated system $\sum
_{j=1}^{N}c_{k}^{N}(t)=\sum_{j=1}^{N}c_{k}(0)$ and chose $N$ large enough that
$\sum_{j=1}^{N}c_{k}^{N}(0)>0.$ By Gronwall inequality we see that $M_{2}%
^{N}(t)$ is uniformly bounded$.$ Hence we can pass to the limit $N\rightarrow
\infty$ and hence $M_{2}(\infty)\leq C(\rho,a_{0},b_{\min},\beta_{\min
},L,\varepsilon)/2.$ By induction and following similar steps of computations
it can easily be shown that $M_{n}(t)$ is finite for any $n>2$.
\end{proof}

Now we can show the existence of equilibrium solutions\textit{. }

\begin{proposition}
\label{no-detB1}Let the hypothesis of hypothesis of Theorem 1 be satisfied and
(H3) hold. Assume further that $a_{j}$ is non-increasing, $b_{j}$
non-decreasing and $\alpha_{j},\beta_{j}$ are bounded and $\varepsilon>0$ is
small$.$ Then for any solution satisfying (\ref{0-infode})-(\ref{infIC})\ one
has%
\[
\lim_{t\rightarrow\infty}\left\vert \dot{c}_{j}(t)\right\vert =0\text{
\ \ }(\text{for }j\geq0).
\]

\end{proposition}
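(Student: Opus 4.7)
The plan is to combine the contraction estimate of Lemma \ref{L-expo-conv} with the uniform moment bounds of Lemma \ref{L-fin-mom}. The first supplies a limit of $c(t)$ in a weak norm, the second upgrades the convergence to a topology strong enough that each macroscopic quantity appearing in $\dot c_j$ passes to the limit, and then a standard argument forces the limit of $\dot c_j$ itself to vanish.

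First I would apply Lemma \ref{L-expo-conv} to the pair $c(\cdot)$ and its time-shift $c(\cdot+h)$, which are both solutions of (\ref{0-infode})--(\ref{infIC}) with common initial mass $\rho$ by conservation of mass and the semigroup property (Theorem \ref{semiG}). The lemma then yields
\[
\sum_{j \geq 1} |c_j(t+h) - c_j(t)| \leq 4\rho\, e^{-\gamma t} \qquad \text{uniformly in } h > 0.
\]
Hence $\{c(t)\}$ is Cauchy in $X_{0}$ as $t \to \infty$ and converges to some $c^e$, in particular $c_j(t)\to c^e_j$ for every $j$. Next I would use Lemma \ref{L-fin-mom} to secure a bound $M_2(t) \leq C$ uniform in $t$. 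This yields the tightness estimate $\sum_{j \geq N} j\, c_j(t) \leq C/N$, which together with the pointwise convergence upgrades $c(t)\to c^e$ to convergence in $X_1$.

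Third, I would exploit the separable form (H3) to rewrite the right-hand side of (\ref{infode}) as an affine combination of $c_{j-1}$, $c_j$, $c_{j+1}$ whose coefficients are built from the macroscopic quantities $A(c) = \sum_k a_k c_k$, $B(c) = \sum_k b_k c_k$, $\tilde A(c) = \sum_k \alpha_k c_k$, $\tilde B(c) = \sum_k \beta_k c_k$ and the conserved mass $\rho$. Since $a_k,\alpha_k,\beta_k$ are bounded, $A,\tilde A,\tilde B$ are continuous in $X_0$ and converge along $c(t)\to c^e$; since $b_k=O(k)$, $B$ is continuous in $X_1$ (precisely why step two is needed) and also converges. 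The right-hand side of $\dot c_j(t)$ therefore has a limit $L_j$ as $t\to\infty$. Since $c_j(t)$ is convergent and hence bounded, the familiar argument that $L_j\neq 0$ would force $c_j(t)$ to drift linearly forces $L_j=0$, proving the claim.

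The step I expect to be the main obstacle is the uniform tightness. Lemma \ref{L-fin-mom} must provide a bound on $M_2(t)$ that is genuinely uniform in $t$, not merely finite at each finite time, and if the initial data only satisfies $M_p(0)<\infty$ for some $p>1$ (as allowed by Theorem \ref{main-exist}) rather than $M_2(0)<\infty$, one first has to exploit the smoothing structure of the differential inequality $\dot M_2 \leq C_1 - C_2 M_2$ to produce finiteness of $M_2(t_0)$ at some $t_0>0$ and then restart the contraction argument on $[t_0,\infty)$. This is benign for the asymptotic conclusion but requires care in the bookkeeping.
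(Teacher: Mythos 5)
Your proposal is correct but takes a genuinely different route from the paper's, though both rest on the same two pillars, Lemma \ref{L-expo-conv} and the uniform moment bound of Lemma \ref{L-fin-mom}. The paper applies the contraction estimate to $c(\cdot)$ and $c(\cdot+\delta)$, divides by $\delta$, invokes the mean value theorem to identify the difference quotients with $\dot c_j$ at intermediate times, uses the moment bounds to dominate the resulting series, and then sends $\delta\to 0$; this yields the quantitatively stronger conclusion $\sum_{j}|\dot c_j(t)|\le C e^{-\gamma t}$ in one step. You instead fix a time-shift $h>0$, observe that the bound $4\rho e^{-\gamma t}$ is $h$-uniform, deduce the Cauchy property and hence $\ell^1$ convergence $c(t)\to c^e$, upgrade to $X_1$ convergence via the uniform $M_2$ bound, then exploit the continuity of $A,B,\tilde A,\tilde B$ in the appropriate topologies (with $B$ being the one that genuinely needs $X_1$ convergence since $b_j=O(j)$) to conclude $\dot c_j(t)\to L_j$, and finish with the elementary fact that a bounded function with convergent derivative must have derivative tending to zero. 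Your route is more transparent and arguably more robust (it does not need to push a sum through a $\delta\to 0$ limit), but it delivers only qualitative convergence $|\dot c_j(t)|\to 0$ rather than an explicit exponential rate. The bookkeeping issue you flag about $M_2(0)$ possibly being infinite under the bare hypotheses of Theorem \ref{main-exist} is real and present in the paper's proof as well; the smoothing argument you sketch (finiteness of $M_2$ for $t>0$ via $\dot M_2\le C_1-C_2 M_2$, then restart) is the standard fix.
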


\begin{proof}
Let $d(t)=c(t+\delta).$ Then, by the contraction property (Lemma
\ref{L-expo-conv}) we have%
\begin{equation}
\sum_{j=0}^{\infty}\frac{\left\vert c_{j}(t+\delta)-c_{j}(t)\right\vert
}{\delta}\leq e^{-\gamma t}\sum_{j=0}^{\infty}\frac{\left\vert c_{j}%
(\delta)-c_{j}(0)\right\vert }{\delta}. \label{cj-expo-ineq}%
\end{equation}

To take the limit $\delta\rightarrow0$ we first observe that the term in the
right hand side can be written as%
\begin{equation}
\frac{\left\vert c_{j}(\delta)-c_{j}(0)\right\vert }{\delta}=\left\vert
\dot{c}_{j}(\delta_{j})\right\vert \label{cj-quot}%
\end{equation}
by the mean value theorem. Now, because $c_{j}\in C^{1}$ and $K(j,k)\leq
Cj(k+1)$ we have
\begin{align*}
\sum_{j=0}^{\infty}\left\vert \dot{c}_{j}(\delta_{j})\right\vert  &  \leq
\sum_{j=0}^{\infty}\sum_{k=0}^{\infty}K(j+1,k)c_{k}(\delta_{j})c_{j+1}%
(\delta_{j})+\sum_{j=0}^{\infty}\sum_{k=0}^{\infty}K(j,k)c_{k}(\delta
_{j})c_{j}(\delta_{j})\\
&  +\sum_{j=1}^{\infty}\sum_{k=1}^{\infty}K(k,j)c_{k}(\delta_{j})c_{j}%
(\delta_{j})+\sum_{j=1}^{\infty}\sum_{k=1}^{\infty}K(k,j-1)c_{k}(\delta
_{j})c_{j-1}(\delta_{j})\\
&  \leq2\rho\sum_{j=0}^{\infty}(j+1)c_{j}(\delta_{j})+2\rho\sum_{j=1}^{\infty
}jc_{j}(\delta_{j}).
\end{align*}

Because all moments are finite, as shown by the previous lemma, we have
$c_{j}\leq C/j^{3}$ in particular (uniform in time). Then, it follows that
\[
\sum_{j=0}^{\infty}\left\vert \dot{c}_{j}(\delta_{j})\right\vert \leq5\rho
\sum_{j=1}^{\infty}j\frac{C}{j^{3}},
\]
showing that the sum on the right hand side of (\ref{cj-expo-ineq}) is
bounded. Therefore we can pass to the limit $\delta\rightarrow0$ in
(\ref{cj-expo-ineq}). Finally, we let $t\rightarrow\infty$ to finish the proof
of lemma.\bigskip
\end{proof}

After all the preparatory lemmas, the proof of Theorem \ref{expo-conv} now
becomes clear.

\begin{proof}
\textit{(of} \textit{Theorem \ref{expo-conv}) }By Proposition \ref{no-detB1},
for any solution $c_{j}(t)$ of (\ref{0-infode})-(\ref{infIC}) the time
derivative of each cluster density $c_{j}(t)~$approaches to zero showing that
the infinite time limit is an equilibrium $c_{j}^{e}$. But, the equilibrium is
a trivial solution of (\ref{0-infode})-(\ref{infIC}) having the same mass with
original time dependent distribution. By Lemma \ref{L-expo-conv} any two
solutions of (\ref{0-infode})-(\ref{infIC}) approach each other exponentially
fast in time. It follows that the solution $c(t)$ of (\ref{0-infode}%
)-(\ref{infIC}) converges to the $c^{e}$ exponentially fast as in
(\textit{\ref{tot_num-conv}}). Clearly any other solution with the same
initial mass also converges to the same equilibrium. Furthermore, the
equilibrium is unique. This is because if there was any other equilibrium
$d^{e},$ going through the algebra of Lemma \ref{L-expo-conv} for the
nonlinear equations $C^{e}$ and $D^{e}$, we would obtain%
\[
0\leq-(\tilde{a}-8L^{2}\varepsilon)\sum_{j=1}^{\infty}\left\vert C_{j}%
^{e}-D_{j}^{e}\right\vert
\]
from which we would conclude $C_{j}^{e}=D_{j}^{e}$ implying $c_{j}^{e}%
=d_{j}^{e}.$
\end{proof}

\subsection{Exponentially Fast Strong Convergence to Equilibrium}

Theorem \ref{expo-conv} relied heavily on the monotonicity properties of
$a_{j},b_{j}$ functions. It is desirable to relax these conditions. In our
next result, we show that when the total mass is sufficiently small, the
monotonicity assumption can be dropped and it can be shown that convergence to
the equilibrium is exponentially fast in the mass norm. More precisely we want
to prove the following.

\begin{theorem}
\label{expo-conv2}Consider the (\ref{0-infode})-(\ref{infIC}) system. Let the
hypothesis of hypothesis of Theorem 1 be satisfied with a given mass $\rho.$
Let the hypotheses H3 and H4 hold. Assume further that the mass of the system
is sufficiently small. Then the solutions of (\ref{0-infode})-(\ref{infIC})
converge to a unique equilibrium in the sense that
\[
\sum_{j\geq1}j\left\vert c_{j}(t)-c_{j}^{e}\right\vert \leq2\rho e^{-\gamma
t}.
\]

\end{theorem}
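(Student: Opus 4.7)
The plan is to mirror the structure of Subsection~4.1 (in particular Lemma~\ref{L-expo-conv} and Theorem~\ref{expo-conv}), but now in the strong norm $\|u\|_{1}=\sum_{j\geq 1}j|u_{j}|$ and with the monotonicity hypothesis on $a_{j},b_{j}$ replaced by a smallness condition on the mass. The crux of the argument is a contraction estimate: if $c(t)$ and $d(t)$ are two solutions of (\ref{0-infode})-(\ref{infIC}) with common mass $\rho$ and common total volume $\eta$, and $u=c-d$, then
\[
\frac{d}{dt}\sum_{j\geq 1}j|u_{j}(t)|\leq -\gamma\sum_{j\geq 1}j|u_{j}(t)|
\]
for some $\gamma(\tilde{a},\varepsilon,\rho,\eta)>0$, provided $\rho$ is small enough. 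Granted this estimate, the end of the proof is a literal replay of the end of Theorem~\ref{expo-conv}: setting $d(t)=c(t+\delta)$ and passing to $\delta\to 0$ (legitimate because Lemma~\ref{L-fin-mom} gives finite moments, hence a uniform bound on $\sum j|\dot c_{j}|$) shows $\sum j|\dot c_{j}(t)|\to 0$ and identifies every $\omega$-limit point as an equilibrium $c^{e}$ of mass $\rho$; uniqueness of $c^{e}$ follows from the contraction applied between two equilibria; and applying the contraction to the pair $(c(t),c^{e})$ together with the trivial initial bound $\|c(0)-c^{e}\|_{1}\leq 2\rho$ yields the advertised rate.

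To derive the contraction, I would compute $\tfrac{d}{dt}\|u\|_{1}=\sum j\,\mathrm{sgn}(u_{j})\dot u_{j}$ using the bilinear decomposition of the right-hand side of (\ref{infode}). Abbreviating $W_{j}(f)=jA(f)+b_{j}\sum_{k}f_{k}+\varepsilon\beta_{j}\tilde A(f)$ and $R_{j}(f)=a_{j}\sum_{k}kf_{k}+B(f)+\varepsilon\alpha_{j}\tilde B(f)$, each of the four terms in the EDG equation splits into a ``principal'' piece (with $W,R$ evaluated on $c$ and $u$ in the prefactor) and a ``cross'' piece (with $W,R$ evaluated on $u$ and $d$ in the prefactor). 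The principal part, after telescoping exactly as in the proof of Lemma~\ref{L-expo-conv}, reduces to $\sum_{j\geq 0}|u_{j}|\bigl[R_{j}(c)-W_{j}(c)\bigr]$; the dissipative piece $-jA(c)\leq -j\tilde a$ delivers $-\tilde a\sum j|u_{j}|$, while what remains of $R_{j}(c)-W_{j}(c)+jA(c)$ is bounded, under H3 and the bounds implicit in H4, by $C_{1}(\rho,\eta,\varepsilon)\sum j|u_{j}|$ with $C_{1}\to 0$ as $\rho,\eta,\varepsilon\to 0$.

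The real work is in the cross contribution. A decisive simplification is that $\sum_{k}u_{k}=0$ (conservation of volume) and $\sum_{k}ku_{k}=0$ (conservation of mass), so the $b_{j}$-term drops out of $W_{j}(u)$ and the $a_{j}$-term drops out of $R_{j}(u)$, leaving $W_{j}(u)=jA(u)+\varepsilon\beta_{j}\tilde A(u)$ and $R_{j}(u)=B(u)+\varepsilon\alpha_{j}\tilde B(u)$. Combined with $\sum_{k\geq 1}|u_{k}|\leq\sum_{k\geq 1}k|u_{k}|=\|u\|_{1}$ and the boundedness of $a_{j},\alpha_{j},\beta_{j}$, this gives $|A(u)|,|\tilde A(u)|,|B(u)|,|\tilde B(u)|\leq C\|u\|_{1}$, hence $|W_{j}(u)|\leq C(j+\varepsilon)\|u\|_{1}$ and $|R_{j}(u)|\leq C\|u\|_{1}$. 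After the analogous telescoping of the cross terms one obtains
\[
\bigl|\mathrm{cross}_{W}\bigr|\leq C\,M_{2}(d)\,\|u\|_{1},\qquad \bigl|\mathrm{cross}_{R}\bigr|\leq C\,(\rho+\eta)\,\|u\|_{1},
\]
where $M_{2}(d)$ is controlled uniformly in time by Lemma~\ref{L-fin-mom} in terms of $\rho,\eta,\varepsilon$. Assembling everything,
\[
\frac{d}{dt}\|u(t)\|_{1}\leq -\bigl[\tilde a-\Phi(\rho,\eta,\varepsilon)\bigr]\|u(t)\|_{1},
\]
and the small-mass hypothesis is precisely what guarantees $\Phi<\tilde a$, giving a strictly positive contraction rate $\gamma=\tilde a-\Phi$.

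The main obstacle, as in the corresponding coagulation-fragmentation argument of \cite{Fournier}, is the bookkeeping needed to track the dependence of $\Phi$ on $\rho,\eta,\varepsilon$ carefully enough to produce an explicit smallness threshold. In particular, the cross-$W$ term forces one to control the second moment of $d$ uniformly, so one must either re-examine the estimate of Lemma~\ref{L-fin-mom} to extract a bound that is itself small with $\rho$, or reorganize the computation to move the growing $j$-factor onto a quantity that vanishes with the mass. Once this accounting is done, no further ingredient is required beyond what was already used in Subsection~4.1.
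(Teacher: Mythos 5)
Your overall architecture is the same as the paper's: a contraction estimate for $\sum_{j\geq 1}j|c_j-d_j|$ obtained by plugging $g_j=j\,\mathrm{sgn}(e_j)$ into the moment identity of Lemma~1, exploiting $\sum_k e_k=\sum_k k e_k=0$ to kill the problematic $a_j\rho$ and $b_j$ contributions, bounding the remaining cross terms by $M_2$ of one of the solutions, then identifying the $\omega$-limit as an equilibrium via $d(t)=c(t+\delta)$ and getting uniqueness by applying the contraction to a pair of equilibria. All of that is the paper's Lemma~\ref{expo-conv-rho}, Proposition~\ref{no-detB2}, and the closing argument of Theorem~\ref{expo-conv2}.

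The gap you flag as ``the main obstacle'' is, however, not an optional bookkeeping exercise but precisely the ingredient you cannot borrow. You invoke Lemma~\ref{L-fin-mom} twice --- once to justify $\delta\to 0$ and once to control $M_2(d)$ --- but that lemma is proved under the monotonicity hypotheses of Section~4.1 and its proof uses $a_j\leq a_0$, which H4 does not give (H4 only gives $a_j\leq\bar a j$). Under H4 the $a_j$ contribution to $\dot M_2$ is $\sim 2\bar a\rho M_2$, which competes with the dissipative $-2\tilde a M_2$; the boundedness of $M_2$ is therefore itself a consequence of the smallness of $\rho$ (one needs $\rho<\tilde a/(\bar a+2\varepsilon\bar\alpha\bar\beta)$), and the paper proves this as a separate Lemma~\ref{M2-rho}. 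Your proposal correctly identifies that this must be done, but leaves it undone, and the misattribution to Lemma~\ref{L-fin-mom} suggests you did not notice that its proof genuinely breaks here.

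A smaller inaccuracy: you assert the ``boundedness of $a_j,\alpha_j,\beta_j$'' to get $|A(u)|,|B(u)|,\dots\leq C\|u\|_1$. H4 does not make these bounded; they may grow linearly. The desired estimates still hold because $a_j=O(j)$ combined with $\sum_j j|u_j|=\|u\|_1$ gives $|A(u)|\leq\bar a\|u\|_1$, but your subsequent claim $|R_j(u)|\leq C\|u\|_1$ is not uniform in $j$ when $\alpha_j\sim\bar\alpha j$; the extra factor of $j$ is what forces the $M_2(d)$ bound into the cross terms $S_8,S_{10}$ in the paper's computation, so the linear growth cannot be waved away by calling the coefficients bounded.
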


The mild growth conditions on the kernels stated in the theorem are as
follows.%
\[
(H4)\text{ \ \ }%
\begin{array}
[c]{c}%
a_{\min}\leq a_{j}\leq\bar{a}j\text{ and }b_{\min}\leq b_{j}\leq\bar{b}j\text{
for }j\geq1\\
\alpha_{\min}\leq\alpha_{j}\leq\bar{\alpha}j\text{ and }\beta_{\min}\leq
\beta_{j}\leq\bar{\beta}j\text{ for }j\geq1
\end{array}
\]
We first need a lemma showing the boundedness of the moments of solutions. As
in the previous subsection, we do not assume detailed balance and so we cannot
use the recursive relation as in Section 3. Also, differently from Section
4.1, due to the faster growth rate in the $a_{j}$ functions, we cannot, in
general, show finiteness of all moments for small mass uniformly. However,
with a modification of Lemma \ref{L-fin-mom}, we can show that for small mass,
the second moment is bounded.

\begin{lemma}
\label{M2-rho}\bigskip Let, for the system (\ref{0-infode})-(\ref{infIC}), the
conditions of Theorem 1 be satisfied and assume that the hypotheses $H3$ and
H4 hold. Then for small enough mass $\rho,$ the system has bounded second moment.
\end{lemma}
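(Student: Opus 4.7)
The plan is to adapt the approach of Lemma \ref{L-fin-mom} to the present setting, working with the truncated system (\ref{Tode0})-(\ref{TodeIC}) and deriving a differential inequality for $M_2^N(t)$ via Lemma 1. The essential new difficulty compared to Lemma \ref{L-fin-mom} is that under (H4) the functions $a_j$ and $\alpha_j$ may grow linearly in $j$, so the positive terms in $\dot M_2^N$ now carry coefficients that grow like $M_2^N$ itself; the smallness of $\rho$ is exactly what will allow these to be absorbed into the dissipative term generated by the lower bound $A^N \ge \tilde a\,\eta$ coming from (H3).

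Concretely, taking $g_j=j^2$ in Lemma 1 and using $g_{j-1}-g_j=1-2j$ and $g_{j+1}-g_j=2j+1$ together with the decompositions
\[
\sum_{k=0}^{N-1}K(j,k)c_k^N = jA^N + b_j\eta^N + \varepsilon \beta_j \tilde A^N, \qquad \sum_{k=1}^{N}K(k,j)c_k^N = \rho^N a_j + B^N + \varepsilon \alpha_j \tilde B^N,
\]
where $A^N,B^N,\tilde A^N,\tilde B^N$ are the truncated analogues of the quantities from Section 4.1, and discarding the non-positive contributions from $b_j\eta^N$ and $\varepsilon\beta_j\tilde A^N$ in the first sum, I obtain
\[
\dot M_2^N \le -A^N\sum_{j=1}^{N}(2j-1)j\, c_j^N + \sum_{j=0}^{N-1}(2j+1)c_j^N\bigl(\rho^N a_j + B^N + \varepsilon \alpha_j \tilde B^N\bigr).
\]

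The core estimates are then twofold: first, $A^N \ge \tilde a\,\eta/2$ for $N$ large (by $\eta$-conservation together with $Nc_N^N \le \rho$), which bounds the leading negative term by $-\tilde a\,\eta\, M_2^N + \tilde a\,\eta\,\rho$; second, the bounds $a_j \le \bar a j$ and $\alpha_j \le \bar\alpha j$ from (H4) together with $B^N \le \bar b\rho$, $\tilde B^N \le \bar\beta \rho$ and the identity $\sum_{j\geq 1}(2j+1)jc_j^N = 2M_2^N + \rho^N$ yield
\[
\sum_{j=0}^{N-1}(2j+1)c_j^N\bigl(\rho^N a_j + B^N + \varepsilon \alpha_j \tilde B^N\bigr) \le 2\bigl(\bar a + \varepsilon \bar\alpha\bar\beta\bigr)\rho\, M_2^N + C(\rho,\eta),
\]
where $C(\rho,\eta)$ depends on $\rho$, $\eta$, $a_0$, $\alpha_0$, and the kernel bounds but not on $M_2^N$. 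Combining,
\[
\dot M_2^N \le -\bigl(\tilde a\,\eta - 2(\bar a + \varepsilon\bar\alpha\bar\beta)\rho\bigr) M_2^N + C(\rho,\eta).
\]

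Provided $\rho$ is small enough that $\tilde a\,\eta > 2(\bar a + \varepsilon\bar\alpha\bar\beta)\rho$, the coefficient in front of $M_2^N$ is strictly negative, so Gronwall's inequality gives a uniform bound on $M_2^N(t)$ in $N$ and $t$, and passing to $N\to\infty$ yields the boundedness of $M_2(t)$. The main obstacle is the bookkeeping of which positive contributions produce a coefficient of $M_2^N$ proportional to $\rho$ (hence absorbable under the smallness hypothesis), and which contribute only bounded constants; this is precisely where the argument departs from Lemma \ref{L-fin-mom}, in which the non-increasing assumption on $a_j$ allowed the bound $a_j \le a_0$ and produced no $\rho$-dependent coefficient in front of $M_2^N$ at all.
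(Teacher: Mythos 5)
Your proof is correct and follows essentially the same route as the paper's: apply Lemma 1 with $g_j=j^2$, decompose $\sum_k K(j,k)c_k^N=jA^N+b_j\eta^N+\varepsilon\beta_j\tilde A^N$ and $\sum_k K(k,j)c_k^N=\rho^N a_j+B^N+\varepsilon\alpha_j\tilde B^N$, extract the dissipative $-A^N$ contribution using $A^N\geq\tilde a\,\eta$ (up to a truncation correction), bound the $a_j$- and $\alpha_j$-weighted positive terms by $O(\rho)\,M_2^N$ via (H4), and close with a Gronwall argument under the smallness assumption on $\rho$. The only differences from the paper are cosmetic — you drop the $b_j\eta^N$ and $\varepsilon\beta_j\tilde A^N$ contributions outright rather than estimating them (which only changes the unimportant constants), and you make the truncation step a bit more explicit, where the paper proceeds formally and refers back to the rigorous truncation argument of Lemma \ref{L-fin-mom}.
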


\begin{proof}
We show this by formal computations $c_{j}$. It can be made rigorous by
truncated solutions in just the same way as in Section 4.1. Setting
$g_{j}=j^{2}$ in Lemma 1 we get%
\begin{align*}
\sum_{j\geq1}j^{2}\dot{c}_{j}  &  =\sum_{j\geq0}\left(  (j+1)^{2}%
-j^{2}\right)  (a_{j}\rho+B(c)+\varepsilon\alpha_{j}\tilde{B}(c))c_{j}\\
&  +\sum_{j\geq1}\left(  (j-1)^{2}-j^{2}\right)  (jA(c)+b_{j}+\varepsilon
\beta_{j}\tilde{A}(c))c_{j}%
\end{align*}%
\begin{align*}
&  =\sum_{j\geq0}(2j+1)(a_{j}\rho+B(c)+\varepsilon\alpha_{j}\tilde{B}%
(c))c_{j}+\sum_{j\geq1}(-2j+1)(jA(c)+b_{j}+\varepsilon\beta_{j}\tilde
{A}(c))c_{j}\\
&  \leq2\bar{a}\rho\sum_{j\geq0}j^{2}c_{j}+\rho(a_{0}+2\bar{a}\rho)+2\rho
B(c)+B(c)+2\varepsilon\bar{\alpha}\bar{\beta}\rho\sum_{j\geq0}j^{2}%
c_{j}+\varepsilon(\alpha_{0}+\bar{\alpha}\rho)\bar{\beta}\rho\\
&  -2\tilde{a}\sum_{j\geq1}j^{2}c_{j}-2\sum_{j\geq1}jb_{\min}c_{j}%
-2\varepsilon\sum_{j\geq1}j\beta_{\min}c_{j}\tilde{A}(c)+\rho
A(c)+B(c)+\varepsilon\bar{\beta}\rho\tilde{A}(c),
\end{align*}
where, in the fourth and fifth lines, we used $A(c)=\sum_{j\geq0}a_{j}%
c_{j}\leq a_{0}+\sum_{j\geq1}a_{j}c_{j}\leq a_{0}+\bar{a}\rho$ and
$B(c)=\sum_{j\geq1}b_{j}c_{j}\leq\bar{b}\rho\,$\ (similarly for $\tilde{A}(c)$
and $\tilde{B}(c)).$ After rearranging the terms we have%
\[
\sum_{j\geq1}j^{2}\dot{c}_{j}\leq2(\bar{a}\rho+2\varepsilon\bar{\alpha}%
\bar{\beta}\rho-\tilde{a})\sum_{j\geq1}j^{2}c_{j}+\rho\left(  2a_{0}+3\bar
{a}\rho+2\bar{b}\rho+2\bar{b}+2\varepsilon\bar{\beta}(\alpha_{0}+\bar{\alpha
})\rho-b_{\min}\right)  .
\]
If $\rho<\frac{\tilde{a}}{\bar{a}+2\varepsilon\bar{\alpha}\bar{\beta}}$ then
the differential inequality yields that the second moment is bounded and in
particular
\[
M_{2}(\infty)\leq\rho\frac{\left(  2a_{0}+3\bar{a}\rho+2\bar{b}\rho+2\bar
{b}+2\varepsilon\bar{\beta}(\alpha_{0}+\bar{\alpha})\rho-b_{\min}\right)
}{2(\bar{a}\rho+2\varepsilon\bar{\alpha}\bar{\beta}\rho-\tilde{a})}.
\]

\end{proof}

\textbf{Remark.} It is worth saying that there is nothing special about the
$M_{2}$. The proof can be extended, for a given $p>0,$ i.e., $M_{p}%
(\infty)<C\,\ $so long the total mass $\rho$ is small enough. However, the
smallness requirement will depend on the value of $p$.

Next, as in the previous section we show the contraction property of solutions.

\begin{lemma}
\label{expo-conv-rho}\bigskip Let the conditions of Theorem 1 be satisfied and
$c_{j}$ and $d_{j}$ be two solutions of the system (\ref{0-infode}%
)-(\ref{infIC}) system with the same initial mass. Assume that the hypotheses
$H3$ and H4 hold with the total mass (density) $\rho$ and $\varepsilon$ are
small enough. Then the two solutions approach to each other in the sense that
\[
\sum_{j\geq1}j\left\vert c_{j}(t)-d_{j}(t)\right\vert \leq2\rho e^{-\gamma
t}.
\]

\end{lemma}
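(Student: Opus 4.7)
The plan is to adapt the contraction argument of Lemma~\ref{L-expo-conv}, but now applied directly to the mass-weighted distance $U(t) := \sum_{j\geq 1} j|e_j(t)|$, where $e_j = c_j - d_j$. The starting point is the formal identity $\frac{d}{dt}|e_j| = \mathrm{sgn}(e_j)\dot{e}_j$, giving $\dot U = \sum_{j\geq 1} j\,\mathrm{sgn}(e_j)\dot{e}_j$. After substituting the evolution of $e_j$ with kernel $K(j,k) = ja_k + b_j + \varepsilon\beta_j\alpha_k$ and the conservation laws $\sum c_j = \sum d_j$, $\sum jc_j = \sum jd_j = \rho$, the right-hand side splits into two natural groups: first, terms linear in $e_{j-1}, e_j, e_{j+1}$ whose coefficients involve only the $c$-moments $A(c), B(c), \tilde A(c), \tilde B(c), \rho$; second, ``cross'' terms of the form $d_{j\pm 1}\cdot F(e)$ where $F\in\{A,B,\tilde A,\tilde B\}$ is linear in $e$.

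For the first group, the estimates $\mathrm{sgn}(e_j)e_{j\pm 1} \leq |e_{j\pm 1}|$ together with the index shifts $j \to j\pm 1$ produce the decisive cancellation: the gain term $\sum_{j\geq 1} j|e_{j+1}|(j+1)A(c)$ combines with the diagonal sink $-\sum_{j\geq 1} j^2 A(c)|e_j|$ to yield exactly $-A(c)\sum_{j\geq 1} j|e_j| \leq -\tilde a\,U$, by virtue of $(j+1)-j=1$ and the lower bound $A(c)\geq\tilde a$ from H3. The analogous cancellations involving $b_j$ and $\varepsilon\beta_j\tilde A(c)$ contribute further nonpositive terms. The remaining gain pieces $e_{j-1}[\rho a_{j-1} + B(c) + \varepsilon\alpha_{j-1}\tilde B(c)]$, after reindexing and combining with the corresponding diagonal $-e_j$ contributions, leave an unweighted remainder of the form $\sum_{j\geq 0}|e_j|[\rho a_j + B(c) + \varepsilon\alpha_j\tilde B(c)]$; this is controlled by $C(\rho + \varepsilon\rho)\,U$ using H4 ($a_j \leq \bar a j$, $B(c)\leq \bar b\rho$, etc.), together with $|e_0| \leq \sum_{j\geq 1}|e_j|$ (via $\sum e_j = 0$) and $|e_j| \leq j|e_j|$.

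For the cross terms the central estimates are $|A(e)| \leq \bar a\,U$, $|B(e)|\leq \bar b\,U$, and analogously for $|\tilde A(e)|, |\tilde B(e)|$, each a direct consequence of H4. Combined with $\sum jd_j = \rho$ and the second-moment bound $\sum j^2 d_j \leq M_2(d)$ provided by Lemma~\ref{M2-rho}, each cross term contributes at most $C(\rho + M_2(d) + \varepsilon M_2(d))\,U$. Crucially, the proof of Lemma~\ref{M2-rho} actually yields $M_2(d) = O(\rho)$ as $\rho \to 0$, so the cross contribution is altogether $O(\rho + \varepsilon)\,U$. Combining both groups produces
\[
\dot U \leq -\bigl(\tilde a - C_1\rho - C_2\varepsilon\bigr)\,U,
\]
for explicit constants $C_1, C_2$ depending only on the bounds in H3--H4. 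For $\rho$ and $\varepsilon$ small enough, $\gamma := \tilde a - C_1\rho - C_2\varepsilon > 0$, and Gronwall's inequality gives $U(t) \leq U(0) e^{-\gamma t}$; the trivial bound $U(0) \leq \sum jc_j(0) + \sum jd_j(0) = 2\rho$ closes the proof.

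The main obstacle I expect is the bookkeeping in the first group: the $j$-weight makes the telescoping less transparent than in Lemma~\ref{L-expo-conv}, and one must verify carefully that the $j(j+1)A(c)$ gain from the $e_{j+1}$ flux and the $-j^2 A(c)$ diagonal sink cancel so as to leave only an $O(j|e_j|)$ residue, with no uncontrollable $j^2|e_j|$ term surviving. A secondary technical point is that the cross-term bounds inherently require a second-moment control on $d$, which is precisely what Lemma~\ref{M2-rho} delivers; this explains why smallness of $\rho$ (rather than of $\varepsilon$ alone) is unavoidable for the present argument.
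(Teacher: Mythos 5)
Your proposal is correct and follows essentially the same route as the paper's proof: work with $U(t)=\sum_{j\geq1}j|c_j-d_j|$, use $\frac{d}{dt}|e_j|=\operatorname{sgn}(e_j)\dot e_j$ together with the conservation laws $\sum e_j=\sum je_j=0$, extract the damping $-A(c)U\leq-\tilde a\,U$ from the $ja_k$ part of the kernel via the $(j-1)j$ versus $j^2$ cancellation, control the cross terms $d_j\cdot F(e)$ with the second-moment bound $M_2(d)=O(\rho)$ from Lemma~\ref{M2-rho}, and close with Gronwall and $U(0)\leq 2\rho$. The paper organizes the same computation as an explicit $S_1,\dots,S_{10}$ decomposition with a couple of exact pairwise cancellations ($S_1$ against the negative part of $S_5$, $S_6$ against part of $S_3$) where you instead bound each piece directly by $O(\rho+\varepsilon)U$; this is a cosmetic bookkeeping difference, not a different argument.
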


The general idea of proof is similar to the contraction result in Section 4.1.
However, in this case it is more convenient to use difference of individual
cluster densities (not the tail) to measure the difference of time dependent
solutions, i.e., the function $\xi_{1}(t)=\sum_{j\geq1}j\left\vert
c_{j}(t)-d_{j}(t)\right\vert $. The goal is to show that its derivative
satisfies a differential inequality which yields the result.

\begin{proof}
Let $c_{j}$ and $d_{j}$ and be the time dependent and equilibrium solutions
and $e_{j}=c_{j}-d_{j}$ be the difference. Setting $g_{j}=jsgn(e_{j})$ in
Lemma 1 (with $N\rightarrow\infty)$ one gets%
\begin{align*}
\sum_{j\geq1}jsgn(e_{j})\dot{c}_{j}  &  =\sum_{j\geq0}\left(  (j+1)sgn(e_{j+1}%
)-jsgn(e_{j})\right)  (a_{j}\rho+B(c)+\varepsilon\alpha_{j}\tilde{B}%
(c))c_{j}\\
&  +\sum_{j\geq1}\left(  (j-1)sgn(e_{j-1})-jsgn(e_{j})\right)  (jA(c)+b_{j}%
+\varepsilon\beta_{j}\tilde{A}(c))c_{j},
\end{align*}
Subtracting from above the equation for $\sum_{j\geq1}jsgn(e_{j})\dot{d}_{j}$
and noting $A(c)c_{j}-A(d)d_{j}=e_{j}A(c)+d_{j}A(e)$ and $B(c)c_{j}%
-B(d_{j})d_{j}=e_{j}B(c)+d_{j}B(e)$ (and using similar notations for
$\tilde{A}$ and $\tilde{B})$ we get%
\begin{align}
\sum_{j\geq1}j\left\vert \dot{e}_{j}\right\vert  &  =\sum_{j\geq0}\left(
(j+1)sgn(e_{j+1})-jsgn(e_{j})\right)  (\rho a_{j}e_{j}+e_{j}B(c)+d_{j}%
B(e))\label{norm-dif-rho}\\
&  +\sum_{j\geq1}\left(  (j-1)sgn(e_{j-1})-jsgn(e_{j})\right)  (je_{j}%
A(c)+jd_{j}A(e)+b_{j}e_{j})\nonumber\\
&  +\varepsilon\sum_{j\geq0}\left(  (j+1)sgn(e_{j+1})-jsgn(e_{j})\right)
\alpha_{j}(e_{j}\tilde{B}(c)+d_{j}\tilde{B}(e)),\\
&  +\varepsilon\sum_{j\geq1}\left(  (j-1)sgn(e_{j-1})-jsgn(e_{j})\right)
\beta_{j}(e_{j}\tilde{A}(c)+d_{j}\tilde{A}(e)),
\end{align}
where in (\ref{norm-dif-rho}) we implicitly used $\rho=\sum_{j\geq0}%
jc_{j}=\sum_{j\geq0}jd_{j}$ and $1=\sum_{j\geq0}c_{j}=\sum_{j\geq0}d_{j}$.
Upon distributing the $(j\pm1)sgn(e_{j\pm1})-jsgn(e_{j})$ terms over the terms
inside the parenthesis on the right hand side of (\ref{norm-dif-rho}), in each
line, we produce a total of $10$ terms which we denote by $S_{1},...,S_{10}.$
For each $S$ term we obtain an inequality. For $S_{1},$ using $\left\vert
sgn(e_{j+1})\right\vert \leq1$ and $sgn(e_{j})e_{j}=\left\vert e_{j}%
\right\vert ,$ we write%
\begin{align*}
S_{1}  &  =\sum_{j\geq0}\left(  (j+1)sgn(e_{j+1})-jsgn(e_{j})\right)  \rho
a_{j}e_{j}\leq\rho\sum_{j\geq0}a_{j}\left\vert e_{j}\right\vert \\
&  \leq\rho\sum_{j\geq1}(a_{j}+a_{0})\left\vert e_{j}\right\vert ,
\end{align*}
where in the second line we used $\sum_{j\geq0}e_{j}=0$ (conservation of
volume) giving $\left\vert e_{0}\right\vert \leq\sum_{j\geq1}\left\vert
e_{j}\right\vert .$ Similarly, for $S_{2},$ one has%
\[
S_{2}=\sum_{j\geq0}\left(  (j+1)sgn(e_{j+1})-jsgn(e_{j})\right)  e_{j}%
B(c)\leq2B(c)\sum_{j\geq1}\left\vert e_{j}\right\vert .
\]
For $S_{3}$ we observe $\left\vert B(e)\right\vert \leq\sum_{j\geq1}%
b_{j}\left\vert e_{j}\right\vert $ and obtain%
\begin{align*}
S_{3}  &  =\sum_{j\geq0}\left(  (j+1)sgn(e_{j+1})-jsgn(e_{j})\right)
d_{j}B(e)\\
&  \leq\sum_{j\geq0}(2j+1)d_{j}\sum_{k\geq1}b_{k}\left\vert e_{k}\right\vert .
\end{align*}
For $S_{4}$ term we again use $\left\vert sgn(e_{j-1})\right\vert \leq1$ and
find
\begin{align*}
S_{4}  &  =\sum_{j\geq1}\left(  (j-1)sgn(e_{j-1})-jsgn(e_{j})\right)
je_{j}A(c)\\
&  \leq-\sum_{j\geq1}j\left\vert e_{j}\right\vert A(c).
\end{align*}
The $S_{5}$ term, using $\left\vert A(e)\right\vert \leq\sum_{j\geq0}%
a_{j}\left\vert e_{j}\right\vert \leq\sum_{j\geq1}(a_{j}+a_{0})\left\vert
e_{j}\right\vert ,$ gives%
\begin{align*}
S_{5}  &  =\sum_{j\geq1}\left(  (j-1)sgn(e_{j-1})-jsgn(e_{j})\right)
jd_{j}A(e)\\
&  \leq\sum_{j\geq1}(2j-1)jd_{j}\sum_{k\geq1}(a_{k}+a_{0})\left\vert
e_{k}\right\vert .
\end{align*}
And, $S_{6}$ reads%
\[
S_{6}=\sum_{j\geq1}\left(  (j-1)sgn(e_{j-1})-jsgn(e_{j})\right)  b_{j}%
e_{j}\leq-\sum_{j\geq1}b_{j}\left\vert e_{j}\right\vert .
\]

Looking at the terms one notices that $S_{6}$ cancels part of the term on the
right hand side of $S_{3}$ since $\sum_{j\geq0}d_{j}=1$ which leaves
$\sum_{j\geq0}(2j)d_{j}\sum_{k\geq1}b_{k}\left\vert e_{k}\right\vert .$
Similarly, $S_{1}$ cancels the negative part on the right hand side of $S_{5}$
since $\sum_{j\geq1}jd_{j}=\rho.$ Combining with the rest of the terms in
(\ref{norm-dif-rho}) we get%
\begin{align}
\sum_{j\geq1}j\left\vert \dot{e}_{j}\right\vert  &  \leq2B(c)\sum_{j\geq
1}\left\vert e_{j}\right\vert +\sum_{j\geq0}(2j)d_{j}\sum_{k\geq1}%
b_{k}\left\vert e_{k}\right\vert \label{norm-dif-der2}\\
&  -\sum_{j\geq1}j\left\vert e_{j}\right\vert A(c)+\sum_{j\geq1}(2j)jd_{j}%
\sum_{k\geq1}(a_{k}+a_{0})\left\vert e_{k}\right\vert \nonumber\\
&  +S_{7}+S_{8}+S_{9}+S_{10}.
\end{align}

We now estimate the perturbation terms $S_{7},...,S_{10}$ in a similar
fashion. $S_{7}$ and $S_{8}$ are given by%
\[
S_{7}=\varepsilon\sum_{j\geq0}\left(  (j+1)sgn(e_{j+1})-jsgn(e_{j})\right)
\alpha_{j}e_{j}\tilde{B}(c)\leq\varepsilon\sum_{j\geq1}(\alpha_{0}+\bar
{\alpha}j)\left\vert e_{j}\right\vert \tilde{B}(c),
\]%
\begin{align*}
S_{8}  &  =\varepsilon\sum_{j\geq0}\left(  (j+1)sgn(e_{j+1})-jsgn(e_{j}%
)\right)  \alpha_{j}d_{j}\tilde{B}(e)\leq\varepsilon\sum_{j\geq0}\left(
2j+1\right)  \alpha_{j}d_{j}\left\vert \tilde{B}(e)\right\vert \\
&  \leq\varepsilon\sum_{j\geq0}(2j^{2}\bar{\alpha}+\bar{\alpha}j+\alpha
_{0})d_{j}\sum_{k\geq1}\beta_{k}\left\vert e_{k}\right\vert ,
\end{align*}
where we used $\sum_{j\geq0}\alpha_{j}\left\vert e_{j}\right\vert \leq
\sum_{j\geq1}(\alpha_{j}+\alpha_{0})\left\vert e_{j}\right\vert $ (since
$\left\vert e_{0}\right\vert \leq\sum_{j\geq1}\left\vert e_{j}\right\vert ).$
Similarly, $S9$ and $S10$ are given by%
\[
S_{9}=\varepsilon\sum_{j\geq1}\left(  (j-1)sgn(e_{j-1})-jsgn(e_{j})\right)
\beta_{j}e_{j}\tilde{A}(c)\leq-\varepsilon\sum_{j\geq1}\beta_{\min}\left\vert
e_{j}\right\vert \tilde{A}(c),
\]%
\[
S_{10}=\varepsilon\sum_{j\geq1}\left(  (j-1)sgn(e_{j-1})-jsgn(e_{j})\right)
\beta_{j}d_{j}\tilde{A}(e)\leq\varepsilon\sum_{j\geq1}(2j-1)d_{j}\beta_{j}%
\sum_{k\geq0}\alpha_{k}\left\vert e_{k}\right\vert .
\]
By the bounds given in the theorem $B(c)\leq\sum_{j\geq1}\bar{b}jc_{j}\leq
\bar{b}\rho$ and $A(c)\geq\sum_{j\geq1}\tilde{a}c_{j}\geq\tilde{a}.$ Also,
$\sum_{j\geq1}\left\vert e_{j}\right\vert \leq\sum_{j\geq1}j\left\vert
e_{j}\right\vert $ and $\sum_{k\geq1}(a_{k}+a_{0})\left\vert e_{k}\right\vert
\leq(a_{0}+\bar{a})\sum_{k\geq1}k\left\vert e_{k}\right\vert .$ Then using
$\left\vert e_{0}\right\vert \leq\sum_{j\geq1}\left\vert e_{j}\right\vert $
several times (\ref{norm-dif-der2}) reduces to
\begin{align*}
\sum_{j\geq1}j\left\vert \dot{e}_{j}\right\vert  &  \leq2\bar{b}\rho
\sum_{j\geq1}j\left\vert e_{j}\right\vert +2\rho\bar{b}\sum_{j\geq
1}j\left\vert e_{j}\right\vert -a_{\min}\sum_{j\geq1}j\left\vert
e_{j}\right\vert +2M_{2}(a_{0}+\bar{a})\sum_{k\geq1}k\left\vert e_{k}%
\right\vert \\
&  +\varepsilon\left(  (\alpha_{0}+\bar{\alpha})\rho\bar{\beta}+2\bar{\alpha
}\bar{\beta}M_{2}+\alpha_{0}\bar{\beta}d_{0}+\bar{\alpha}\bar{\beta}%
\rho\right)  \sum_{k\geq1}k\left\vert e_{k}\right\vert +\varepsilon2\bar
{\beta}M_{2}(\alpha_{0}+\bar{\alpha})\sum_{k\geq1}k\left\vert e_{k}\right\vert
.
\end{align*}
From Lemma \ref{M2-rho} we know that $M_{2}=\sum_{j\geq1}j^{2}d_{j}\leq
N(\rho):=\frac{\rho\left(  2a_{0}+3\bar{a}\rho+2\bar{b}\rho+2\bar
{b}+2\varepsilon\bar{\beta}(\alpha_{0}+\bar{\alpha})\rho-b_{\min}\right)
}{2(\bar{a}\rho+2\varepsilon\bar{\alpha}\bar{\beta}\rho-\tilde{a})}$. Hence
one gets the differential inequality%
\begin{equation}
\sum_{j\geq1}j\left\vert \dot{e}_{j}\right\vert \leq\left(  4\bar{b}%
\rho+2(a_{0}+\bar{a})N(\rho)+\varepsilon\left(  (\alpha_{0}+2\bar{\alpha}%
)\rho\beta+2(2\bar{\alpha}+\alpha_{0})\bar{\beta}M_{2}+\alpha_{0}\bar{\beta
}d_{0}+\bar{\alpha}\bar{\beta}\rho\right)  -a_{\min}\right)  \sum_{j\geq
1}j\left\vert e_{j}\right\vert . \label{wght-norm-der}%
\end{equation}
It is then clear that, for $\rho$ and $\varepsilon$ small enough, the
parenthesis on the right hand side of (\ref{wght-norm-der}) has a negative
value (say $-\gamma<0)$ giving
\[
\sum_{j\geq1}j\left\vert \dot{e}_{j}\right\vert \leq\sum_{j\geq1}j\left\vert
e_{j}(0)\right\vert e^{-\gamma t}\leq\sum_{j\geq1}j(c_{j}(0)+d_{j}%
(0))e^{-\gamma t}\leq2\rho e^{-\gamma t}%
\]
which proves the lemma.
\end{proof}

As the last ingredient for the theorem, we have the existence of the
equilibrium solutions which is analogous to Proposition \ref{no-detB1}.

\begin{proposition}
\label{no-detB2}Assume the conditions of Theorem 1. Let hypotheses H3, H4 hold
and the total mass be small enough. Then for any solution satisfying
(\ref{0-infode})-(\ref{infIC})\ one has%
\[
\lim_{t\rightarrow\infty}\left\vert \dot{c}_{j}(t)\right\vert =0\text{
\ \ }(\text{for }j\geq0)
\]

\end{proposition}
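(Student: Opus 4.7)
The plan is to mirror the argument of Proposition~\ref{no-detB1}, replacing the weak contraction of Lemma~\ref{L-expo-conv} by the strong $X_1$-contraction of Lemma~\ref{expo-conv-rho}. Fix a small $\delta>0$ and set $d_j(s):=c_j(s+\delta)$; by the semigroup property and mass conservation, $d$ is a solution of (\ref{0-infode})-(\ref{infIC}) with the same total mass as $c$. Applying the differential inequality that underlies Lemma~\ref{expo-conv-rho} to $e_j=d_j-c_j$ (without the last step that bounds $\sum j|e_j(0)|$ by $2\rho$) and then dividing through by $\delta$ yields
\begin{equation*}
\sum_{j\geq 1} j\,\frac{|c_j(t+\delta)-c_j(t)|}{\delta}\ \leq\ e^{-\gamma t}\sum_{j\geq 1} j\,\frac{|c_j(\delta)-c_j(0)|}{\delta}.
\end{equation*}

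The next step is to pass to the limit $\delta\to 0$. By the mean value theorem the right hand side equals $e^{-\gamma t}\sum_{j\geq 1} j\,|\dot c_j(\delta_j)|$ for suitable $\delta_j\in(0,\delta)$, so the task reduces to bounding this sum uniformly for small $\delta$. Using H3 together with the upper bounds in H4, each of $\sum_k K(j,k)c_k$ and $\sum_k K(k,j)c_k$ is controlled by $C(\rho)\,j$ for $j\geq 1$, so the ODE gives
\begin{equation*}
j\,|\dot c_j(s)|\ \leq\ C\,j(j+1)\bigl(c_{j+1}(s)+c_j(s)+c_{j-1}(s)\bigr),
\end{equation*}
whose sum in $j$ is dominated by $C(1+M_2(s))$. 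Lemma~\ref{M2-rho} provides $M_2(s)\leq N(\rho)<\infty$ uniformly in time, so the desired uniform bound holds. Passing to the limit in the first display (via the bounded convergence theorem on the counting measure) gives
\begin{equation*}
\sum_{j\geq 1} j\,|\dot c_j(t)|\ \leq\ C\,e^{-\gamma t}.
\end{equation*}

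For each fixed $j\geq 1$ this yields $|\dot c_j(t)|\leq Ce^{-\gamma t}/j\to 0$ as $t\to\infty$. For $j=0$, volume conservation forces $\dot c_0=-\sum_{j\geq 1}\dot c_j$, hence $|\dot c_0(t)|\leq \sum_{j\geq 1}|\dot c_j(t)|\leq \sum_{j\geq 1}j\,|\dot c_j(t)|\leq Ce^{-\gamma t}\to 0$ as well, finishing the proof.

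The main technical obstacle is the uniform bound on $\sum_{j\geq 1} j\,|\dot c_j(s)|$. In contrast to Proposition~\ref{no-detB1}, where Lemma~\ref{L-fin-mom} supplies finite moments of every order (and hence fast polynomial decay of $c_j$), here only $M_2$ is available; the $|\dot c_j|$ estimate therefore has to be tight enough that after multiplication by the weight $j$ in the $X_1$-norm, the summability reduces exactly to the finiteness of $M_2$, which is precisely what the small-mass hypothesis of Lemma~\ref{M2-rho} supplies.
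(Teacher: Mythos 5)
Your high-level strategy is exactly what the paper intends (the paper's own proof of Proposition~\ref{no-detB2} is "skipped" and said to follow the steps of Proposition~\ref{no-detB1}, which is the argument you reproduce with the strong contraction of Lemma~\ref{expo-conv-rho} in place of the weak one). However, the passage to the limit $\delta\to 0$ has a genuine gap. After invoking the mean value theorem you face the sum $\sum_{j\geq 1} j|\dot c_j(\delta_j)|$ where $\delta_j\in(0,\delta)$ depends on $j$. You then bound the single-time quantity $\sum_j j|\dot c_j(s)|$ by $C(1+M_2(s))$; this is correct for a \emph{fixed} $s$, but $\sum_j j|\dot c_j(\delta_j)|$ is not of this form since each summand is evaluated at a different time. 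For the bounded-convergence step you would need a $\delta$-independent summable dominating sequence, i.e., $g_j\geq \sup_s j|\dot c_j(s)|$ with $\sum_j g_j<\infty$. Since $j|\dot c_j(s)|\leq Cj(j+1)\bigl(c_{j+1}(s)+c_j(s)+c_{j-1}(s)\bigr)$, that amounts to $\sum_j j^2\sup_s c_j(s)<\infty$, which requires $\sup_s c_j(s)$ to decay faster than $j^{-3}$. The uniform $M_2$ bound of Lemma~\ref{M2-rho} gives only $\sup_s c_j(s)\leq N(\rho)/j^2$, hence $\sup_s j|\dot c_j(s)|\leq C$ — not summable. So the dominated/bounded convergence argument fails as written, and your closing remark that "the summability reduces exactly to the finiteness of $M_2$" is not justified by the argument you gave.

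The gap is easy to close, and your instinct that $M_2$ alone should suffice is in fact vindicated by a small change. Avoid the mean value theorem and use the integral representation $c_j(\delta)-c_j(0)=\int_0^\delta \dot c_j(s)\,ds$, so that, by Tonelli,
\[
\sum_{j\geq 1} j\,\frac{|c_j(\delta)-c_j(0)|}{\delta}\ \leq\ \frac{1}{\delta}\int_0^\delta \sum_{j\geq 1} j\,|\dot c_j(s)|\,ds\ \leq\ \sup_{s\in[0,\delta]}\sum_{j\geq 1} j\,|\dot c_j(s)|\ \leq\ C\bigl(1+N(\rho)\bigr),
\]
a bound uniform in $\delta$. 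Then apply Fatou's lemma on the left-hand side of your first display (you do not need the full limit to exist; the inequality suffices) to conclude $\sum_{j\geq 1} j\,|\dot c_j(t)|\leq C e^{-\gamma t}$, after which your treatment of $j\geq 1$ and the $j=0$ case via volume conservation is correct. The alternative repair — invoking the Remark after Lemma~\ref{M2-rho} to bound $M_p$ uniformly for some $p>3$ at the cost of a tighter smallness condition on $\rho$, giving the pointwise decay $c_j(s)\leq C/j^p$ — is the direct analogue of the paper's Proposition~\ref{no-detB1} proof (which uses $c_j(s)\leq C/j^3$ from Lemma~\ref{L-fin-mom}); your route, once fixed as above, is actually more economical because it stays with $M_2$.
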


The proof follows similar steps to Proposition \ref{no-detB1}, hence we skip
it. Collecting all of the results we can now prove the main theorem of this subsection.

\begin{proof}
\textit{(of Theorem \ref{expo-conv2})}. For mass sufficiently small, by the
contraction property, any two solutions approach each other exponentially fast
in the sense of Lemma \ref{expo-conv-rho}. By Lemma \ref{no-detB2} each
solution goes to the equilibrium solution in the large time. Hence all time
dependent solutions with equal mass converge to the same equilibrium solution.
\end{proof}

\section{CONCLUSION}

In this article, we studied the large time behavior of the EDG\ system,
particularly the convergence of solutions to equilibrium with explicit
convergence rates where possible. Due to the complexities arising in a fully
general kernel we focussed on two special but fairly general classes of
separable kernels (in product and sum forms).

For the first class of kernels $K(j,k)=b_{j}a_{k},$ we showed the existence of
equilibria under the assumption of detailed balance. The crucial finding is
that not all initial mass values can support equilibrium solution. Much like
in the Becker-Doring system, above a critical mass $\rho_{c},$ the EDG system
undergoes a dynamic phase transition. By employing well known method of
entropy functionals we showed the strong convergence of solutions to the
equilibrium for initial masses below the critical mass and weak convergence to
the critical equilibrium density for initial masses above the critical mass.
The question of how fast these convergences occur in each case is left open
for future study.

For the second class of kernels given by $K(j,k)=ja_{k}+b+\varepsilon\beta
_{j}\alpha_{k}$, we showed the existence of equilibrium as a by-product of a
contraction property which followed from the monotonicity of $b_{j},a_{j}$ an
assumption motivated by the heuristic interpretations that $a_{j,}b_{j}$
represent the \textit{import/growth} and \textit{export/fragmentation}. While
these analogies (between the $a_{j},b_{j}$ of BD\ systems and EDG systems) are
appealing and acceptable to a certain extend, one should bear in mind that, in
the exchange systems the dynamics is so intertwined that $a_{j},b_{j}$ should
not be regarded too simplistically or being mere copies of coagulation and
fragmentation rates as in the BD system. Nevertheless, the arguments suggest
that the result should generalize which we state as a conjecture

\textbf{Conjecture.} \textit{Consider the EDG\ system (\ref{0-infode}%
)-(\ref{infIC}) system. Let the hypothesis of hypothesis of Theorem 1 be
satisfied. Assume for the kernel }$K(j,k)$\textit{ that it is non-decreasing
in the first component and non-increasing in the second component. Then the
solutions of (\ref{0-infode})-(\ref{infIC}) converge to equilibrium
exponentially fast in the sense of Theorem \ref{expo-conv}.}

For second class of kernels it is also shown that the monotonicity assumption
can be replaced with a bound condition on the total mass of the system in
which case one can show exponential convergence in the strong norm. We do not
know, if this requirement is only a technical assumption or a requirement as
in the case of first class $I$ kernels.

\textbf{Acknowledgement.} The first author thanks Colm Connaughton for many
fruitful discussions. He also thanks Miguel Escobedo for many useful comments
and hospitality during his one-month visit of University Pais Vasco in
November 2018. The first author is indebted to the hospitality of Hausdorff
Centre for Mathematics where this work was initiated March 2018. He also
thanks Andr\'{e} Schlichting for useful references where he came into contact
during a workshop at the University of Warwick in September 2018.

This work is supported by the Marie Curie Fellowship of European Commission,
grant agreement number: 705033. Also, the authors acknowledge support from the
Hausdorff Centre for Mathematics and CRC 1060 on Mathematics of emergent
effects from University of Bonn.

\end{document}